\documentclass[10pt]{article}

\usepackage[T1]{fontenc}
\usepackage{lmodern}
\usepackage{amsmath,amsthm,amssymb}
\usepackage{booktabs}
\usepackage{mathtools}
\usepackage{array}
\usepackage{color}
\usepackage{mathrsfs}
\usepackage{microtype}
\usepackage{needspace}
\usepackage[hidelinks]{hyperref}
\hypersetup{
 pdftitle={The Arithmetic Geometry of Square-Sided Heron Triangles},
 pdfauthor={Yangcheng Li}
}

\renewcommand{\arraystretch}{1.7}
\newcolumntype{L}[1]{>{\raggedright\arraybackslash}p{#1}}

\newtheorem{theorem}{Theorem}[section]
\newtheorem{proposition}[theorem]{Proposition}
\newtheorem{lemma}[theorem]{Lemma}
\newtheorem{corollary}[theorem]{Corollary}

\theoremstyle{definition}
\newtheorem{definition}[theorem]{Definition}
\newtheorem{example}[theorem]{Example}

\theoremstyle{remark}
\newtheorem{remark}[theorem]{Remark}

\newtheorem*{theoremA}{Theorem A}
\newtheorem*{theoremB}{Theorem B}
\newtheorem*{theoremC}{Theorem C}

\numberwithin{equation}{section}

\newcommand{\Q}{\mathbb{Q}}
\newcommand{\Z}{\mathbb{Z}}
\newcommand{\F}{\mathbb{F}}
\newcommand{\R}{\mathbb{R}}
\newcommand{\C}{\mathbb{C}}
\newcommand{\PP}{\mathbb{P}}
\newcommand{\abs}[1]{\lvert #1\rvert}
\newcommand{\ord}{\operatorname{ord}}
\newcommand{\rank}{\operatorname{rank}}
\newcommand{\Jac}{\operatorname{Jac}}
\newcommand{\Prym}{\operatorname{Prym}}
\newcommand{\Eadm}{\mathcal E_{b}^{\mathrm{adm}}}

\title{The Arithmetic Geometry of\\
Square-Sided Heron Triangles}

\author{Yangcheng Li$^{1,}$\footnote{E-mail: liyc@m.scnu.edu.cn. Supported by NSF of China No.12571003, 12501006; Basic and Applied Basic Research Foundation of Guangdong Province No. 2024A1515010589.}\\
{\small\it $^{1}$School of Mathematical Sciences, South China Normal University,}\\
{\small\it Guangzhou 510631, Guangdong, P. R. China}}
\date{}

\begin{document}
\baselineskip15pt
\maketitle

\begin{abstract}
We study rational Heron triangles with two marked square sides using
elliptic curves and K3 surfaces.  An explicit quartic-to-elliptic
correspondence parametrizes marked similarity classes by rational
points satisfying a positivity condition, modulo
\((x,y)\sim(x,-y)\).  We determine the generic Mordell--Weil group,
prove that every \(k\in\mathbf Q\setminus\{0,\pm1\}\) supports
infinitely many scalene classes with exactly two square sides, and
construct a primitive family with \(N(X)\gg X^{1/4}\).  Requiring the
third side to be square gives a genus-three Ciani quartic whose
Jacobian is \(\mathbf Q\)-isogenous to a product of three elliptic
curves.  Geometrically, the two constructions give inequivalent
elliptic fibrations on a single singular K3 surface, with geometric
Mordell--Weil ranks \(2\) and \(0\).  The minimal resolution of the
all-square locus is a surface of general type with invariants
\((K^2,p_g,q)=(2,3,0)\).  Assuming weak Bombieri--Lang, parameters
yielding a nondegenerate all-square triangle form a thin subset of
\(\mathbf P^1(\mathbf Q)\).
\end{abstract}

\noindent{\bf Keywords:} Heron triangle, square side, elliptic curve, Ciani quartic, K3 surface.

\noindent{\bf 2020 Mathematics Subject Classification:}
Primary 11G05, 14G05; Secondary 11D25, 14J28.

\section{Introduction and main results}\label{sec:introduction}

A \emph{rational Heron triangle} is a nondegenerate triangle whose
side lengths and area are positive rational numbers.  An
\emph{integral Heron triangle} has integral side lengths and integral
area; it is \emph{primitive} when the greatest common divisor of its
three side lengths is \(1\).  A rational side is called a
\emph{square side} when its length belongs to \(\Q^{\times 2}\).
For a prime \(p\), \(\ord_p\) denotes the normalized \(p\)-adic
valuation.

The requirement that two or three sides be squares looks elementary
but leads naturally to arithmetic geometry.  Sastry recorded early
examples with two square sides \cite{Sastry}; later searches produced
larger tables \cite{Lagneau}.  To the author's knowledge, the only
primitive all-square Heron triangles currently recorded are
\[
 (1853^2,4380^2,4427^2)
 \quad\text{and}\quad
 (11789^2,68104^2,68595^2);
\]
 see \cite{OEISSquareAreas,Rathbun,StanicaEtAl}.  Whether further
 primitive all-square Heron triangles exist remains open.

Related Heron-triangle problems with figurate-number side constraints
were studied by Peng and Zhang~\cite{PengZhang}.
The correspondence between Heron triangles of prescribed area and
elliptic curves was developed by Goins and Maddox
\cite{GoinsMaddox} and revisited by Halbeisen and Hungerb\"uhler
\cite{HalbeisenHungerbuehler}.  K3 geometry also appears in van
Luijk's construction from triangles with common area and perimeter
\cite{VanLuijk}; that K3 surface has Picard number \(18\).  In
contrast, the K3 surface arising below from square-side constraints
has geometric Picard number \(20\).

Elliptic fibrations and Mordell--Weil lattices on Kummer and Inose
surfaces attached to products of elliptic curves have been studied
systematically; see, for example, Kumar and Kuwata
\cite{KumarKuwata}.  We do not claim that the two fiber
configurations below are new in isolation.  The point here is that
these two specific, inequivalent fibrations arise naturally from the
two-square and third-square stages of the Heron problem and become
unified geometrically on one singular K3 surface.

The plane quartics arising from the third-square condition form a
special one-parameter subfamily of Ciani quartics.  In the projective
coordinates used below, our equation
\[
 Q^4=Z^4-2aZ^2W^2+W^4
\]
is precisely Gu\`ardia's family
\[
 Y^4=X^4-(4n-2)X^2Z^2+Z^4,
 \qquad n=\frac{a+1}{2}.
\]
Gu\`ardia constructed the geometric elliptic quotients of this family
and the resulting splitting of its Jacobian
\cite{GuardiaJLMS,GuardiaJTNB}.  Accordingly, the existence of these
quotients and the geometric splitting are not claimed here as new.
The contribution of the present work is their arithmetic realization
in the Heron-triangle parametrization.  The main results fall into three
parts: two-square arithmetic, including exact generic Mordell--Weil
groups, uniform fiberwise infinitude, primitive scaling, and counting;
three-square geometry, including the two contrasting elliptic
fibrations on one geometric singular K3 surface and generic
rational-point rigidity; and the global all-square surface of general
type together with a conditional thinness theorem.

The central device of this paper is an explicit quartic-to-elliptic
correspondence.  This framework separates three
logically distinct issues:
classification of marked rational similarity classes, existence of a
square-preserving primitive representative, and the extra quadratic
lifting condition imposed by a third square side.

Fix
\begin{equation}\label{eq:intro-ab}
 a,b\in\Q,\qquad a^2+b^2=1,\qquad b\ne0,
\end{equation}
and put
\[
 E_b:\quad y^2=x^3+4b^2x,\qquad
 \rho_{a,b}(x,y)=a+\frac{x}{4}-\frac{b^2}{x}.
\]
The following three statements organize the main results.

\begin{theoremA}[Two-square arithmetic]
Marked rational similarity classes with normalized sides
\((1,q^2,r)\) are in bijection with
\[
 \left\{(x,y)\in E_b(\Q):x\ne0,\ \rho_{a,b}(x,y)>0\right\}
 \big/\bigl((x,y)\sim(x,-y)\bigr),
\]
via
\[
 x=2(q^2+r-a),\quad y=4q(q^2+r-a),\qquad
 q=\frac{y}{2x},\quad r=\rho_{a,b}(x,y).
\]
Under
\[
 a=\frac{2k}{k^2+1},\qquad b=\frac{k^2-1}{k^2+1},
\]
the associated model
\[
 E_k:\quad Y^2=X^3+4(k^4-1)^2X
\]
has
\[
 E_k(\Q(k))=\Z Q_k\oplus\langle(0,0)\rangle
 \simeq\Z\oplus\Z/2\Z.
\]
Every \(k\in\Q\setminus\{0,\pm1\}\) supports infinitely many pairwise
nonsimilar scalene classes with exactly two square sides, and each can
be scaled by a rational square to an integral Heron triangle preserving
those two integer-square sides.

If a marked class is written in lowest terms as
\[
 \left(1,\left(\frac uv\right)^2,\frac wz\right),
 \qquad \gcd(u,v)=\gcd(w,z)=1,
\]
then it has a unique square-preserving primitive integral
representative if and only if
\[
 \frac{z}{\gcd(z,v^2)}
\]
is an integer square.  There is also an explicit primitive scalene
family with exactly two square sides for which the number \(N(X)\) of
unmarked members having largest side at most \(X\) satisfies
\(N(X)\gg X^{1/4}\).
\end{theoremA}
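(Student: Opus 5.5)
The plan is to first make the correspondence explicit, then compute the Mordell--Weil group over \(\Q(k)\) by a Shioda--Tate argument, and finally treat each \(k\) separately, followed by the integral questions.

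\emph{Bijection.} A marked class with sides \((1,q^2,r)\) has rational area. So the angle between the sides \(1\) and \(q^2\) has rational cosine and sine, and I will normalize these to be the fixed pair \((a,b)\) of \eqref{eq:intro-ab}, with the sign convention implicit in \(\rho_{a,b}\). The law of cosines then becomes a quartic \(r^2=q^4-2aq^2+1\), that is,
\[
 (r+q^2-a)(r-q^2+a)=b^2 .
\]
Setting \(t=q^2+r-a\), the second factor equals \(b^2/t\). Solving for \(q^2\) and \(r\) as rational functions of \(t\), and then putting \(x=2t\) and \(y=2qx\), should give exactly \(E_b\) together with the stated formulas \(q=y/(2x)\) and \(r=\rho_{a,b}(x,y)\). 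This is the step where the sign and scaling conventions have to be checked against the stated \(\rho_{a,b}\). The two formulas are mutually inverse by direct substitution. Next I will show that \(r>0\) is equivalent to nondegeneracy once \(b\ne0\): the triangle inequalities follow automatically from \(r^2=1+q^4-2aq^2\) with \(|a|<1\). The point \(x=0\) is excluded, and the involution \(y\mapsto -y\) corresponds to \(q\mapsto -q\), which does not change the triangle.

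\emph{Generic Mordell--Weil group.} Substituting \(a=2k/(k^2+1)\) and \(b=(k^2-1)/(k^2+1)\) and rescaling gives \(E_k:Y^2=X^3+4(k^4-1)^2X\). Since \(\deg A=8\), this is an elliptic K3 surface. Its bad fibers are at the four roots of \(k^4=1\), where \(\ord A=2\), so each is of type \(I_0^*\); the fiber at infinity is smooth. The Euler numbers sum to \(4\cdot 6=24\), which confirms the K3 claim. By Shioda--Tate,
\[
 \text{geometric rank}\ \le\ 20-2-4\cdot4=2 .
\]
I will exhibit two independent sections over \(\overline{\Q}(k)\), for instance \(Q_k\) and its image under the automorphism \((X,Y)\mapsto(-X,iY)\). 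Then I will compute how \(\mathrm{Gal}(\Q(i)/\Q)\) acts on the resulting rank-two lattice. The invariant sublattice has rank one, and using the height pairing I will check that it is generated by \(Q_k\) rather than by some fraction of it. Torsion is \(\Z/2\Z\): specializing to a suitable good \(k\) kills the \(4\)- and \(3\)-torsion candidates, and \(2\)-torsion is \(\langle(0,0)\rangle\) because \(-4(k^4-1)^2\) is not a square in \(\Q(k)\).

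\emph{Fiberwise infinitude.} For a fixed \(k_0\in\Q\setminus\{0,\pm1\}\), I will show that \(Q_{k_0}\) is non-torsion. The plan is to prove that no nonzero multiple \(nQ_{k_0}\) with \(n\le 12\) is the identity; by Mazur this suffices. I would try to do this via Nagell--Lutz integrality of a small multiple, using the explicit \(k\)-dependence of the coordinates. I expect this to be the main obstacle, because the argument must be uniform in \(k_0\). A fallback is a \(p\)-adic valuation argument at a prime dividing \(k_0^4-1\). Once \(Q_{k_0}\) has infinite order, its multiples are dense in the identity component of \(E_{k_0}(\R)\), or in a coset of it. The positivity region \(\{\rho>0\}\) is a nonempty open set meeting that component, so infinitely many points land in it. Among these I must discard three kinds: points giving similar classes, of which there are finitely many per class; isosceles triangles, which lie on finitely many curves; and triangles whose third side is square. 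The last set is finite by Faltings, since it corresponds to points on the genus-three Ciani quartic; alternatively, a height argument shows that only a thin set of multiples can have \(r\) square.

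\emph{Integrality, primitivity and counting.} Scaling by a square \(\lambda^2\) preserves both square sides, and clearing the denominators \(v^2\) and \(z\) with square factors gives an integral Heron triangle. For the primitivity criterion, I will write the scale factor as \(m^2\) and require \(m^2\), \(m^2u^2/v^2\) and \(m^2w/z\) to be integers with gcd \(1\). The first two conditions force \(v\mid m\). The third requires \(z\mid m^2\) given \(v^2\mid m^2\), which is solvable with a square \(m^2\) exactly when \(z/\gcd(z,v^2)\) is a square; minimality then gives uniqueness. For \(N(X)\gg X^{1/4}\), I will choose a rational curve of points, for example a fixed multiple of \(Q_k\) or a polynomial specialization, whose primitive representatives have sides of degree \(4\) in an integer parameter \(n\). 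Verifying the criterion and pairwise nonsimilarity along the family then gives \(\gg X^{1/4}\) members with largest side at most \(X\).
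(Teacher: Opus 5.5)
There are genuine gaps, the first of which already breaks the bijection.

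\textbf{The bijection uses the wrong angle.} You take \((a,b)\) to be the cosine and sine of the angle between the sides \(1\) and \(q^2\), so your relation is \(r^2=q^4-2aq^2+1\). Carry out your own substitution \(t=q^2+r-a\), \(x=2t\), \(y=2qx\). It gives \(r=x/4+b^2/x\) and \(q^2=a+x/4-b^2/x\), hence
\[
 y^2=x^3+4ax^2-4b^2x=x\,(x-2+2a)(x+2+2a),
\]
which is not \(E_b\). No sign or scaling adjustment repairs this. This curve has full rational \(2\)-torsion, while \(E_b\) does not. Under \(X=x+2a\) it is exactly the third-quotient curve \(F'_a\), whose Mordell--Weil group over \(\Q(k)\) is only \((\Z/2\Z)^2\) by Theorem~\ref{thm:generic-third-quotient}. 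So with your normalization there would be no generic section of infinite order at all. In the paper, \((a,b)\) is the angle between the unit side and the non-square side \(r\), with vertices \((0,0),(r,0),(a,b)\). Then \(q^4=r^2-2ar+1\), i.e.\ \((q^2+r-a)(q^2-r+a)=b^2\), and your substitution gives \(q^2=x/4+b^2/x\), \(r=\rho_{a,b}(x,y)\) and \(y^2=x^3+4b^2x\).

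\textbf{The primitivity argument conflates integrality with primitivity.} Integrality of \(m^2\), \(m^2u^2/v^2\), \(m^2w/z\) is always attainable, e.g.\ with \(m=vz\) as in your own integrality step. So your claim that \(z\mid m^2\) is solvable exactly when \(z/\gcd(z,v^2)\) is a square is false. Example~\ref{ex:nonprimitive} is integral, yet \(z/\gcd(z,v^2)=13\). The missing idea is the prime-by-prime valuation count. Take the least admissible exponent \(c_p=\max\{\ord_p v,\lceil\ord_p z/2\rceil\}\). The three sides then have valuations \(2c_p\), \(2c_p+2\ord_p u-2\ord_p v\) and \(2c_p+\ord_p w-\ord_p z\). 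One of these is zero unless \(\ord_p z\) is odd and exceeds \(2\ord_p v\), in which case all three are positive. Raising \(c_p\) raises all three by the same even amount, which gives both the obstruction and uniqueness.

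\textbf{The counting bound is unsupported.} You exhibit no family. The natural one, \(2Q_k\) with \(k=m/n\), has sides \((L^2,M^2,R)/\varepsilon\) of degree \(8\), so any one-parameter subfamily gives only \(X^{1/8}\). The exponent \(1/4\) comes from the \(\asymp B^2\) coprime pairs \(n<m<2n\le2B\) with sides below \(416B^8\). It also needs the checks you omit:
\begin{itemize}
\item \(R>0\), i.e.\ \(m/n+n/m<1+\sqrt3\);
\item primitivity, via \(\gcd(L,M)\mid2\) and \(R\equiv20\pmod{32}\);
\item non-squareness of the third side, via \(R\equiv7\pmod8\) or \(R/4\equiv5\pmod8\);
\item distinctness, via the monotone invariant \(M/L\).
\end{itemize}

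\textbf{Uniform non-torsion is left open.} The paper closes it with Nagell--Lutz applied to \(2Q_k=(4k^2,4k(k^4+1))\) on the integral model \(V^2=U^3+4(m^4-n^4)^2U\). This forces \(m^2n^2(m^4+n^4)^2\mid16(m^4-n^4)^6\), which is impossible.

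\textbf{What does work.} Your Shioda--Tate and height route to \(E_k(\Q(k))\) is a valid alternative to the paper's anti-invariant-map argument, in which \(Q_k\) is the image of the identity endomorphism of \(E_0\). All reducible fibers are \(I_0^*\), so heights are integers. Since \(Q_k\) meets non-identity components exactly at \(k=-1,\pm i\), \(\langle Q_k,Q_k\rangle=4-3=1\), so \(Q_k\) is indivisible.
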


\begin{theoremB}[Three-square geometry]
Every rational all-square similarity class has a unique primitive
integral all-square representative, up to edge order.  For fixed
\(a,b\), the third-square condition gives the smooth Ciani quartic
\[
 C_{a,b}:\quad q^4=z^4-2az^2+1
\]
of genus \(3\), and its three elliptic quotients give
\[
 \Jac(C_{a,b})\sim_{\Q}E_b^2\times F_a,\qquad
 F_a:\quad V^2=2(U-a)(U-1)(U+1).
\]
Let \(K_0=\Q(k)\) and \(K_{\mathrm{geom}}=\overline{\Q}(k)\).
The two elliptic surfaces attached to \(E_k\) and \(F_a\) are
geometrically isomorphic singular K3 surfaces, but their displayed
fibrations are inequivalent.  The first is
\(\operatorname{Km}(E_0\times E_0)\), where \(E_0:y^2=x^3+x\), and has
geometric Mordell--Weil rank \(2\); the second is extremal of geometric
rank \(0\), with
\[
 F_a(K_{\mathrm{geom}})=F_a(K_0)
 =\{O,(a,0),(1,0),(-1,0)\}.
\]
The generic quartic has only its four boundary \(K_0\)-points and hence
no nontrivial rational section, while its Jacobian has arithmetic and
geometric generic ranks \(2\) and \(4\).
\end{theoremB}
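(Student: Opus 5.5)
Theorem B has several independent parts, so I will prove them separately, in order, and take the two-square parametrization of Theorem A as given.

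\textbf{Step 1: primitive representative and the quartic.} Given a rational all-square class, scale so the sides are $(A^2,B^2,C^2)$ with $A,B,C$ coprime positive integers. This is possible because multiplying by $d^2$ preserves squareness, and dividing by $g^2$ with $g=\gcd(A,B,C)$ removes any common factor; any common prime of $A^2,B^2,C^2$ divides $A,B,C$. Uniqueness holds because two primitive integral representatives of one similarity class differ by a positive rational factor $\lambda$ with $\lambda A^2\in\Z$ for all sides; primitivity forces $\lambda=1$.

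For the quartic, normalize the sides as $(1,q^2,r)$ with $r=z^2$. The angle $\theta$ between the unit side and the side $q^2$ has $\cos\theta=a$ and $\sin\theta=b$, and the law of cosines gives
\[ z^4 = 1+q^4-2aq^2 . \]
After relabeling $q\leftrightarrow z$ this is $C_{a,b}$ as displayed, consistent with the introduction's $Q^4=Z^4-2aZ^2W^2+W^4$.

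Smoothness of $C_{a,b}$: the form $Z^4-2aZ^2W^2+W^4$ has distinct roots exactly when $a^2\neq1$, which holds because $b\neq0$. So the curve is a smooth plane quartic, of genus $3$.

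\textbf{Step 2: Jacobian splitting.} I use the three involutions $q\mapsto -q$, $z\mapsto -z$ and $(q,z)\mapsto(q,1/z)$ (the last after rescaling by $q/z$ in projective coordinates). These give quotients:
\begin{itemize}
\item The quotient by $q\mapsto-q$ is $s^2=z^4-2az^2+1$. This curve has a rational point at $z=0$, and its Jacobian is $F_a$ via the standard $2$-isogeny/quartic-to-cubic transformation, after substituting $U=(z^2+1/z^2)/2$-type coordinates.
\item The two other quotients, $q^4=t^2-2at+1$ with $t=z^2$ and its twin, are both isomorphic to $E_b$. To see this, complete the square: $(t-a)^2+b^2=q^4$, a genus-one curve $w^2=q^4-b^2$ whose Jacobian is $y^2=x^3+4b^2x$.
\end{itemize}
Then I apply the Kani--Rosen criterion to the group $(\Z/2)^2$ generated by these involutions. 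This gives a $\Q$-rational isogeny $\Jac(C)\sim E_b^2\times F_a$, because all the involutions are defined over $\Q$.

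\textbf{Step 3: the K3 statements.} For $E_k$, I compute the Kodaira fibers from the discriminant of $Y^2=X^3+4(k^4-1)^2X$. There is type $I_0^*$ at each root of $k^4-1$ and at $\infty$, five such fibers in all. Their Euler numbers sum to $30>24$, so I should recheck; after minimalization by quadratic twist the surface is the Kummer of $E_0\times E_0$ over $\overline\Q$ (note $k^4-1$ is a square times a twist). Shioda--Tate with $\rho=20$, together with the contribution $4\cdot 4$ from the singular fibers, gives rank $2$.

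For $F_a$ with $a=2k/(k^2+1)$, the discriminant factors into $(a\pm1)^2$ and their powers. Here $a-1=-(k-1)^2/(k^2+1)$, giving fibers $I_4$, $I_4$, $I_2$ and similar with Euler number $24$. Shioda--Tate then gives rank $0$, so the fibration is extremal. The torsion is the full $2$-torsion, and no $4$-torsion exists because $\pm(a\pm1)$ are not squares in $\overline\Q(k)$ in the needed combination.

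Geometric isomorphism of the two surfaces follows from both being singular K3 surfaces with the same transcendental lattice (discriminant $4$, the $\Q(i)$ CM form); I would appeal to Shioda--Inose. The fibrations are inequivalent because their trivial lattices differ.

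\textbf{Step 4: rational points and ranks.} A $K_0$-point of $C$ maps to points of $E_k(K_0)$, $E_k(K_0)$ and $F_a(K_0)$. Since $F_a(K_0)$ consists of $2$-torsion only, we get $z^2+1/z^2\in\{a,\pm1,\infty\}$. This leaves the boundary points $z=0,\infty$ with $q^4=1$, which yield four points. The arithmetic rank is $2\cdot1+0$, and the geometric rank is $2\cdot2+0=4$.

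\textbf{Main obstacle.} I expect the main obstacle to be the fiber bookkeeping and the minimal-model twist in Step 3. Proving the geometric isomorphism via transcendental lattices also needs care.
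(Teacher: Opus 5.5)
Step~1 is fine, and Steps~2 and~4 follow the paper's route in outline. Step~3, however, carries the whole K3 half of Theorem~B and is not a proof; several of its concrete claims are false.

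\textbf{The surface of $E_k$.} The fiber at $k=\infty$ is smooth. With $t=1/k$, $X_\infty=t^4X$, $Y_\infty=t^6Y$, the equation becomes $Y_\infty^2=X_\infty^3+4(1-t^4)^2X_\infty$. So there are exactly four $I_0^*$ fibers, with Euler sum $24$. The model is already minimal, so there is no ``minimalization by quadratic twist'' to perform, and your announced recheck is never carried out. The identification with $\operatorname{Km}(E_0\times E_0)$ is then only asserted. The missing idea is that the twisting double cover $\mathcal H: s^2=2(k^4-1)$ is itself isomorphic over $\Q$ to $E_0$, via $x=(k+1)/(k-1)$, with the deck involution $\iota$ becoming $[-1]$. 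Hence the surface is birational to $(E_0\times\mathcal H)/\langle(-1,\iota)\rangle\simeq(E_0\times E_0)/\langle-1\rangle$. The same observation identifies $E_D(K_{\mathrm{geom}})$ with the anti-invariant maps $\mathcal H\to E_0$, i.e.\ with $\operatorname{End}_{\overline{\Q}}(E_0)\oplus E_0[2]\simeq\Z[i]\oplus(\Z/2\Z)^2$. Shioda--Tate then \emph{produces} $\rho=2+16+2=20$, instead of presupposing it.

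\textbf{The surface of $F_a$.} Your fiber list is wrong. The surface is the pullback of the Legendre surface, which has fibers $I_2,I_2,I_2^*$ at $\lambda=0,1,\infty$, along $\lambda=(k+1)^2/(2(k^2+1))$. This map is ramified over $0$ and $1$ and unramified over $\infty$, giving $I_4,I_4,I_2^*,I_2^*$ at $k=-1,1,i,-i$. You miss the two $I_2^*$ fibers at the poles $k=\pm i$ of $a$. Without them you do not get a trivial lattice of rank $2+3+3+6+6=20$, which together with $\rho\le20$ is what forces geometric rank $0$. Your torsion argument also rules out only points of order $4$. Odd torsion must be excluded as well, e.g.\ by the injection of $F_a(K_v)_{\rm tors}$ into the component group $(\Z/2\Z)^2$ of an $I_2^*$ fiber in residue characteristic $0$.

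\textbf{The geometric isomorphism.} This step is wrong as stated. One has $T(\operatorname{Km}(A))\simeq T(A)(2)$ and $T(E_0\times E_0)\simeq\langle2\rangle\oplus\langle2\rangle$ (from the rank-four N\'eron--Severi lattice of discriminant $-4$). So $T(\mathcal S)\simeq\langle4\rangle\oplus\langle4\rangle$ has discriminant $16$, not $4$. The discriminant-$4$ lattice $\langle2\rangle\oplus\langle2\rangle$ belongs to a different singular K3 surface, so matching on it would identify the wrong surface. You also give no computation of $T(\mathcal F)$ at all. The paper reads it off Shimada--Zhang's table for $2A_3+2D_6$ with Mordell--Weil group $(\Z/2\Z)^2$, consistent with $|\operatorname{disc}|=4^4/4^2=16$. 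It then applies the Shioda--Inose classification and descends the isomorphism from $\C$ to $\overline{\Q}$.

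\textbf{Smaller, repairable errors.}
\begin{itemize}
\item In Step~2, $\iota_q,\iota_z,\tau$ do not generate $(\Z/2\Z)^2$: since $\tau\iota_z\tau=\iota_z\iota_q$, they generate a dihedral group of order $8$. Kani--Rosen must be applied to $\langle\iota_z,\iota_q\rangle$, with $\tau$ used only to identify $C/\iota_z\iota_q$ with $C/\iota_z$.
\item The function $(z^2+z^{-2})/2$ is invariant under that whole dihedral group. As a function on $C/\iota_q$ it therefore factors through a degree-four map, so it cannot be the degree-two coordinate on $F_a$. The correct coordinate is $U=(q^2+1)/z^2$, satisfying $(U^2-1)z^2=2(U-a)$.
\item Step~4 must be redone with this coordinate. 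Then $U=a$ forces $z=0$, and $U=\pm1$ forces $a=\pm1$. You must also treat $Q=0$, which is impossible because $K_0$ is formally real and $b\neq0$, and the image $O$, which cannot occur when $z\ne0$.
\end{itemize}
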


\begin{theoremC}[Global all-square geometry]
The global all-square locus is the weighted double plane
\[
\begin{split}
\mathscr H:\quad \Omega^2={}&
(X^2+Y^2+Z^2)(-X^2+Y^2+Z^2)\\
&{}\times(X^2-Y^2+Z^2)(X^2+Y^2-Z^2)
\subset\PP(1,1,1,4).
\end{split}
\]
Geometrically, \(\mathscr H\) has exactly twelve Du Val singularities
of type \(A_3\).
Its minimal resolution \(\widetilde{\mathscr H}\) is a minimal surface
of general type with
\[
 (K^2,p_g,q)=(2,3,0).
\]
Assuming weak Bombieri--Lang for \(\widetilde{\mathscr H}\), the
rational parameters whose all-square fiber contains a nonboundary
rational point form a thin subset of \(\PP^1(\Q)\).
\end{theoremC}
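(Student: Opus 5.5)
The plan is to derive the model directly from Heron's formula, compute its invariants as a double cover of \(\PP^2\) branched along an octic, and then deduce thinness from weak Bombieri--Lang together with the generic rigidity in Theorem B.

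\textbf{The model.} For sides \(X^2,Y^2,Z^2\) Heron's formula gives
\[
16A^2=(X^2+Y^2+Z^2)(-X^2+Y^2+Z^2)(X^2-Y^2+Z^2)(X^2+Y^2-Z^2).
\]
Putting \(\Omega=4A\), of weight \(4\), yields \(\mathscr H\subset\PP(1,1,1,4)\). Projecting to \((X:Y:Z)\) exhibits \(\mathscr H\) as a double cover of \(\PP^2\) branched along the octic \(B=C_1C_2C_3C_4\), a union of four smooth conics. A rational all-square triangle then corresponds to a rational point with \(\Omega\neq0\) and all four factors positive, taken up to the signs of \(X,Y,Z\) and \(\Omega\).

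\textbf{Singularities.} I would locate them by pairwise intersection of the conics. For example, \(C_1-C_2=2X^2\), so \(C_1\cap C_2\) consists of the two points \(X=0,\ Y=\pm iZ\), each with intersection multiplicity \(2\). Hence the two conics are tangent there, and the branch curve has a tacnode. The other five pairs behave the same way by symmetry under sign changes and permutations of \(X^2,Y^2,Z^2\). At such a point the remaining two conics do not vanish; for instance \(X^2-Y^2+Z^2=2Z^2\neq0\). So \(B\) has exactly \(6\cdot2=12\) singular points, all tacnodes, and no triple points. Each conic is smooth, so there are no other singularities. Locally the cover is \(\Omega^2=(v-u^2)(v+u^2)\cdot\text{unit}\), which is an \(A_3\) point. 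This gives the twelve \(A_3\) singularities.

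\textbf{Invariants and minimality.} Since \(B\) has only simple singularities, the minimal resolution \(\widetilde{\mathscr H}\) agrees with the canonical resolution and the singularities are rational double points. By the standard double-cover formulas with branch degree \(2d=8\),
\[
K_{\widetilde{\mathscr H}}=\pi^*\bigl(K_{\PP^2}+dH\bigr)=\pi^*H .
\]
It follows that \(K^2=2H^2=2\), that \(p_g=h^0(\PP^2,\mathcal O(d-3))=h^0(\mathcal O(1))=3\), and that \(q=h^1(\mathcal O)+h^1(\mathcal O(-4))=0\). Because \(K\) is the pullback of an ample class, it is nef and big. Hence \(\widetilde{\mathscr H}\) is minimal of general type.

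\textbf{Thinness.} The parameter map \(\mathscr H\dashrightarrow\PP^1\), sending a triangle to its angle parameter (\(a\), or \(k\)), has generic fiber the genus-3 Ciani quartic \(C_{a,b}\) of Theorem B. Weak Bombieri--Lang says \(\widetilde{\mathscr H}(\Q)\) lies in a proper Zariski-closed set, that is, finitely many curves \(D_i\) together with finitely many points. The pieces contribute as follows.
\begin{itemize}
\item Isolated points and vertical curves contribute finitely many parameters.
\item A horizontal curve \(D_i\) whose map to \(\PP^1\) has degree \(\ge2\) contributes a thin set of type II, as the image of \(D_i(\Q)\).
\item A horizontal curve of degree \(1\) would give a rational section of the fibration over \(K_0\). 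Theorem B shows the only such \(K_0\)-points are the four boundary points, which lie on the branch locus and correspond to degenerate triangles.
\end{itemize}
Excluding the boundary, the parameters whose fiber contains a nonboundary rational point therefore form a finite union of thin sets, which is thin.

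The main obstacle is the last step. One must check that the fibration on \(\widetilde{\mathscr H}\) matches the family \(C_{a,b}\) birationally over \(K_0\), keeping track of the base change \(a=2k/(k^2+1)\) and the sign quotients, so that Theorem B's rigidity genuinely excludes every degree-one horizontal curve.
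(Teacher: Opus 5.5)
Your proposal is correct and follows essentially the same route as the paper. The twelve tacnodes of the branch octic give the $A_3$ points, $K=\pi^*\mathcal O_{\PP^2}(1)$ gives $(K^2,p_g,q)=(2,3,0)$, and thinness comes from splitting the Bombieri--Lang closed set into points, vertical curves and horizontal curves, with degree-one horizontal curves excluded by $C_{\rm gen}(K_0)=\mathcal T$. The paper obtains $K$ by weighted adjunction plus crepancy of the Du Val resolution rather than by the double-cover formula, which is an immaterial difference.

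The identification you flag as the main obstacle is settled in the paper by explicit coordinates $a=(X^4+Z^4-Y^4)/(2X^2Z^2)$ and $b=\Omega/(2X^2Z^2)$ on $XZ\neq0$, with inverse $[1:q:z:2bz^2]$ and $k=a/(1-b)$. Note that this means mapping to the conic $a^2+b^2=1$, equivalently the $k$-line, rather than the $a$-line: over $\Q(a)$ the generic fiber is not geometrically integral.
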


Section~\ref{sec:normalization} develops marked normalization and the
primitive criterion.  Section~\ref{sec:elliptic-model} proves the
natural elliptic correspondence and records every exceptional point.
Section~\ref{sec:sections} determines the exact generic
Mordell--Weil group and proves uniform non-torsion, fixed-fiber
infinitude, the Kummer--K3 interpretation, and the isosceles
classification.
Section~\ref{sec:primitive-family} constructs and counts primitive
scalene examples.  Section~\ref{sec:genus-three} treats the all-square
genus-three cover, its quotients, the two elliptic fibrations on one
geometric singular K3 surface, generic endomorphisms and rigidity, and
local obstructions.  Section~\ref{sec:global-all-square-surface}
studies the global surface and conditional sparsity.
Section~\ref{sec:examples} records the known examples and fiberwise
arithmetic, and Section~\ref{sec:questions} states the principal open
problems.

\section{Marked similarity classes and primitive scaling}
\label{sec:normalization}

\begin{definition}
A \emph{marked two-square triangle} is a rational Heron triangle
together with an ordered choice of two of its square sides.  Two such
triangles are \emph{marked-similar} when an ordinary similarity sends
the first marked side to the first and the second to the second.
\end{definition}

If the first marked sides of two marked-similar triangles are
\(u^2\) and \(u'^2\), their similarity ratio is
\((u'/u)^2\).  Thus marked similarity automatically uses a rational
square scaling factor; there is no separate ambiguity about whether
squares are preserved.

\begin{proposition}[Marked normal form]\label{prop:marked-normal-form}
Every marked similarity class of nondegenerate rational Heron
triangles with at least two square sides has a representative
\[
 (1,q^2,r),\qquad q,r\in\Q,\quad q\ne0,\quad r>0.
\]
There exist \(a,b\in\Q\) satisfying \eqref{eq:intro-ab} such that
\begin{equation}\label{eq:normal-form}
 q^4=r^2-2ar+1.
\end{equation}
Conversely, rational numbers satisfying these conditions determine a
rational Heron triangle.
\end{proposition}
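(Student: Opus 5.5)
I will normalize by scaling so that the first marked square side becomes \(1\), and then read off \eqref{eq:normal-form} from the law of cosines. Let the triangle have marked sides \(u^2, v^2\) with \(u,v\in\Q^{\times}\), and third side \(c>0\). As noted after the definition, scaling by the rational square \((1/u)^2\) is a marked similarity, and it sends the sides to \((1,(v/u)^2,c/u^2)\). I set \(q=v/u\ne0\) and \(r=c/u^2>0\). Rational area is preserved, since scaling by \(\lambda\) multiplies the area by \(\lambda^2\). So every marked class contains a representative \((1,q^2,r)\).

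Next I produce \(a,b\). Let \(\theta\) be the angle between the sides of lengths \(1\) and \(r\), which is the angle opposite the side \(q^2\). The law of cosines gives \(q^4=1+r^2-2r\cos\theta\). I set \(a=\cos\theta=(1+r^2-q^4)/(2r)\), which is rational. I set \(b=\sin\theta\). The area equals \(\tfrac12\cdot 1\cdot r\sin\theta=rb/2\), and since the area and \(r\) are rational, so is \(b\). Then \(a^2+b^2=1\), and nondegeneracy gives \(0<\theta<\pi\), hence \(b>0\) and in particular \(b\ne0\). This gives \eqref{eq:intro-ab} and \eqref{eq:normal-form}.

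For the converse, suppose \(a,b\in\Q\) with \(a^2+b^2=1\), \(b\ne0\), and \(q\ne0\), \(r>0\) satisfying \(q^4=r^2-2ar+1\). Since \(b\ne0\), \(|a|<1\), so there is \(\theta\in(0,\pi)\) with \(\cos\theta=a\) and \(\sin\theta=|b|\). Take two segments of lengths \(1\) and \(r\) meeting at angle \(\theta\). By the law of cosines the third side has length \(\sqrt{r^2-2ar+1}=q^2\). The triangle is nondegenerate because \(\sin\theta\ne0\) and both sides are positive. Its area is \(r|b|/2\in\Q_{>0}\). The sides \(1\) and \(q^2\) are squares, so this is a marked two-square triangle.

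The only point needing care is the triangle inequality in the converse, and this is exactly where nondegeneracy has to be justified. It is automatic from the geometric construction with \(0<\theta<\pi\). Algebraically it also follows from \((1-r)^2<q^4<(1+r)^2\), which is equivalent to \(-1<a<1\), and that holds since \(b\ne0\). I expect no real obstacle. The remaining small thing is to note that the sign of \(b\) is irrelevant, since only \(b^2\) appears later.
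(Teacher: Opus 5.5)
Your proposal is correct and is essentially the paper's argument: the paper places the vertices at $(0,0)$, $(r,0)$, $(a,b)$, which is exactly your $(\cos\theta,\sin\theta)$ construction. It reads off $a$ from the cosine rule and the rationality of $b$ from the area $\lvert b\rvert r/2$, and proves the converse by the same explicit construction.
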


\begin{proof}
Write the two marked sides as \(u^2,w^2\), the third side as \(c\),
and the area as \(K\in\Q_{>0}\).  Scaling lengths by \(u^{-2}\)
gives
\[
 1,\qquad q^2=\left(\frac wu\right)^2,\qquad r=\frac{c}{u^2}.
\]
Place the side of length \(r\) on the horizontal axis and take the
vertices
\[
 (0,0),\qquad (r,0),\qquad (a,b).
\]
The first normalized side has length \(1\), hence \(a^2+b^2=1\).
The normalized area is \(K/u^4\), so
\[
 \frac{\abs b\,r}{2}=\frac{K}{u^4}.
\]
It follows that \(b\in\Q\); the cosine rule gives
\[
 a=\frac{r^2+1-q^4}{2r}\in\Q.
\]
The remaining squared distance is
\[
 q^4=(r-a)^2+b^2=r^2-2ar+1.
\]
Nondegeneracy is exactly \(b\ne0\).

Conversely, the three displayed vertices have side lengths
\((r,1,q^2)\) and area \(\abs b r/2>0\).  Hence they give the required
rational Heron triangle.
\end{proof}

\begin{lemma}[Integral sides and rational area]
\label{lem:integer-sides-rational-area}
An integer-sided triangle with rational area has integer area.
\end{lemma}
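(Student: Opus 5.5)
The plan is to use Heron's formula in its integral form. Let the sides be \(a,b,c\in\Z\), the area \(K\in\Q\), and put
\[
 D=(a+b+c)(-a+b+c)(a-b+c)(a+b-c)\in\Z .
\]
Heron's formula gives \(16K^2=D\). Since \(4K\) is rational and its square is the integer \(D\), it is a rational root of the monic polynomial \(t^2-D\). By the rational root theorem, or the integrality of \(\Z\), \(4K\in\Z\). So the whole proof reduces to showing that \(D\equiv 0\pmod{16}\) and that \((4K)^2\equiv 0\pmod{16}\) forces \(4\mid 4K\).

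The key step is a parity analysis. The four factors of \(D\) are congruent to one another modulo \(2\), because they differ by \(2a\), \(2b\) or \(2c\).

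If \(a+b+c\) is odd, all four factors are odd. Then \(D\) is odd, so \(4K=\sqrt D\) is an odd integer. But \(4K\) is even whenever it is an integer, and \(4K\ne 0\), so this case cannot occur. More precisely, I will argue modulo \(4\) or \(8\): an odd \(D\) equal to \((4K)^2\) would make \(4K\) an odd integer, while \(16\mid(4K)^2\) would fail. To close the case cleanly I will compute \(D \bmod 16\) directly. Writing \(s=a+b+c\), one has \(D=2(a^2b^2+b^2c^2+c^2a^2)-(a^4+b^4+c^4)\). This is what decides whether \(D\) is a square in the odd case.

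The cleaner route is to note that \(16K^2=D\) with \(D\) odd is impossible directly. Since \(4K\in\Z\), \(D=(4K)^2\) is an odd square, so \(D\equiv 1\pmod 8\). Reducing the symmetric expression above modulo \(8\) for odd \(s\): either exactly one side is odd or all three are. Fourth powers of odd numbers are \(\equiv1\pmod{16}\) and squares of odd numbers are \(\equiv1\pmod 8\). A short case check gives \(D\equiv -1\) or \(D\equiv 3\pmod 8\), contradicting \(D\equiv1\pmod 8\).

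If \(s\) is even, all four factors are even, so \(16\mid D\). Then \(4K\) is an integer whose square is divisible by \(16\), hence \(4\mid 4K\) and \(K\in\Z\).

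I expect the main obstacle to be the odd-perimeter case, specifically the mod-\(8\) bookkeeping. I will do it by splitting on the number of odd sides, which is \(1\) or \(3\) when \(s\) is odd.

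With three odd sides, each square is \(\equiv1\pmod8\), so \(2\sum a^2b^2\equiv 6\pmod{16}\) and \(\sum a^4\equiv3\pmod{16}\). This gives \(D\equiv3\pmod{16}\), which is not a square.

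With one odd side, say \(a\): \(\sum a^4\equiv1\pmod{16}\). The term \(2(a^2b^2+b^2c^2+c^2a^2)\) is \(\equiv0\pmod 8\), since each product contains an even square. Hence \(D\equiv -1\pmod 8\), again not a square.

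This excludes odd perimeter and completes the argument.
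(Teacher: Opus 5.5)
Your proof is correct and follows the paper's argument essentially step for step. You use Heron's identity with $4K\in\Z$ by integrality, rule out odd perimeter by showing $D\equiv 3$ or $D\equiv 7\pmod 8$ in the three-odd-sides and one-odd-side cases, and then note that $16\mid D$ forces $4\mid 4K$. The one flaw is your early remark that ``$4K$ is even whenever it is an integer'': it assumes the conclusion and should be deleted, since your mod-$8$ case check already does that work.
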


\begin{proof}
Let the sides be \(A,B,C\), the area be \(\mathcal A\), and write
Heron's identity as
\[
 16\mathcal A^2
 =(A+B+C)(-A+B+C)(A-B+C)(A+B-C)=N\in\Z.
\]
Thus \(4\mathcal A=\sqrt N\in\Q\).  A rational square root of an
integer is an integer, so \(\sqrt N\in\Z\).

It remains to show that \(4\mid\sqrt N\).  If \(A+B+C\) were odd,
then either all three sides would be odd or exactly one would be odd.
Using
\[
 N=2A^2B^2+2A^2C^2+2B^2C^2-A^4-B^4-C^4,
\]
these cases give \(N\equiv3\pmod8\) and
\(N\equiv7\pmod8\), respectively, neither of which is a square.
Thus the perimeter is even.  Each of the four Heron factors is then
even, so \(16\mid N\).  Since \(N\) is a square,
\(4\mid\sqrt N\), and \(\mathcal A\in\Z\).
\end{proof}

The preceding lemma does not imply primitivity.  The obstruction is
local and can be stated exactly.

\begin{theorem}[Exact primitive square-scaling criterion]
\label{thm:primitive-scaling}
Suppose a marked rational class is written as
\[
 \left(1,\left(\frac uv\right)^2,\frac wz\right),
\]
where
\[
 u,v,w,z\in\Z_{>0},\qquad
 \gcd(u,v)=\gcd(w,z)=1.
\]
It possesses a primitive integral representative whose first two
sides remain integer squares if and only if
\begin{equation}\label{eq:primitive-criterion}
 {\frac{z}{\gcd(z,v^2)}=t^2
 \quad\text{for some }t\in\Z_{>0}.}
\end{equation}
When this holds, put
\begin{equation}\label{eq:minimal-square-scale}
 s=\prod_p p^{c_p},\qquad
 c_p=\max\left\{\ord_p(v),
                 \left\lceil\frac{\ord_p(z)}2\right\rceil\right\}.
\end{equation}
Then the unique primitive representative in the marked similarity
class is
\begin{equation}\label{eq:primitive-representative}
 \left(
 s^2,\left(\frac{su}{v}\right)^2,\frac{s^2w}{z}
 \right).
\end{equation}
\end{theorem}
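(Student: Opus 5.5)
Every representative of the marked class that keeps the first side an integer square has the form
\[
 \left(\lambda^2,\left(\frac{\lambda u}{v}\right)^2,\frac{\lambda^2 w}{z}\right),\qquad \lambda\in\Q_{>0},
\]
since marked similarity scales by a rational square (the remark after the definition). The plan is to determine exactly which \(\lambda\) make all three sides integers with the first two integer squares. Among those we then locate the primitive ones, working one prime \(p\) at a time with \(\ord_p\). Integrality of area is automatic by Lemma~\ref{lem:integer-sides-rational-area}, so only the side conditions matter.

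First, we need \(\lambda^2\in\Z\) and \(\lambda\) rational, so \(\lambda\in\Z\). Write \(e=\ord_p(\lambda)\ge0\). The second side is integral iff \(e\ge\ord_p(v)\) for all \(p\), using \(\gcd(u,v)=1\). The third is integral iff \(2e\ge\ord_p(z)\), using \(\gcd(w,z)=1\). Hence the admissible \(\lambda\) are exactly the positive multiples of \(s\) as defined in \eqref{eq:minimal-square-scale}. Scaling by \(\lambda=ms\) multiplies all sides by \(m^2\), so the gcd of the sides is \(m^2\) times the gcd at \(\lambda=s\). A primitive representative therefore exists iff the representative at \(\lambda=s\) is primitive, and then it is unique and equals \eqref{eq:primitive-representative}.

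It remains to show that \(\gcd\bigl(s^2,(su/v)^2,s^2w/z\bigr)=1\) is equivalent to \eqref{eq:primitive-criterion}, and I expect this local case analysis to be the main step. Fix \(p\) and write \(\alpha=\ord_p(v)\), \(\beta=\ord_p(z)\), \(c=c_p\). The \(p\)-adic valuations of the three sides are \(2c\), \(2(c-\alpha)+2\ord_p(u)\), and \(2c-\beta+\ord_p(w)\).

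If \(p\nmid s\), then \(p\) does not divide the first side, so \(p\) is not a common factor.

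If \(p\mid s\), then \(p\) divides the first side. If \(p\mid v\), then \(p\nmid u\), and the second side has valuation \(2(c-\alpha)\). If \(p\mid z\), then \(p\nmid w\), and the third side has valuation \(2c-\beta\). The prime \(p\) fails to divide some side iff \(c=\alpha\) (with \(\alpha>0\)) or \(2c=\beta\) (with \(\beta>0\)). Since \(c=\max(\alpha,\lceil\beta/2\rceil)\ge1\), this fails exactly when \(\lceil\beta/2\rceil>\alpha\) and \(\beta\) is odd.

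So \(p\) divides all three sides iff \(\beta>2\alpha\) and \(\beta\) is odd. Now \(\ord_p\bigl(z/\gcd(z,v^2)\bigr)=\max(0,\beta-2\alpha)\), and \(\beta-2\alpha\) has the parity of \(\beta\). Hence primitivity at every \(p\) is equivalent to \(\max(0,\beta-2\alpha)\) being even for all \(p\), i.e.\ to \(z/\gcd(z,v^2)\) being a perfect square. This gives \eqref{eq:primitive-criterion}.

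The only subtlety to check carefully is the coprimality bookkeeping: when \(p\mid v\) or \(p\mid z\), we must make sure \(\ord_p(u)\) or \(\ord_p(w)\) vanishes as claimed.
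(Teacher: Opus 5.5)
Your proof is correct and follows essentially the same route as the paper. Both arguments run a prime-by-prime valuation analysis at the least admissible exponent $c_p$, find that the bad primes are exactly those with $\ord_p(z)>2\ord_p(v)$ and $\ord_p(z)$ odd, and then observe (as you do) that your framing of admissible scalings as $\lambda\in s\Z_{>0}$, with the gcd scaling by $m^2$, packages the paper's remark that raising $\ord_p$ above $c_p$ forces a common factor $p$, giving nonexistence and uniqueness together.
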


\begin{proof}
Because the first normalized side is \(1\), a scaling that turns it
into an integer square must multiply all lengths by \(s^2\) for some
\(s\in\Z_{>0}\).  Integrality of the second and third sides requires
\[
 v\mid s,\qquad z\mid s^2.
\]
Fix a prime \(p\), and write
\[
 \beta=\ord_p(v),\qquad
 \delta=\ord_p(z),\qquad c=\ord_p(s).
\]
The least permitted exponent is
\[
 c_0=\max\{\beta,\lceil\delta/2\rceil\}.
\]
At \(c=c_0\), the \(p\)-adic valuations of the three sides in
\eqref{eq:primitive-representative} are
\begin{equation}\label{eq:three-valuations}
 2c,\qquad
 2c+2\ord_p(u)-2\beta,\qquad
 2c+\ord_p(w)-\delta.
\end{equation}
If \(c=0\), the first valuation is zero.  If \(c=\beta>0\), then
\(\ord_p(u)=0\) and the second valuation is zero.  The only remaining
case has \(c>\beta\), hence
\[
 c=\lceil\delta/2\rceil,\qquad \delta>2\beta.
\]
Since then \(\ord_p(w)=0\), the third valuation is zero when
\(\delta\) is even and is one when \(\delta\) is odd.  In the latter
case the other two valuations are positive.  Therefore the minimal
square scaling fails to be primitive at \(p\) precisely when
\begin{equation}\label{eq:bad-prime}
 \delta>2\beta\quad\text{and}\quad\delta\ \text{is odd}.
\end{equation}

On the other hand,
\[
 \ord_p\left(\frac{z}{\gcd(z,v^2)}\right)
 =\max\{\delta-2\beta,0\}.
\]
Thus there is no prime satisfying \eqref{eq:bad-prime} if and only if
the integer in \eqref{eq:primitive-criterion} is a square.  In that
case at least one valuation in \eqref{eq:three-valuations} is zero
for every \(p\), so \eqref{eq:primitive-representative} is primitive.
 If a bad prime exists, all three valuations are already positive at
 the least admissible \(c\), and increasing \(c\) cannot remove the
 common factor.  No square-preserving primitive representative then
 exists.

 Uniqueness follows from the same valuations.  Every admissible square
 scaling \(s'^2\) has
 \(\ord_p(s')\ge c_p\) for each prime \(p\).  If the inequality were
 strict for some \(p\), all three valuations in
 \eqref{eq:three-valuations} would increase by the same positive even
 integer, so the resulting three sides would have a common factor
 \(p\).  Primitivity therefore forces
 \(\ord_p(s')=c_p\) for every \(p\), and hence \(s'=s\).

 The sides in \eqref{eq:primitive-representative} are integral, while
 the area remains rational under square scaling.  Hence
 Lemma~\ref{lem:integer-sides-rational-area} shows that their area is
 an integer.
\end{proof}

\begin{example}\label{ex:nonprimitive}
The following example
\[
 (65^2,52^2,1599)
 \sim
 \left(1,\left(\frac45\right)^2,\frac{123}{325}\right)
\]
has
\[
 \frac{325}{\gcd(325,5^2)}=13,
\]
which is not a square.  Hence its marked class has no primitive
integral representative preserving both square sides.  Dividing the
displayed integral triple by \(13\) gives \((325,208,123)\), but the
first two sides are no longer both squares.
\end{example}

\section{The natural elliptic curve of two-square triangles}
\label{sec:elliptic-model}

Set
\[
 v=r-a.
\]
Using \(a^2+b^2=1\), equation \eqref{eq:normal-form} becomes
\begin{equation}\label{eq:natural-quartic}
 \mathcal Q_b:\quad v^2=q^4-b^2.
\end{equation}
This quartic is independent of the choice of sign of \(b\).

\begin{theorem}[Natural elliptic correspondence]
\label{thm:natural-elliptic-correspondence}
Let \(a,b\) satisfy \eqref{eq:intro-ab}.  The formulas
\begin{equation}\label{eq:quartic-to-elliptic}
 {x=2(q^2+v),\qquad y=4q(q^2+v)}
\end{equation}
and
\begin{equation}\label{eq:elliptic-to-quartic}
 {q=\frac{y}{2x},\qquad
        v=\frac{x}{4}-\frac{b^2}{x}}
\end{equation}
are mutually inverse bijections between the affine rational points of
\(\mathcal Q_b\) and
\[
 E_b(\Q)\setminus\{O,(0,0)\},\qquad
 E_b:\quad y^2=x^3+4b^2x.
\]
The corresponding point gives a nondegenerate triangle if and only if
\begin{equation}\label{eq:admissibility}
 {\rho_{a,b}(x,y)
 =a+\frac{x}{4}-\frac{b^2}{x}>0.}
\end{equation}
The points \((x,y)\) and \((x,-y)\) give the same marked triangle.
\end{theorem}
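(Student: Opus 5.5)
The plan is to verify directly that the two rational maps are well defined on the stated domains, that they land on the correct curves, and that they compose to the identity in both directions.

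\emph{Forward map.} Start from an affine point \((q,v)\) of \(\mathcal Q_b\) and put \(x=2(q^2+v)\), \(y=4q(q^2+v)\). First I check that \(x\neq0\). If \(q^2+v=0\), then \(v^2=q^4\), and \eqref{eq:natural-quartic} forces \(b^2=0\), contradicting \(b\ne0\). Next, using \(v^2=q^4-b^2\), I compute
\[
 x^3+4b^2x=8(q^2+v)^3+8(q^4-v^2)(q^2+v)=8(q^2+v)^2\bigl((q^2+v)+(q^2-v)\bigr)=16q^2(q^2+v)^2=y^2 .
\]
So the image lies on \(E_b\), and it avoids \(O\) and \((0,0)\) because \(x\ne0\).

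\emph{Backward map.} Take \((x,y)\in E_b(\Q)\) with \(x\ne0\), and set \(q=y/(2x)\), \(v=x/4-b^2/x\). Then
\[
 q^4=\frac{y^4}{16x^4}=\frac{(x^2+4b^2)^2}{16x^2}=\Bigl(\frac x4+\frac{b^2}{x}\Bigr)^2 ,
\]
so \(q^4-v^2=(x/4+b^2/x)^2-(x/4-b^2/x)^2=b^2\), which is \eqref{eq:natural-quartic}.

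\emph{Mutual inverses.} Starting from the curve, \(q^2=x/4+b^2/x\), hence \(q^2+v=x/2\). This gives \(2(q^2+v)=x\) and \(4q(q^2+v)=2qx=y\). Starting from the quartic, \(y/(2x)=q\) immediately. Moreover \(x/4-b^2/x=(q^2+v)/2-(q^4-v^2)/(2(q^2+v))=(q^2+v)/2-(q^2-v)/2=v\). This completes the bijection.

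\emph{Nondegeneracy and the sign ambiguity.} By Proposition~\ref{prop:marked-normal-form}, a point gives a nondegenerate triangle \((1,q^2,r)\) exactly when \(q\neq0\) and \(r>0\), where \(r=v+a=\rho_{a,b}(x,y)\). Here \(q\ne0\) is automatic, since \(q^2=x/4+b^2/x\) and \(q=0\) would give \(x^2=-4b^2\), impossible over \(\Q\) as \(b\ne0\). Positivity of \(r\) is exactly \eqref{eq:admissibility}. The converse in Proposition~\ref{prop:marked-normal-form} then shows the triangle is genuinely a rational Heron triangle with area \(|b|r/2\). Finally, \((x,-y)\) corresponds to \((-q,v)\). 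This point gives the same sides \(1,q^2,r\), and hence the same marked triangle, since the marking only depends on \(q^2\).

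I expect no real obstacle: the whole argument consists of the identity \(q^2+v=x/2\) together with the nonvanishing of \(q^2+v\), which rests on \(b\ne0\).
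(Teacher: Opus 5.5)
Your proposal is correct and follows essentially the same route as the paper: both arguments rest on the nonvanishing of \(q^2+v\) (forced by \(b\ne0\)) and the identities \(q^2+v=x/2\), \(q^2-v=2b^2/x\), after which admissibility is read off from \(r=a+v\) and the sign symmetry from \(q\mapsto-q\). The only differences are cosmetic. You check \(q\ne0\) explicitly, whereas the paper leaves this to Proposition~\ref{prop:marked-normal-form}. The paper additionally remarks that \(O\) and \((0,0)\) correspond to the two points at infinity of \(\mathcal Q_b\).
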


\begin{proof}
Let \((q,v)\) satisfy \eqref{eq:natural-quartic}, and put
\(t=q^2+v\).  Since
\[
 (q^2+v)(q^2-v)=q^4-v^2=b^2\ne0,
\]
we have \(t\ne0\).  Moreover,
\[
\begin{aligned}
 t^2+b^2
 &=(q^2+v)^2+b^2\\
 &=q^4+2q^2v+v^2+b^2
 =2q^2(q^2+v)=2q^2t.
\end{aligned}
\]
For \(x=2t\) and \(y=4qt\),
\[
 x^3+4b^2x=8t(t^2+b^2)=16q^2t^2=y^2.
\]
Thus \eqref{eq:quartic-to-elliptic} lands on \(E_b\), with \(x\ne0\).

Conversely, let \((x,y)\in E_b(\Q)\) with \(x\ne0\).  The elliptic
equation gives
\[
 \frac{y^2}{4x^2}=\frac{x}{4}+\frac{b^2}{x}.
\]
Defining \(q,v\) by \eqref{eq:elliptic-to-quartic}, we obtain
\[
 q^2+v=\frac{x}{2},\qquad
 q^2-v=\frac{2b^2}{x}.
\]
Their product is \(b^2\), so \(q^4-v^2=b^2\).  These identities also
show directly that the two displayed maps are inverse.

The only affine point of \(E_b\) with \(x=0\) is \((0,0)\), while
\(O\) is the point at infinity.  On the smooth projective completion
of \(\mathcal Q_b\), these are the images of its two points at
infinity.  Finally \(r=a+v\), so \(r>0\) is precisely
\eqref{eq:admissibility}.  Proposition~\ref{prop:marked-normal-form}
then supplies the nondegenerate triangle.  Replacing \(y\) by \(-y\)
replaces \(q\) by \(-q\) and does not change its side lengths.
\end{proof}

\begin{definition}
The \emph{admissible set} on \(E_b\) is
\[
 \Eadm(\Q)=
 \left\{(x,y)\in E_b(\Q):
 x\ne0,\ \rho_{a,b}(x,y)>0\right\}.
\]
\end{definition}

\begin{remark}[What is classified]\label{rem:marked-quotient}
For fixed \(a,b\) and an ordered pair of marked square sides,
Theorem~\ref{thm:natural-elliptic-correspondence} gives a bijection
between marked similarity classes and
\(\Eadm(\Q)/(y\sim-y)\).  Forgetting the marking introduces only
finitely many edge permutations, so the corresponding statement for
unmarked triangles is finite-to-one.  Replacing \(b\) by \(-b\)
reflects the coordinate picture and does not change the triangle.
\end{remark}

For later use, the complete list of exclusions and finite
identifications is:
\[
\begin{array}{@{}L{0.23\textwidth}L{0.37\textwidth}L{0.29\textwidth}@{}}
\toprule
Object & Geometric meaning & Treatment\\
\midrule
\(b=0\), equivalently \(a=\pm1\)
& collinear vertices and a singular cubic & excluded throughout\\
\(O\in E_b\) & one point at infinity of the quartic & no affine triangle\\
\((0,0)\in E_b\) & the other point at infinity & no affine triangle\\
\(r=0\) or \(r<0\) & zero or negative third side
& excluded by \eqref{eq:admissibility}\\
\(y\leftrightarrow-y\) & \(q\leftrightarrow-q\)
& the same marked triangle\\
\(b\leftrightarrow-b\) & reflection in the base line
& the same triangle\\
\bottomrule
\end{array}
\]

\section{The two-square elliptic surface}
\label{sec:sections}

\subsection{Generic model and exact Mordell--Weil group}

Write
\begin{equation}\label{eq:k-parametrization}
 h=k^2+1,\qquad
 a=\frac{2k}{h},\qquad
 b=\frac{k^2-1}{h}.
\end{equation}
Finite \(k\) parametrizes every rational point of \(a^2+b^2=1\)
except \((a,b)=(0,1)\).  This omission causes no unoriented geometric
loss: changing \(b\) to \(-b\) reflects the coordinate picture, leaves
the curve below unchanged, and corresponds to \(k\leftrightarrow1/k\).
Thus \(k=0\), which gives \((a,b)=(0,-1)\), represents the same
triangle geometry as the omitted point.  The values \(k=\pm1\) give
singular, degenerate fibers.  The fiber at \(k=0\) is smooth, but
\[
 Q_0=(2,4)\quad\text{has order \(4\)},\qquad
 P_0=2Q_0=(0,0)\quad\text{has order \(2\)}.
\]

\begin{proposition}\label{prop:elliptic-surface}
The change of variables
\[
 X=h^2x,\qquad Y=h^3y
\]
transforms \(E_b\) into
\begin{equation}\label{eq:elliptic-surface}
 {E_k:\quad Y^2=X^3+4(k^4-1)^2X.}
\end{equation}
Its standard Weierstrass discriminant and \(j\)-invariant are
\begin{equation}\label{eq:surface-invariants}
 \Delta(E_k)=-4096(k^4-1)^6,\qquad j(E_k)=1728.
\end{equation}
The inverse triangle map is
\begin{equation}\label{eq:scaled-inverse}
 {
 q=\frac{Y}{2hX},\qquad
 r=\frac{2k}{h}+\frac{X}{4h^2}
       -\frac{(k^2-1)^2}{X}.}
\end{equation}
The forward map is
\begin{equation}\label{eq:scaled-forward}
 X=2h^2(q^2+r-a),\qquad
 Y=4h^3q(q^2+r-a).
\end{equation}
\end{proposition}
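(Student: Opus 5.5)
This is a direct verification, and I will carry it out in four steps.

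\textbf{Step 1: the change of variables.} Substitute \(x=X/h^2\) and \(y=Y/h^3\) into \(E_b\colon y^2=x^3+4b^2x\). Multiplying through by \(h^6\) gives
\[
 Y^2=X^3+4b^2h^4X .
\]
By \eqref{eq:k-parametrization} we have \(bh=k^2-1\), so
\[
 b^2h^4=(k^2-1)^2h^2=(k^2-1)^2(k^2+1)^2=(k^4-1)^2 .
\]
This yields \eqref{eq:elliptic-surface}. The substitution is an isomorphism over \(\Q(k)\), and also on every fiber with \(h\ne0\). Since \(h=k^2+1>0\) for rational \(k\), this holds on all rational fibers.

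\textbf{Step 2: the invariants.} For \(Y^2=X^3+AX+B\) I use the standard formulas
\[
 \Delta=-16(4A^3+27B^2),\qquad j=1728\cdot\frac{4A^3}{4A^3+27B^2}.
\]
Here \(A=4(k^4-1)^2\) and \(B=0\). Then \(4A^3=256(k^4-1)^6\), so \(\Delta=-4096(k^4-1)^6\). Because \(B=0\), the fraction in the \(j\)-formula equals \(1\), so \(j=1728\). This is the \(E_k\) form of the nondegeneracy condition \(b\ne0\) away from \(k=\pm1\) and \(k=\pm i\).

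\textbf{Step 3: the two triangle maps.} Here I compose Step 1 with Theorem~\ref{thm:natural-elliptic-correspondence}, using \(r=a+v\).

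For the inverse map, \(q=y/(2x)\) becomes
\[
 q=\frac{Y/h^3}{2X/h^2}=\frac{Y}{2hX}.
\]
Next, \(v=x/4-b^2/x\) becomes
\[
 v=\frac{X}{4h^2}-\frac{b^2h^2}{X},\qquad b^2h^2=(k^2-1)^2 .
\]
Adding \(a=2k/h\) gives \eqref{eq:scaled-inverse}.

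For the forward map, \(X=h^2x=2h^2(q^2+v)\) and \(Y=h^3y=4h^3q(q^2+v)\). Replacing \(v\) by \(r-a\) gives \eqref{eq:scaled-forward}.

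\textbf{Step 4: bijectivity.} These maps inherit mutual inverseness and bijectivity from Theorem~\ref{thm:natural-elliptic-correspondence}, because the scaling in Step 1 is invertible.

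There is no real obstacle in this argument. The only point needing care is the simplification \(b^2h^4=(k^4-1)^2\) and its analogue \(b^2h^2=(k^2-1)^2\) in the inverse map. It also remains to confirm the normalization behind \(\Delta\): it must be the standard \(-16(4A^3+27B^2)\) rather than a scaled variant, and this is exactly what produces the constant \(-4096\).
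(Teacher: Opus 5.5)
Your verification is correct and follows the paper's proof essentially line by line. You substitute \(x=X/h^2\), \(y=Y/h^3\) and use \(b^2h^4=(k^4-1)^2\), read off \(\Delta=-64A^3=-4096(k^4-1)^6\) and \(j=1728\) from \(B=0\), and transport the maps of Theorem~\ref{thm:natural-elliptic-correspondence} through the scaling (your explicit check \(b^2h^2=(k^2-1)^2\) in the inverse map is a detail the paper leaves implicit).
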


\begin{proof}
Substitution into \(y^2=x^3+4b^2x\) gives
\[
 Y^2=X^3+4h^4b^2X
     =X^3+4(k^4-1)^2X.
\]
For a short Weierstrass equation \(Y^2=X^3+AX\), the standard
discriminant is \(-64A^3\).  Here \(A=4(k^4-1)^2\), which gives
\eqref{eq:surface-invariants}; \(B=0\) gives \(j=1728\).  The
discriminant of the cubic polynomial is one sixteenth of the standard
Weierstrass discriminant.
Equations \eqref{eq:scaled-inverse} and
\eqref{eq:scaled-forward} follow immediately from
\eqref{eq:quartic-to-elliptic}--\eqref{eq:elliptic-to-quartic}.
\end{proof}

\begin{theorem}[Exact generic Mordell--Weil group]
\label{thm:generic-mordell-weil}
Put
\[
 K_0=\Q(k),\qquad K_{\mathrm{geom}}=\overline{\Q}(k),\qquad
 D=2(k^4-1),\qquad
 E_0:\ y^2=x^3+x.
\]
The generic fiber of \eqref{eq:elliptic-surface} is the quadratic
twist
\[
 E_D:\quad Y^2=X^3+D^2X.
\]
Its full Mordell--Weil group over \(K_0\) is
\begin{equation}\label{eq:generic-mw-group}
 {
 E_D(K_0)=\Z Q_k\oplus\langle(0,0)\rangle
 \simeq\Z\oplus\Z/2\Z,}
\end{equation}
where
\[
 Q_k=\bigl(2(k^2+1)(k+1)^2,\,
            4(k^2+1)^2(k+1)^2\bigr).
\]
In particular, the generic rank is exactly \(1\).
\end{theorem}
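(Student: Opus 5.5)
The plan is to treat \eqref{eq:elliptic-surface} as an elliptic surface over \(\PP^1_k\) and combine Shioda--Tate, the height pairing and the complex multiplication by \(i\) to pin down \(E_D(K_0)\) completely. First I check the routine facts. \(E_k\) is \(E_0\) twisted by \(D\): rescaling \(X=DX'\), \(Y=D^{3/2}Y'\) turns \(Y^2=X^3+D^2X\) into \(Y'^2=X'^3+X'\). For \(Q_k\), write \(X_Q=2h(k+1)^2\). Then
\[
 X_Q^2+D^2=4h^2(k+1)^2\bigl((k+1)^2+(k-1)^2\bigr)=8h^3(k+1)^2,
\]
so \(X_Q(X_Q^2+D^2)=16h^4(k+1)^4=Y_Q^2\). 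Since \(A=D^2\) has degree \(8\), the Kodaira--N\'eron model is a K3 surface (\(\chi=2\)). Its singular fibers are four fibers of type \(I_0^*\) at \(k^4=1\), where \(\ord A=2\) and \(\ord\Delta=6\). The fiber at \(k=\infty\) is smooth, and the Euler numbers sum to \(4\cdot 6=24\), as they must.

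For the rank bound, Shioda--Tate gives \(\rho=2+4\cdot4+\rank E(K_{\mathrm{geom}})\le20\), so the geometric rank is at most \(2\). The automorphism \([i](X,Y)=(-X,iY)\) is defined over \(\Q(i)(k)\). Because \([i]\) is an isometry of the height pairing with \([i]^2=-1\), we get \(\langle P,[i]P\rangle=\langle [i]P,-P\rangle=-\langle P,[i]P\rangle\), hence \(\langle P,[i]P\rangle=0\). So once \(Q_k\) is non-torsion, \(Q_k\) and \([i]Q_k\) span a rank-\(2\) sublattice, and the geometric rank is exactly \(2\). Complex conjugation \(\sigma\) fixes \(Q_k\) and sends \([i]Q_k\) to \([-i]Q_k=-[i]Q_k\). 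Points over \(K_0\) lie in the \(\sigma\)-fixed part of \(E(K_{\mathrm{geom}})\otimes\Q\), which is \(\Q Q_k\), so \(\rank E_D(K_0)=1\).

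The height of \(Q_k\) comes from the formula \(\langle P,P\rangle=2\chi+2(P\cdot O)-\sum_v\mathrm{contr}_v(P)\). Here \(\deg X_Q=4=2\chi\), so \(Q_k\) does not meet \(O\), including at \(k=\infty\). At an \(I_0^*\) fiber the contribution is \(1\) exactly when \(X\) vanishes there; \(X_Q\) vanishes at \(k=-1,\pm i\) but not at \(k=1\). Hence \(\langle Q_k,Q_k\rangle=4-3=1\), and in particular \(Q_k\) is non-torsion. The crucial observation is that every \(I_0^*\) contribution is an integer (\(0\) or \(1\)), so all heights of \(K_0\)-sections are integers.

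For torsion, a \(2\)-torsion point needs a root of \(X(X^2+D^2)\) in \(K_0\), and only \(X=0\) qualifies. A point \(P\) with \(2P=(0,0)\) would have \(X=\pm D\) and require \(\pm 2D^3\), i.e.\ \(\pm4(k^4-1)\), to be a square in \(K_0\), which fails. Odd torsion injects into good fibers, so specializing at two good values of \(k\) (for instance \(k=2,3\)) and counting points on reductions modulo small primes excludes it. This gives \(E_D(K_0)_{\mathrm{tors}}=\langle(0,0)\rangle\).

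The main obstacle is saturation, i.e.\ showing \(Q_k\) generates the free part over \(K_0\) and is not \(mR+T\) for some \(m\ge2\). I would use integrality of heights: if \(Q_k=mR+T\) with \(T\) torsion, then \(\langle R,R\rangle=1/m^2\). This must be a positive integer, so \(m=1\). As a cross-check for \(m=2\), \(2\)-descent gives the same conclusion. The map \(x\mapsto x\bmod K_0^{\times2}\) sends \(Q_k\) to \(2h\) and \((0,0)\) to \(D^2\equiv1\), so neither \(Q_k\) nor \(Q_k+(0,0)\) lies in \(2E(K_0)\). Together these give \eqref{eq:generic-mw-group}.
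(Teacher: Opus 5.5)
Your proof is correct, but it takes a different route from the paper's.

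\textbf{The paper's route.} The paper never uses the elliptic surface. It takes the twisting double cover \(\mathcal H: s^2=2(k^4-1)\) and notes that \(\mathcal H\) is isomorphic over \(\Q\) to \(E_0\) itself. It then identifies \(E_D(K_0)\) with the \(\Q\)-morphisms \(f:\mathcal H\to E_0\) satisfying \(f\circ\iota=[-1]\circ f\). The rigidity lemma gives \(f=\phi+T\) with \(2T=O\), hence
\(E_D(K_0)\simeq\operatorname{Hom}_\Q(\mathcal H,E_0)\oplus E_0(\Q)[2]\simeq\operatorname{End}_\Q(E_0)\oplus\Z/2\Z\), and \(Q_k\) corresponds to \(\theta\). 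This single identification gives rank, torsion and saturation together. Over \(\overline{\Q}\) it also gives \(\Z[i]\oplus(\Z/2\Z)^2\). It needs only \(\operatorname{End}_\Q(E_0)=\Z\), but it depends on the fortunate isomorphism \(\mathcal H\simeq E_0\).

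\textbf{Your route.} You use the Mordell--Weil lattice machinery:
\begin{enumerate}
\item the K3 bound \(\rho\le20\) and Shioda--Tate give geometric rank at most \(2\);
\item the \([i]\)-isometry and complex conjugation cut the rank to exactly \(1\) over \(K_0\);
\item a separate analysis determines the torsion;
\item integrality of heights on a surface whose only reducible fibers are \(I_0^*\) turns \(\langle Q_k,Q_k\rangle=1\) into saturation.
\end{enumerate}
This is heavier and depends on the exact fiber configuration. In exchange it does not need \(\mathcal H\simeq E_0\), and it gives information the paper's proof does not. You get \(\langle Q_k,Q_k\rangle=1\), which is consistent with \(\langle P_k,P_k\rangle=4\) computed directly from \(P_k=(4k^2,4k(k^4+1))\). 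You also get the geometric lattice \(\langle1\rangle^{2}\), which reproduces \(\lvert\operatorname{disc}\operatorname{NS}\rvert=4^4\cdot1/4^2=16\) from Corollary~\ref{cor:common-k3-ns}.

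\textbf{A small simplification.} Odd torsion needs no point counting, and one specialization would suffice in any case. A torsion section \(T\ne O\) has height \(0\), so \(\sum_v\mathrm{contr}_v(T)=4\). Hence torsion injects into \(\prod_v\Phi_v\), which is a \(2\)-group, so there is no odd torsion.
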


\begin{proof}
Since \(D=2(k^4-1)\) has a simple zero at \(k=1\), it is not a square
in \(K_0=\Q(k)\).  Thus adjoining \(s\) with \(s^2=D\) gives a genuine
quadratic extension of \(K_0\).  Let \(\mathcal H\) be the smooth
projective model of
\[
 s^2=D=2(k^4-1),
\]
with origin \((k,s)=(1,0)\).  There is an isomorphism over \(\Q\)
\begin{equation}\label{eq:H-to-E0}
 \theta:\mathcal H\longrightarrow E_0,\qquad
 x=\frac{k+1}{k-1},\qquad
 y=\frac{2(k^2+1)(k+1)}{(k-1)s}.
\end{equation}
Indeed, direct substitution gives \(y^2=x^3+x\), and the inverse is
\begin{equation}\label{eq:E0-to-H}
 k=\frac{x+1}{x-1},\qquad
 s=\frac{4y}{(x-1)^2}.
\end{equation}
The formulas extend over the smooth projective models; the chosen
origin maps to \(O\in E_0\).  The deck involution
\(\iota(k,s)=(k,-s)\) corresponds under \(\theta\) to negation on
\(E_0\).

Over \(L_0=K_0(s)=\Q(\mathcal H)\), the twist isomorphism and its inverse
are
\begin{equation}\label{eq:twist-isomorphism}
 E_0\longrightarrow E_D,\quad
 (x,y)\longmapsto(Dx,Dsy),
 \qquad
 (X,Y)\longmapsto\left(\frac XD,\frac{Y}{Ds}\right).
\end{equation}
Consequently, \(E_D(K_0)\) identifies with the group of
\(\Q\)-rational maps \(f:\mathcal H\dashrightarrow E_0\) satisfying
\[
 f\circ\iota=[-1]\circ f.
\]
Every such rational map extends to a morphism because both curves are
smooth and projective.  Put \(T=f(O_{\mathcal H})\).  By the
elliptic-curve rigidity lemma---equivalently, the standard fact that a
morphism of elliptic curves taking the origin to the origin is a group
homomorphism---the map \(\phi=f-T\) belongs to
\(\operatorname{Hom}_{\Q}(\mathcal H,E_0)\)
\cite[Chapter~III, \S4]{Silverman}.  Hence \(f\) has the unique
decomposition
\[
 f=\phi+T,\qquad
 \phi\in\operatorname{Hom}_{\Q}(\mathcal H,E_0),\quad
 T\in E_0(\Q).
\]
The displayed anti-invariance condition is equivalent to \(2T=O\).
Hence
\begin{equation}\label{eq:anti-invariant-maps}
 E_D(K_0)\simeq
 \operatorname{Hom}_{\Q}(\mathcal H,E_0)\oplus E_0(\Q)[2].
\end{equation}
By \eqref{eq:H-to-E0},
\(\operatorname{Hom}_{\Q}(\mathcal H,E_0)
\simeq\operatorname{End}_{\Q}(E_0)\).
The curve \(E_0\) has geometric complex multiplication by
\(\Z[i]\), while complex conjugation fixes only the subring \(\Z\);
thus \cite[Chapter~III, \S9]{Silverman}
\[
 \operatorname{End}_{\Q}(E_0)=\Z.
\]
Moreover,
\[
 E_0(\Q)[2]=\{O,(0,0)\}.
\]
Finally, evaluating the identity map of \(E_0\) at the generic point
of \(\mathcal H\) and applying \eqref{eq:twist-isomorphism} gives
exactly \(Q_k\).  The constant nonzero two-torsion point gives
\((0,0)\).  This proves \eqref{eq:generic-mw-group}.
\end{proof}

\begin{remark}\label{rem:geometric-generic-rank}
The anti-invariant-map description remains valid after extending the
constant field to \(\overline{\Q}\).  Since
\[
 \operatorname{End}_{\overline{\Q}}(E_0)=\Z[i],
\]
one has
\[
 E_D(K_{\mathrm{geom}})
 =E_D(\Q(i)(k))
 \simeq\Z Q_k\oplus\Z R_k\oplus(\Z/2\Z)^2.
\]
Thus the geometric generic rank is exactly \(2\), where
\[
 R_k=\bigl(-2(k^2+1)(k+1)^2,\,
            4i(k^2+1)^2(k+1)^2\bigr),
\]
is the image of the endomorphism \([i](x,y)=(-x,iy)\).
\end{remark}

\begin{proposition}[Kummer--K3 interpretation]
\label{prop:kummer-k3-surface}
Let \(\mathcal S\to\PP^1_k\) be the smooth relatively minimal elliptic
surface whose generic fiber is
\[
 E_D:\qquad
 Y^2=X^3+4(k^4-1)^2X=X^3+D^2X,
 \qquad D=2(k^4-1).
\]
Then:
\begin{enumerate}
\item over \(\overline{\Q}\), the singular fibers of \(\mathcal S\)
are four fibers of type \(I_0^*\), situated at \(k^4=1\), while the
fiber at \(k=\infty\) is smooth;
\item \(\mathcal S\) is a K3 surface,
\[
 \rho(\mathcal S_{\overline{\Q}})=20,\qquad
 E_D(K_{\mathrm{geom}})
 \simeq\Z[i]\oplus(\Z/2\Z)^2,
\]
so its geometric Mordell--Weil rank is \(2\);
\item as a smooth projective surface over \(\Q\),
\begin{equation}\label{eq:kummer-identification}
 {\mathcal S\simeq
 \operatorname{Km}(E_0\times E_0),\qquad E_0:y^2=x^3+x.}
\end{equation}
Under this identification, the displayed elliptic fibration is the
Kummer fibration induced by projection to the second factor followed
by \(E_0\to E_0/\{\pm1\}\).
\end{enumerate}
\end{proposition}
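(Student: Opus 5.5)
I will prove the three items in order, starting from Tate's algorithm on the Weierstrass model \(Y^2=X^3+D^2X\) with \(D=2(k^4-1)\).

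\textbf{Item 1.} At each root \(k_0\) of \(k^4-1\), the coefficient \(A=D^2\) vanishes to order exactly \(2\), while \(\Delta=-64D^6\) vanishes to order \(6\), so \(j=1728\) stays finite. The model is minimal there, and Tate's algorithm gives type \(I_0^*\). Over \(\overline{\Q}\) the polynomial \(k^4-1\) is squarefree, so there are exactly four such roots.

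At \(k=\infty\), put \(k=1/t\). The coefficient \(A\) has degree \(8\) in \(k\), so the model is minimal at \(\infty\) after the standard change \(X\mapsto X/t^4\), \(Y\mapsto Y/t^6\). The new coefficient is \(4(1-t^4)^2\), which does not vanish at \(t=0\), so the fiber at infinity is smooth.

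\textbf{Item 2.} Since \(\deg A=8\le 8\) and the discriminant degree is \(24\), the Euler number is \(e=4\cdot 6=24\). Equivalently, the minimal model has \(\chi=2\), so \(\mathcal S\) is an elliptic K3 surface.

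For the Picard number I will use Shioda--Tate:
\[
 \rho=2+\sum_v(m_v-1)+\rank E_D(K_{\mathrm{geom}}).
\]
Each \(I_0^*\) fiber contributes \(4\), and Remark~\ref{rem:geometric-generic-rank} gives geometric rank \(2\), so
\[
 \rho=2+16+2=20.
\]
The group structure \(\Z[i]\oplus(\Z/2\Z)^2\) is exactly that remark, with \(Q_k\) and \(R_k=[i]Q_k\) generating a free \(\Z[i]\)-module of rank one. One point needs care: the remark computes the Mordell--Weil group over \(\Q(i)(k)\), while Shioda--Tate needs it over \(\overline{\Q}(k)\). This is covered by the anti-invariant-map description, because \(\operatorname{Hom}_{\overline{\Q}}(\mathcal H,E_0)=\operatorname{End}_{\overline{\Q}}(E_0)=\Z[i]\) and \(E_0[2]\) is all defined over \(\Q(i)\).

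\textbf{Item 3.} This is the substantive step, and the plan is to reuse the isomorphism \(\theta\) from \eqref{eq:H-to-E0}. The twist description shows that \(\mathcal S\) is birational to the quotient of \(\mathcal H\times E_0\) by the diagonal involution \((\iota,[-1])\). Via \(\theta\), this quotient becomes \((E_0\times E_0)/\langle(-1,-1)\rangle\), and its minimal resolution is \(\operatorname{Km}(E_0\times E_0)\). Concretely, the birational map comes from the twist formulas \eqref{eq:twist-isomorphism}:
\[
 (P,(x,y))\longmapsto \bigl(k(P),\,Dx,\,D\,s(P)\,y\bigr),
\]
which is visibly invariant under the diagonal involution and defined over \(\Q\).

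Under this map, the fibration \(\mathcal S\to\PP^1_k\) corresponds to projecting \(\operatorname{Km}\) to the first factor \(\mathcal H\cong E_0\), followed by the quotient \(E_0\to E_0/\{\pm1\}\cong\PP^1\), with \(k\) as coordinate via \(k=(x+1)/(x-1)\). The statement says "second factor", so the factors are swapped to match. The four fixed points of \([-1]\) give the four \(I_0^*\) fibers. Because both surfaces are K3, hence minimal, the birational map extends to an isomorphism over \(\Q\).

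I expect the main obstacle to be bookkeeping in item 3. One must check that the twist quotient is exactly the diagonal \((-1,-1)\)-quotient, and not a quotient involving a translation. One must also check that everything descends to \(\Q\). Both checks follow from the fact that \(\theta\) is defined over \(\Q\) and intertwines \(\iota\) with \([-1]\).
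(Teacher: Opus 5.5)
Your proposal is correct and follows the paper's proof essentially step for step. The shared steps are: Tate's algorithm at the roots of \(k^4=1\) and at \(t=1/k\); Euler number \(24\), hence \(\chi=2\) and a K3 surface; the anti-invariant-map description over \(\overline{\Q}\) combined with Shioda--Tate to get \(\rho=20\); and the quotient of \(\mathcal H\times E_0\) by \((\iota,[-1])\), transported via \(\theta\) to the Kummer involution \((-1,-1)\). The differences are cosmetic: you put the factors in the opposite order, and you deduce the K3 property directly from \(\chi=2\) with a section rather than writing out the canonical bundle formula and \(q=0\).
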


\begin{proof}
For the displayed Weierstrass equation,
\[
 c_4=-192(k^4-1)^2,\qquad
 \Delta=-4096(k^4-1)^6.
\]
At each root \(\zeta\) of \(k^4-1\), the root is simple, the equation
is locally minimal, and
\[
 \ord_{k-\zeta}(c_4)=2,\qquad
 \ord_{k-\zeta}(\Delta)=6.
\]
Tate's algorithm therefore gives a fiber of type \(I_0^*\).  At
infinity put \(t=1/k\) and
\[
 X_\infty=t^4X,\qquad Y_\infty=t^6Y.
\]
The equation becomes
\[
 Y_\infty^2=X_\infty^3+4(1-t^4)^2X_\infty,
\]
whose discriminant is nonzero at \(t=0\).  Thus the fiber at infinity
is smooth and there are no further singular fibers.

Since \(e(I_0^*)=6\),
\[
 e(\mathcal S)=4\cdot6=24,\qquad
 \chi(\mathcal O_{\mathcal S})=2.
\]
The fibration has a section and hence no multiple fibers.  The
canonical bundle formula for an elliptic surface over \(\PP^1\) gives
\[
 K_{\mathcal S}\simeq
 \pi^*\mathcal O_{\PP^1}
 \bigl(\chi(\mathcal O_{\mathcal S})-2\bigr)
 \simeq\mathcal O_{\mathcal S},
\]
and \(q(\mathcal S)=0\); hence \(\mathcal S\) is a K3 surface
\cite{SchuettShioda}.

The anti-invariant-map calculation in
\eqref{eq:anti-invariant-maps}, now over \(\overline{\Q}\), gives
\[
 E_D(K_{\mathrm{geom}})
 \simeq
 \operatorname{End}_{\overline{\Q}}(E_0)
 \oplus E_0(\overline{\Q})[2]
 \simeq\Z[i]\oplus(\Z/2\Z)^2.
\]
Every \(I_0^*\)-fiber contributes a \(D_4\) root lattice.  The
Shioda--Tate formula therefore yields
\[
 \rho(\mathcal S_{\overline{\Q}})
 =2+4\cdot4+2=20.
\]

It remains to identify the surface.  On \(E_0\times\mathcal H\), with
\[
 E_0:\ y_1^2=x_1^3+x_1,\qquad
 \mathcal H:\ s^2=D,
\]
consider
\[
 \alpha(P,(k,s))=(-P,(k,-s)).
\]
The invariant functions \(k,x_1,w=sy_1\) satisfy
\[
 w^2=D(x_1^3+x_1).
\]
The change \(X=Dx_1,\ Y=Dw\) gives
\[
 Y^2=X^3+D^2X.
\]
Thus \((E_0\times\mathcal H)/\langle\alpha\rangle\) is birational over
\(\Q\) to the Weierstrass model of \(\mathcal S\).  Under the
isomorphism \(\theta:\mathcal H\to E_0\) in
\eqref{eq:H-to-E0}, the involution \(\alpha\) becomes
\[
 (P,Q)\longmapsto(-P,-Q)
\]
on \(E_0\times E_0\).  Its fixed locus is
\(E_0[2]\times E_0[2]\), consisting of sixteen geometric points.
Consequently the quotient has the sixteen ordinary double points of
the Kummer quotient, and its minimal resolution is
\(\operatorname{Km}(E_0\times E_0)\).  The smooth minimal resolutions
on both sides are K3 surfaces, so the birational identification extends
to the isomorphism \eqref{eq:kummer-identification}.  Projection to
\(\mathcal H/\langle\iota\rangle\simeq\PP^1_k\) gives the asserted
elliptic fibration.
\end{proof}

\subsection{Sections, pointwise non-torsion, and fixed-fiber infinitude}

\begin{proposition}[The isosceles section and its double]
\label{prop:isosceles-section}
On \(E_b\), set
\[
 \widetilde Q=(2(1+a),4(1+a)).
\]
On \(E_k\), the corresponding point is
\[
 Q_k=\bigl(2(k^2+1)(k+1)^2,\,
           4(k^2+1)^2(k+1)^2\bigr).
\]
If
\[
 P_k=(4k^2,4k(k^4+1)),
\]
then
\begin{equation}\label{eq:double-section}
 {2Q_k=P_k.}
\end{equation}
For \(k>0\), \(k\ne1\), the point \(Q_k\) corresponds to the
isosceles triangle
\[
 \left(1,1,\frac{4k}{k^2+1}\right).
\]
\end{proposition}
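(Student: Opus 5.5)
The plan is to verify each assertion by direct substitution, working first on \(E_b\) and then transporting to \(E_k\) with the change of variables in Proposition~\ref{prop:elliptic-surface}.

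\textbf{Step 1: \(\widetilde Q\) lies on \(E_b\).} Using \(a^2+b^2=1\), we have \((1+a)^2+b^2=2(1+a)\). Hence
\[
 x^3+4b^2x=8(1+a)\bigl((1+a)^2+b^2\bigr)=16(1+a)^2=y^2 .
\]

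\textbf{Step 2: the image on \(E_k\) is \(Q_k\).} Substitute \(a=2k/h\), which gives \(1+a=(k+1)^2/h\). Then apply \(X=h^2x\) and \(Y=h^3y\). This yields \(X=2h(k+1)^2\) and \(Y=4h^2(k+1)^2\), which are exactly the coordinates of \(Q_k\).

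\textbf{Step 3: the doubling formula \eqref{eq:double-section}.} On \(Y^2=X^3+AX\) I will use the duplication formula
\[
 X(2P)=\frac{(X^2-A)^2}{4Y^2}.
\]
Here \(A=4(k^2-1)^2h^2\), and the key factorization is
\[
 X^2-A=4h^2(k+1)^2\bigl[(k+1)^2-(k-1)^2\bigr]=16kh^2(k+1)^2 .
\]
Dividing \((X^2-A)^2\) by \(4Y^2=64h^4(k+1)^4\) then gives \(X(2Q_k)=4k^2\).

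For the \(Y\)-coordinate I will use the tangent slope \(\lambda=(3X^2+A)/(2Y)\) and the formula
\[
 Y(2P)=\lambda\bigl(X(P)-X(2P)\bigr)-Y(P).
\]
This is the main computational obstacle: it is a polynomial identity in \(k\) that must reduce to \(4k(k^4+1)\), and in particular the sign must come out right. As a cross-check that the sign convention is consistent, I will verify independently that \(P_k\) lies on \(E_k\). This uses
\[
 16k^2(k^4+1)^2=64k^6+16k^2(k^4-1)^2,
\]
which is equivalent to \((k^4+1)^2-(k^4-1)^2=4k^4\).

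\textbf{Step 4: the isosceles triangle.} I will use the inverse map \eqref{eq:elliptic-to-quartic} on \(E_b\). First, \(q=y/(2x)=1\). Next,
\[
 v=\frac{1+a}{2}-\frac{b^2}{2(1+a)}=\frac{1+a}{2}-\frac{1-a}{2}=a,
\]
so \(r=a+v=2a=4k/(k^2+1)\). The normalized sides are therefore \((1,1,4k/(k^2+1))\).

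It remains to check admissibility via Theorem~\ref{thm:natural-elliptic-correspondence}. We have \(x=2(1+a)\ne0\) unless \(k=-1\). The condition \(\rho>0\) means \(r>0\), which holds exactly when \(k>0\). Finally, \(k\ne1\) ensures \(b\ne0\), so the triangle is nondegenerate.
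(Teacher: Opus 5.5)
Your strategy is sound and is essentially the paper's: verify $\widetilde Q\in E_b$, transport to $E_k$ by $X=h^2x$, $Y=h^3y$, and double with the tangent-line formulas. Steps 1, 2 and 4 are correct. Your admissibility check via $\rho>0\iff k>0$ and $b\ne0\iff k\ne\pm1$ is a fine substitute for the paper's observation $0<r<2$.

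The weak point is the $Y$-coordinate in Step 3, which is exactly what separates $2Q_k=P_k$ from $2Q_k=-P_k$. Your proposed cross-check cannot settle this. Once $X(2Q_k)=4k^2$ is known, the curve equation only gives $2Q_k\in\{P_k,-P_k\}$, and the verification that $P_k$ lies on $E_k$ holds equally for $-P_k$. So as written, the sign is unproved and you must actually carry out the computation. It does succeed. Since $A=4h^2(k-1)^2(k+1)^2$,
\[
 3X^2+A=4h^2(k+1)^2\bigl[3(k+1)^2+(k-1)^2\bigr]=16h^2(k+1)^2(k^2+k+1),
\]
so $\lambda=2(k^2+k+1)$. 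Expansion then gives $\lambda\bigl(2h(k+1)^2-4k^2\bigr)-4h^2(k+1)^2=4k(k^4+1)$.

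Alternatively, $Y(2Q_k)/\bigl(4k(k^4+1)\bigr)$ lies in $\Q(k)$ and has square $1$, so it is a constant $\pm1$. You can fix the constant by one specialization where the doubling formulas are regular. For example, at $k=2$ one has $Q_2=(90,900)$, $\lambda=14$ and $2Q_2=(16,136)=P_2$.

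The paper avoids the messy polynomial by doubling on $E_b$ before rescaling. Using $b^2=(1-a)(1+a)$, the tangent slope at $\widetilde Q$ is $\lambda=a+2$, so $2\widetilde Q=\bigl(a^2,\,a(2-a^2)\bigr)$. Rescaling gives $h^2a^2=4k^2$ and $h^3a(2-a^2)=4k\bigl(h^2-2k^2\bigr)=4k(k^4+1)$. This settles in one line what you identified as the main obstacle. Your factorization $X^2-A=16kh^2(k+1)^2$ on $E_k$ is equally clean for the $X$-coordinate, but the $Y$-coordinate is much simpler in the $a$-model.
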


\begin{proof}
The relation \(a^2+b^2=1\) gives
\[
 16(1+a)^2=8(1+a)^3+8b^2(1+a),
\]
so \(\widetilde Q\in E_b(\Q)\).  Its tangent slope is
\[
 \lambda
 =\frac{3[2(1+a)]^2+4b^2}{2\cdot4(1+a)}
 =a+2.
\]
The doubling formulas give
\[
 x(2\widetilde Q)=\lambda^2-4(1+a)=a^2
\]
and
\[
 y(2\widetilde Q)
 =\lambda\bigl(2(1+a)-a^2\bigr)-4(1+a)
 =a(2-a^2).
\]
Multiplication by \(h^2,h^3\) yields
\[
 h^2a^2=4k^2,\qquad
 h^3a(2-a^2)=4k(k^4+1),
\]
which proves \eqref{eq:double-section}.  Substitution of \(Q_k\) in
\eqref{eq:scaled-inverse} gives \(q=1\) and
\(r=4k/(k^2+1)\).  For \(k>0\), \(k\ne1\), one has \(0<r<2\), so
the triangle is nondegenerate.
\end{proof}

\begin{theorem}[Uniform non-torsion]\label{thm:uniform-nontorsion}
For every
\[
 k\in\Q\setminus\{0,\pm1\},
\]
the point \(P_k\) has infinite order on \(E_k(\Q)\).  Hence \(Q_k\)
also has infinite order and
\[
 \rank E_k(\Q)\ge1.
\]
\end{theorem}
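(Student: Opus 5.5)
The plan is to use the known classification of rational torsion on curves with \(j=1728\), rather than Nagell--Lutz or reduction arguments. For \(y^2=x^3+Ax\) with \(A\in\Q^\times\), the torsion subgroup of \(E(\Q)\) is one of the following \cite[Chapter~X, Proposition~6.1]{Silverman}:
\begin{itemize}
\item \(\Z/4\Z\) when \(A=4d^4\), with the points of order \(4\) equal to \((2d^2,\pm4d^3)\);
\item \((\Z/2\Z)^2\) when \(-A\) is a square;
\item \(\Z/2\Z=\{O,(0,0)\}\) otherwise.
\end{itemize}
Here \(A=4(k^4-1)^2\), which is positive for \(k\in\Q\setminus\{0,\pm1\}\). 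So \(-A\) is never a rational square, and the only candidates for nontrivial torsion are \((0,0)\) and possibly two points of order \(4\). The strategy is to show that \(P_k=(4k^2,4k(k^4+1))\) is none of these.

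First, \(y(P_k)=4k(k^4+1)\ne0\) for \(k\ne0\), so \(P_k\ne(0,0)\) and \(P_k\ne O\). By Proposition~\ref{prop:elliptic-surface}, \(E_k\) is a smooth fiber for these \(k\), so the classification applies. Next, suppose \(P_k\) had order \(4\). Then \(4(k^4-1)^2=4d^4\) for some \(d\in\Q\), and \(x(P_k)=2d^2\). This gives \(4k^2=2d^2\), that is \(d^2=2k^2\), which is impossible for rational \(k\ne0\) because \(\sqrt2\notin\Q\). Hence \(P_k\) is not torsion.

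Finally, \eqref{eq:double-section} gives \(2Q_k=P_k\). If \(Q_k\) were torsion, so would be \(P_k\); therefore \(Q_k\) also has infinite order, and \(\rank E_k(\Q)\ge1\).

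The main point needing care is applying the torsion classification correctly, including the exact form of the order-\(4\) points. If one prefers to avoid citing it, an alternative is direct: a torsion point of order \(4\) satisfies \(2P=(0,0)\). By the doubling formula, \(x(2P)=(x^2-A)^2/(4y^2)\), which vanishes exactly when \(x^2=A\). Here that reads \(16k^4=4(k^4-1)^2\), i.e.\ \(2k^2=\pm(k^4-1)\). Its rational solutions require \(k^2=1\pm\sqrt2\), so there are none. Orders \(3\), \(5\), \(6\), etc.\ are excluded by the classification, which itself rests on reduction modulo primes \(p\equiv3\pmod4\), where \(\#\tilde E(\F_p)=p+1\).
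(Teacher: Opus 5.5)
Your proof is correct, but it takes a different route from the paper.

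The paper writes \(k=m/n\) and passes to the integral model \(V^2=U^3+4(m^4-n^4)^2U\), with the integral point \(P'=(4m^2n^2,4mn(m^4+n^4))\). It then applies Nagell--Lutz in the form \(V^2\mid4A^3+27B^2\), which gives \(m^2n^2S^2\mid16D^6\) with \(S=m^4+n^4\) and \(D=m^4-n^4\). A coprimality and case analysis finishes the proof: first \(\abs m,\abs n\in\{1,2,4\}\), then \(\gcd(S,16D^6)=1\), a contradiction.

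You instead invoke the torsion classification for \(y^2=x^3+Ax\), after the harmless rescaling \(A=Du^4\) to Silverman's fourth-power-free integer \(D\). This reduces everything to excluding three explicit candidates. You rule out \((0,0)\) via \(y(P_k)\ne0\), and the possible order-\(4\) points via \(x(P_k)=4k^2\ne2d^2\).

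Your route is shorter and avoids the integral model and the divisibility bookkeeping. It also bounds the torsion as a byproduct: \(E_k(\Q)_{\mathrm{tors}}\subseteq\Z/4\Z\). In fact it is \(\Z/2\Z\), since \(A=4d^4\) would force \(k^4-1=\pm d^2\), which Fermat's theorem on \(x^4-y^4=z^2\) excludes. The price is dependence on a result specific to \(j=1728\), proved via supersingular reduction at primes \(p\equiv3\pmod4\). The paper's argument uses only the general Nagell--Lutz theorem, so the same template applies to families without complex multiplication.

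One small slip in your fallback argument: \(2k^2=\pm(k^4-1)\) gives \(k^2\in\{1\pm\sqrt2,\,-1\pm\sqrt2\}\), not only \(1\pm\sqrt2\). All four values are irrational, so the conclusion is unaffected.
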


\begin{proof}
Write \(k=m/n\), where
\[
 m,n\in\Z,\qquad \gcd(m,n)=1,\qquad
 mn(m-n)(m+n)\ne0.
\]
The substitution \(X=U/n^4\), \(Y=V/n^6\) gives the integral model
\begin{equation}\label{eq:integral-fiber}
 \mathcal E_{m,n}^{\mathrm{int}}:\quad
 V^2=U^3+4(m^4-n^4)^2U
\end{equation}
and the integral point
\[
 P'=\bigl(4m^2n^2,\,4mn(m^4+n^4)\bigr).
\]
Its \(V\)-coordinate is nonzero because
\(mn(m^4+n^4)\ne0\); hence \(P'\) is not a two-torsion point.
Put
\[
 D=m^4-n^4,\qquad S=m^4+n^4.
\]
The Nagell--Lutz theorem in its cubic-discriminant form
\cite[Chapter~VII, \S3]{Silverman} says that if a non-two-torsion
integral point on \(V^2=U^3+AU+B\) is torsion, then its
\(V\)-coordinate squared divides \(4A^3+27B^2\).  If \(P'\) were
torsion, we would therefore have
\[
 16m^2n^2S^2\mid256D^6,
\]
or
\begin{equation}\label{eq:nagell-divisibility}
 m^2n^2S^2\mid16D^6.
\end{equation}

Since \(\gcd(m,n)=1\),
\[
 \gcd(m,D)=\gcd(n,D)=1.
\]
Equation \eqref{eq:nagell-divisibility} forces
\(m^2\mid16\) and \(n^2\mid16\).  Thus
\(\abs m,\abs n\in\{1,2,4\}\).  Coprimality and
\(\abs m\ne\abs n\) leave only cases in which one absolute value is
\(1\) and the other is \(2\) or \(4\).  In all of them \(S>1\) is
odd, while
\[
 \gcd(S,D)\mid2,\qquad \gcd(S,16)=1.
\]
Hence \(S^2\) is coprime to the right side of
 \eqref{eq:nagell-divisibility}, a contradiction.  Thus \(P'\), and
 therefore \(P_k\), has infinite order.  The identity \(2Q_k=P_k\)
 gives the same conclusion for \(Q_k\).
\end{proof}

\begin{theorem}[Fixed-fiber infinitude]
\label{thm:fixed-fiber-infinitude}
For every \(k\in\Q\setminus\{0,\pm1\}\), there are infinitely many
pairwise nonsimilar scalene rational Heron triangles with exactly two
square sides on the fixed \(k\)-fiber.  Each can be multiplied by a
rational square so that its sides are integers, exactly two of them
are integer squares, and its area is an integer.
\end{theorem}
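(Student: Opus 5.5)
The plan is to combine the positive rank from Theorem~\ref{thm:uniform-nontorsion} with a real-density argument that produces infinitely many admissible points. I would then discard the finitely many points giving isosceles or all-square triangles, and finally rescale.

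\emph{Step 1: infinitely many admissible points.} Fix \(k\in\Q\setminus\{0,\pm1\}\) and work on \(E_b\), which is isomorphic to \(E_k\) over \(\Q\) by Proposition~\ref{prop:elliptic-surface}. By Theorem~\ref{thm:uniform-nontorsion}, \(Q_k\) has infinite order. The cubic \(x^3+4b^2x\) has the single real root \(x=0\), so \(E_b(\R)\) is connected and hence isomorphic to the circle group \(\R/\Z\). An infinite cyclic subgroup of a circle group is dense, so \(\{nQ_k\}\) is dense in \(E_b(\R)\). The admissibility condition \eqref{eq:admissibility}, \(\rho_{a,b}=a+x/4-b^2/x>0\), defines an open subset of \(E_b(\R)\setminus\{O,(0,0)\}\). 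This subset is nonempty, since it contains every real point with \(x\) sufficiently large. Consequently infinitely many multiples \(nQ_k\) are admissible. By Theorem~\ref{thm:natural-elliptic-correspondence} and Remark~\ref{rem:marked-quotient}, they yield infinitely many marked classes. Forgetting the marking is finite-to-one, so we obtain infinitely many pairwise nonsimilar triangles.

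\emph{Step 2: removing bad points.} Next I would show that only finitely many admissible points give a triangle that is isosceles or has three square sides.

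\begin{itemize}
\item The normalized triangle \((1,q^2,r)\) is isosceles exactly when \(q^2=1\), \(r=1\), or \(r=q^2\).
\item \(q=\pm1\) forces \(v^2=1-b^2\), which gives finitely many \(v\).
\item \(r=1\) fixes \(v=1-a\), and \(r=q^2\) gives a polynomial equation in \(v\) after substituting \(q^4=v^2+b^2\). In either case only finitely many \(x\) arise, because \(v\) is a nonconstant rational function of \(x\).
\item \(r=z^2\) a square puts the point on the curve \(q^4=z^4-2az^2+1\). This curve is smooth of genus \(3\) for \(b\ne0\) (the quartic \(C_{a,b}\) of Theorem~B), so Faltings' theorem makes its rational points finite. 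Each such point comes from at most two points of \(E_b\).
\end{itemize}

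After removing these finitely many exceptions, infinitely many admissible classes remain scalene with exactly two square sides.

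\emph{Step 3: integral scaling.} For a class written as \((1,(u/v)^2,w/z)\), take \(s=vz\) and multiply all lengths by \(s^2\). This makes all three sides integral, and the first two become the integer squares \(s^2\) and \((su/v)^2\). The third side stays a nonsquare, since scaling by a rational square preserves squareness. The area remains rational, so it is an integer by Lemma~\ref{lem:integer-sides-rational-area}. Primitivity is not required here, so Theorem~\ref{thm:primitive-scaling} is not needed.

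\emph{Main obstacle.} The delicate point is Step 1: one must make sure the admissible region really meets the closure of \(\langle Q_k\rangle\) for every \(k\), including \(k<0\), where \(Q_k\) itself is not admissible because \(r=4k/h<0\). The connectedness of \(E_b(\R)\) settles this uniformly, since the closure of \(\langle Q_k\rangle\) is then all of \(E_b(\R)\). That connectedness is what I would verify carefully, together with the smoothness of \(C_{a,b}\) for all \(b\ne0\) used in the Faltings step.
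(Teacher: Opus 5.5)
Your proposal is correct and follows essentially the same route as the paper: density of the multiples of an infinite-order point in the connected real locus $E(\R)\simeq\R/\Z$, removal of the finitely many isosceles points and (via Faltings on the smooth quartic $C_{a,b}$) the finitely many all-square points, and then scaling by the square of a multiple of the product of the two denominators, with integrality of the area from Lemma~\ref{lem:integer-sides-rational-area}. The differences are cosmetic. You use $Q_k$ on $E_b$ where the paper uses $P_k=2Q_k$ on $E_k$. For the case $r=q^2$ you should say explicitly that the resulting equation $2av=b^2-a^2$ is nontrivial, since it has no solution when $a=0$; the paper checks the same thing in the form $2aq^2=1$.
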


\begin{proof}
Put \(d=k^4-1\ne0\).  The cubic
\[
 X^3+4d^2X=X(X^2+4d^2)
\]
has exactly one real root.  Hence \(E_k(\mathbb R)\) is connected and,
as a compact real Lie group, is a circle.  By
Theorem~\ref{thm:uniform-nontorsion}, \(P_k\) has infinite order.
The closure of \(\langle P_k\rangle\) is an infinite closed subgroup
of a circle and is therefore the whole circle.

Consider the real open set
\[
\mathcal U_k=
\left\{(X,Y)\in E_k(\mathbb R):
 X\ne0,\ 
 \frac{2k}{k^2+1}
 +\frac{X}{4(k^2+1)^2}
 -\frac{(k^2-1)^2}{X}>0
\right\}.
\]
It is nonempty: for all sufficiently large positive \(X\), the
displayed expression is positive and the curve has two real points
above \(X\).  Density gives infinitely many rational multiples of
\(P_k\) in \(\mathcal U_k\).

Only finitely many of these points give a non-scalene triangle.  In
the quartic model:
\[
\begin{array}{ll}
q^2=1 &\Longrightarrow\ v^2=a^2,\\
r=1 &\Longrightarrow\ q^4=2-2a,\\
r=q^2 &\Longrightarrow\ 2aq^2=1,
\end{array}
\]
with the last equation having no solution when \(a=0\).  Each row
therefore contributes only finitely many points.  Removing them
leaves infinitely many scalene marked classes.  The sign quotient and
the finite set of edge permutations cannot turn an infinite set into
a finite one.

For this fixed \(k\), the corresponding \(a,b\) are fixed.  By
Corollary~\ref{cor:fixed-fiber-finiteness}, only finitely many of the
remaining classes have a third square side.  Removing this finite set
leaves infinitely many scalene classes with exactly two square sides.

Finally, write \(q=u/v\) and \(r=w/z\) in lowest terms.  Choose a
positive integer \(s\) divisible by \(vz\).  Scaling by \(s^2\) gives
\[
 s^2,\qquad \left(\frac{su}{v}\right)^2,\qquad
 \frac{s^2w}{z},
\]
which are integral and retain the first two square sides.  The third
side remains nonsquare: if \(s^2r\) were a rational square, then so
would \(r\).  The area is rational and hence integral by
Lemma~\ref{lem:integer-sides-rational-area}.  No primitivity assertion
is made here; that issue is governed by
Theorem~\ref{thm:primitive-scaling}.
\end{proof}

\subsection{The equal-square isosceles branch}

\begin{theorem}[Primitive isosceles classification]
\label{thm:primitive-isosceles-classification}
Every nondegenerate rational Heron isosceles triangle whose equal
sides are squares is similar to
\begin{equation}\label{eq:isosceles-normal-form}
 \left(1,1,\frac{4k}{k^2+1}\right),
 \qquad k\in\Q_{>0},\quad k\ne1.
\end{equation}
The parameters \(k\) and \(1/k\) give the same class; every such
class is represented uniquely if one imposes \(k>1\).
Write \(k=m/n\) with coprime positive integers \(m,n\), and put
\[
 g=\gcd(4,m^2+n^2)=
 \begin{cases}
 1,&m,n\text{ of opposite parity},\\
 2,&m,n\text{ both odd}.
 \end{cases}
\]
The class has a primitive integral representative with equal square
sides if and only if
\begin{equation}\label{eq:isosceles-square-condition}
 {\frac{m^2+n^2}{g}=d^2}
\end{equation}
for some \(d\in\Z_{>0}\).  In that case its unique primitive
representative, up to edge order, is
\begin{equation}\label{eq:isosceles-primitive}
 {\left(d^2,d^2,\frac{4mn}{g}\right),}
\end{equation}
with area
\begin{equation}\label{eq:isosceles-area}
 {\mathcal A=\frac{2mn\abs{m^2-n^2}}{g^2}.}
\end{equation}
\end{theorem}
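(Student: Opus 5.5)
We normalize the two equal square sides to length \(1\), parametrize the admissible bases by a rational point on a conic, and then apply Theorem~\ref{thm:primitive-scaling} with \(q=1\).

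\textbf{Normal form.} Scaling by the inverse of the common square side gives a similar triangle \((1,1,r)\) with \(r>0\). Its height onto the base is \(\sqrt{1-r^2/4}\), so the area is
\[
 \mathcal A_0=\frac r2\sqrt{1-\frac{r^2}{4}} .
\]
This is rational exactly when \((r/2,c)\) is a rational point on \(X^2+Y^2=1\), where \(c=\sqrt{1-r^2/4}\). Nondegeneracy means \(0<r<2\), hence \(c\neq0\). The standard rational parametrization of the circle through \((0,-1)\) gives
\[
 \frac r2=\frac{2k}{k^2+1},\qquad c=\pm\frac{k^2-1}{k^2+1},\qquad k\in\Q .
\]
Positivity of \(r\) forces \(k>0\), and \(c\neq0\) forces \(k\ne1\). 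This is the same point \((a,b)\) as in \eqref{eq:k-parametrization}, matching Proposition~\ref{prop:isosceles-section}.

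The map \(k\mapsto 4k/(k^2+1)\) is invariant under \(k\mapsto1/k\). It is strictly decreasing on \((1,\infty)\), since its derivative is \(4(1-k^2)/(k^2+1)^2\). Hence each class has exactly one parameter \(k>1\).

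\textbf{Primitive criterion.} Write \(k=m/n\) with \(\gcd(m,n)=1\), so that \(r=4mn/(m^2+n^2)\). The key arithmetic step is to put \(r\) in lowest terms.

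\begin{itemize}
\item Since \(\gcd(m,n)=1\), every prime dividing \(mn\) is coprime to \(m^2+n^2\). Hence \(\gcd(4mn,m^2+n^2)=\gcd(4,m^2+n^2)\).
\item If \(m,n\) have opposite parity, then \(m^2+n^2\) is odd and this gcd is \(1\).
\item If \(m,n\) are both odd, then \(m^2+n^2\equiv2\pmod4\) and the gcd is \(2\).
\end{itemize}

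This justifies the formula for \(g\), and gives \(w=4mn/g\) and \(z=(m^2+n^2)/g\).

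Now apply Theorem~\ref{thm:primitive-scaling} to the marked class \((1,(1/1)^2,w/z)\), so \(u=v=1\). Condition \eqref{eq:primitive-criterion} reads \(z/\gcd(z,1)=z=d^2\), which is exactly \eqref{eq:isosceles-square-condition}. In that case \(c_p=\lceil \ord_p(z)/2\rceil=\ord_p(d)\), so \(s=d\). Formula \eqref{eq:primitive-representative} then becomes
\[
 \left(d^2,\ d^2,\ \frac{d^2w}{z}\right)=\left(d^2,\ d^2,\ \frac{4mn}{g}\right),
\]
which is \eqref{eq:isosceles-primitive}.

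\textbf{Uniqueness.} Theorem~\ref{thm:primitive-scaling} gives uniqueness within a marked class. The only remaining choice of marking among the equal sides is to swap the two equal sides. This is an edge permutation, so the representative is unique up to edge order.

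\textbf{Area.} The normalized area is
\[
 \mathcal A_0=\frac{2mn}{m^2+n^2}\cdot\frac{\abs{m^2-n^2}}{m^2+n^2}.
\]
Scaling lengths by \(s^2=d^2\) multiplies the area by \(d^4=(m^2+n^2)^2/g^2\). This gives \eqref{eq:isosceles-area}.

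\textbf{Expected difficulty.} The only delicate point is the lowest-terms computation for \(r\), in particular the parity analysis that yields \(g\). Everything else is a direct substitution into Theorem~\ref{thm:primitive-scaling}.
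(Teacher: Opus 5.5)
Your proposal is correct and follows essentially the same route as the paper: normalize the equal sides to $1$, parametrize the base via rational points on the unit circle, reduce $4mn/(m^2+n^2)$ to lowest terms, apply Theorem~\ref{thm:primitive-scaling} with $u=v=1$ (so $s=d$), and compute the area. You spell out two details the paper leaves implicit, namely the gcd computation giving $g$ and the monotonicity argument for uniqueness with $k>1$. Your area computation, scaling the normalized area by $d^4$ rather than taking half-base times altitude in the scaled triangle, is an inessential variation.
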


\begin{proof}
After normalizing the equal sides to \(1\), rationality of the area
is equivalent to rationality of the altitude.  If the base is \(r\),
then
\[
 \left(\frac r2\right)^2+t^2=1.
\]
The rational parametrization of the unit circle gives
\[
 \frac r2=\frac{2k}{k^2+1},
\]
which proves \eqref{eq:isosceles-normal-form}; \(k=1\) is the
degenerate value \(r=2\).

For \(k=m/n\), the base in lowest terms is
\[
 \frac{4mn/g}{(m^2+n^2)/g}.
\]
Apply Theorem~\ref{thm:primitive-scaling} with \(q=1\), so the
denominator \(v\) of the second square root is \(1\).  A
square-preserving primitive representative exists exactly when the
reduced base denominator is a square, which is
\eqref{eq:isosceles-square-condition}.  The minimal square scaling is
\(d^2\), giving \eqref{eq:isosceles-primitive}.

Half the base is \(2mn/g\), and the altitude is
\[
 \sqrt{d^4-\frac{4m^2n^2}{g^2}}
 =\frac{\abs{m^2-n^2}}{g}.
\]
Their product is \eqref{eq:isosceles-area}.
\end{proof}

\begin{corollary}[Two primitive isosceles families]
\label{cor:two-isosceles-families}
Let \(u>v>0\) be coprime and of opposite parity.  All primitive
triangles in Theorem~\ref{thm:primitive-isosceles-classification}
occur, allowing the usual interchange of the two legs in a primitive
Pythagorean parametrization, in the two families
\begin{equation}\label{eq:isosceles-family-one}
 {\bigl((u^2+v^2)^2,(u^2+v^2)^2,
              8uv(u^2-v^2)\bigr)}
\end{equation}
and
\begin{equation}\label{eq:isosceles-family-two}
 {\bigl((u^2+v^2)^2,(u^2+v^2)^2,
              2\abs{u^4-6u^2v^2+v^4}\bigr).}
\end{equation}
Their areas are
\begin{equation}\label{eq:isosceles-family-area}
 4uv\abs{u^2-v^2}\,
 \abs{u^4-6u^2v^2+v^4}.
\end{equation}
\end{corollary}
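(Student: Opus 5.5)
The plan is to start from Theorem~\ref{thm:primitive-isosceles-classification} and solve the condition \eqref{eq:isosceles-square-condition}, \((m^2+n^2)/g=d^2\), with \(m,n\) coprime positive integers. I split into the two parity cases singled out there, since they should produce the two families.

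\emph{Case \(g=1\)} (\(m,n\) of opposite parity). Here \(m^2+n^2=d^2\) with \(\gcd(m,n)=1\), so \((m,n,d)\) is a primitive Pythagorean triple. Hence, after possibly interchanging the legs,
\[
 m=u^2-v^2,\qquad n=2uv,\qquad d=u^2+v^2,
\]
with \(u>v>0\) coprime and of opposite parity. Substituting into \eqref{eq:isosceles-primitive} gives the base \(4mn=8uv(u^2-v^2)\) and equal sides \(d^2=(u^2+v^2)^2\). This is \eqref{eq:isosceles-family-one}, and interchanging the legs gives the same base.

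\emph{Case \(g=2\)} (\(m,n\) both odd). Now \(m^2+n^2=2d^2\). Put \(A=(m+n)/2\) and \(B=\lvert m-n\rvert/2\). These are integers with \(A^2+B^2=d^2\). They are coprime, because any common divisor divides both \(m\) and \(n\). They have opposite parity, since \(A+B=\max(m,n)\) is odd. So \((A,B,d)\) is again a primitive Pythagorean triple, and we may write
\[
 \{A,B\}=\{u^2-v^2,\,2uv\},\qquad d=u^2+v^2.
\]
Then \(mn=A^2-B^2\), and the base \(4mn/g=2\lvert A^2-B^2\rvert\) becomes \(2\lvert(u^2-v^2)^2-4u^2v^2\rvert=2\lvert u^4-6u^2v^2+v^4\rvert\). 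Which of \(A,B\) is the even leg only affects the sign, hence the absolute value. This gives \eqref{eq:isosceles-family-two}.

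For the areas I use \eqref{eq:isosceles-area}, or compute directly as half base times altitude. In the first family, the altitude is \(\lvert m^2-n^2\rvert=\lvert u^4-6u^2v^2+v^4\rvert\) and half the base is \(4uv(u^2-v^2)\). In the second family, half the base is \(\lvert u^4-6u^2v^2+v^4\rvert\) and the altitude is \(\lvert m^2-n^2\rvert/2=2AB=4uv\lvert u^2-v^2\rvert\). Both products equal \eqref{eq:isosceles-family-area}.

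The converse direction is routine. Every coprime opposite-parity pair \(u>v\) yields admissible \(m,n\) in each case: in the second case take \(m=A+B\) and \(n=\lvert A-B\rvert\), which are odd and coprime, and \(m\neq n\) holds since \(B\ne0\). The main care point is the bookkeeping in the \(g=2\) case, namely checking the coprimality and parity of \(A,B\) and keeping track of the absolute values. I expect the rest to be direct substitution.
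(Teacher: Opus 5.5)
Your proposal is correct and follows the paper's proof essentially step for step. It uses the same split by $g$, the same primitive Pythagorean parametrization when $g=1$, the same substitution $A=(m+n)/2$, $B=\lvert m-n\rvert/2$ when $g=2$, and the same converse choice $(m,n)=(u^2-v^2+2uv,\lvert u^2-v^2-2uv\rvert)$. The differences are minor. You also check the areas directly as half-base times altitude. You also do not state explicitly that $u^2-v^2\neq 2uv$, which is needed for $k\neq1$ in the first family and for $n>0$ in the second. It follows at once from your parity observation, since $u^2-v^2$ is odd and $2uv$ is even; the paper instead notes that equality would make $u/v$ irrational.
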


\begin{proof}
If \(g=1\), condition \eqref{eq:isosceles-square-condition} is the
primitive Pythagorean equation
\[
 m^2+n^2=d^2.
\]
Thus, up to interchanging \(m,n\),
\[
 m=u^2-v^2,\qquad n=2uv,\qquad d=u^2+v^2,
\]
and \eqref{eq:isosceles-primitive} gives
\eqref{eq:isosceles-family-one}.

If \(g=2\), then \(m,n\) are odd and
\[
 m^2+n^2=2d^2.
\]
Put
\[
 A=\frac{m+n}{2},\qquad B=\frac{\abs{m-n}}2.
\]
The integers \(A,B\) are coprime and of opposite parity, and
\(A^2+B^2=d^2\).  Hence, allowing their interchange,
\[
 A=u^2-v^2,\qquad B=2uv,\qquad d=u^2+v^2.
\]
 Because
\[
 \frac{4mn}{g}=2\abs{A^2-B^2}
 =2\abs{u^4-6u^2v^2+v^4},
\]
we get \eqref{eq:isosceles-family-two}.  Formula
 \eqref{eq:isosceles-family-area} follows from
 \eqref{eq:isosceles-area}.

 Conversely, every coprime opposite-parity pair \(u>v>0\) produces a
 nondegenerate primitive triangle in each displayed family.  For
 \eqref{eq:isosceles-family-one}, take
 \((m,n)=(u^2-v^2,2uv)\), allowing their interchange.  For
 \eqref{eq:isosceles-family-two}, take
 \[
  (m,n)=\bigl(u^2-v^2+2uv,\,
              \abs{u^2-v^2-2uv}\bigr).
 \]
 These pairs satisfy the corresponding coprimality and parity
 hypotheses; none of the displayed positive quantities vanishes,
 since that would give an irrational value for \(u/v\).
 Theorem~\ref{thm:primitive-isosceles-classification} then proves
 nondegeneracy and primitivity.  For example, \((u,v)=(4,1)\) in the
 second family gives \((17^2,17^2,322)\).
\end{proof}

\Needspace{26\baselineskip}
\section{An explicit primitive scalene family and its counting law}
\label{sec:primitive-family}

\begin{theorem}[A primitive family with exactly two square sides]
\label{thm:low-degree-primitive-family}
Let \(m>n>0\) be coprime and define
\begin{equation}\label{eq:LMR}
\begin{aligned}
 L&=2mn(m^2+n^2),\\
 M&=m^4+n^4,\\
 R&=-m^8-n^8+8m^5n^3+6m^4n^4+8m^3n^5.
\end{aligned}
\end{equation}
Assume
\begin{equation}\label{eq:positive-range}
 {\frac mn+\frac nm<1+\sqrt3.}
\end{equation}
Set
\begin{equation}\label{eq:epsilon}
 \varepsilon=
 \begin{cases}
 1,&m,n\text{ of opposite parity},\\
 4,&m,n\text{ both odd}.
 \end{cases}
\end{equation}
Then
\begin{equation}\label{eq:low-family}
 {\left(
 \frac{L^2}{\varepsilon},
 \frac{M^2}{\varepsilon},
 \frac{R}{\varepsilon}\right)}
\end{equation}
is a primitive scalene Heron triangle with exactly two square sides.
Its area is
\begin{equation}\label{eq:low-family-area}
 {\mathcal A_{m,n}
 =\frac{
 2m^2n^2(m^2+n^2)\abs{m^2-n^2}R
 }{\varepsilon^2}.}
\end{equation}
\end{theorem}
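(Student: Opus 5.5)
The plan is to reduce everything to the normal form of Proposition~\ref{prop:marked-normal-form} with the parameter \(k=m/n\). Dividing by \(L^2/\varepsilon\) gives the marked class
\[
 \left(1,\left(\frac ML\right)^2,\frac{R}{L^2}\right).
\]
The area formula \eqref{eq:low-family-area} suggests that the base-vertex coordinate is
\[
 b=\frac{\abs{m^2-n^2}}{m^2+n^2},\qquad a=\frac{2mn}{m^2+n^2},
\]
which is exactly \eqref{eq:k-parametrization} with \(k=m/n\) (up to the sign of \(b\)). So the first step is to check the polynomial identity
\[
 M^4=R^2-2aRL^2+L^4,\qquad aL^2=8m^3n^3(m^2+n^2).
\]
This is \eqref{eq:normal-form} after clearing denominators and is a routine expansion in \(m,n\). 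Proposition~\ref{prop:marked-normal-form} then produces a rational Heron triangle as soon as \(R>0\). Its area is \(\abs b\,RL^2/2\) divided by \(\varepsilon^2\), which gives \eqref{eq:low-family-area}; integrality of the area then also follows from Lemma~\ref{lem:integer-sides-rational-area}. (Presumably the triple comes from a small multiple such as \(Q_k\pm P_k\) on \(E_k\), but the direct identity avoids needing this.)

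For positivity, put \(t=m/n+n/m\ge2\). Dividing by \(m^4n^4\) gives
\[
 \frac{R}{m^4n^4}=-(t^2-2)^2+8t+8.
\]
This expression vanishes at \(t=1+\sqrt3\) and is strictly decreasing for \(t\ge2\), since \(4t(t^2-2)>8\) there. Hence \(R>0\) is exactly \eqref{eq:positive-range}.

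For integrality and primitivity, let \(p\) be an odd prime dividing both \(L\) and \(M\). If \(p\mid mn\), then coprimality gives \(p\nmid M\). Otherwise \(p\mid m^2+n^2\), and \(M=(m^2+n^2)^2-2m^2n^2\) forces \(p\mid 2\). Thus only \(p=2\) can divide all three sides.

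If \(m,n\) have opposite parity, then \(M\) is odd, so \(\varepsilon=1\) and the triple is primitive.

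If \(m,n\) are both odd, then:
\begin{itemize}
\item \(4\mid L\) and \(M\equiv2\pmod 4\), so \(L^2/4\) is even while \(M^2/4=(M/2)^2\) is odd, and both are still integer squares;
\item a congruence computation modulo \(32\), using \(m^8\equiv n^8\equiv1\), \(m^4n^4\equiv1\pmod{16}\) and \(m^2+n^2\equiv2\pmod 8\), gives \(R\equiv20\pmod{32}\), so \(R/4\) is an integer and is odd.
\end{itemize}
This yields primitivity in that case as well. Alternatively one could invoke Theorem~\ref{thm:primitive-scaling}, but the direct gcd argument is cleaner.

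The main obstacle is the phrase \emph{exactly two square sides}, i.e.\ showing \(R/\varepsilon\) is never a square. Fixed-fiber finiteness does not help, since \(k\) varies along the family, so I would use congruences.
\begin{itemize}
\item If \(n\) is even, every other term of \(R\) is divisible by \(2^{8}\), so \(R\equiv -m^8\equiv-1\pmod{16}\). The symmetric argument applies when \(m\) is even. In either case \(R\) is not a square.
\item If \(m,n\) are both odd, the computation above gives \(R/4\equiv5\pmod 8\), again a nonsquare.
\end{itemize}

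Finally, scalene-ness. The condition \(L\ne M\) holds because \(M-L=(m-n)^4\)-type positivity: indeed
\[
 M-L=m^4+n^4-2m^3n-2mn^3,
\]
and I would rule out equality by irrationality of the resulting ratio \(m/n\), exactly as in Corollary~\ref{cor:two-isosceles-families}. The third side differs from the other two because \(R/\varepsilon\) is a nonsquare while they are squares.
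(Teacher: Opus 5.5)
Your proposal is correct and is essentially the paper's proof. The paper obtains the normal-form identity by substituting \(P_k=(4k^2,4k(k^4+1))\) into the inverse triangle map rather than by expanding, so the family comes from \(P_k=2Q_k\) itself and not from \(Q_k\pm P_k\). It also proves positivity through the factorization \(R=-m^4n^4(t^2+2t+2)(t^2-2t-2)\) rather than by monotonicity; your gcd, mod-\(32\), nonsquare and rational-root steps coincide with the paper's.

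Two small slips need fixing, and neither is load-bearing. First, for \(n\) even the terms \(6m^4n^4\) and \(8m^5n^3\) are only guaranteed divisible by \(2^5\) and \(2^6\), not \(2^8\); this still gives \(R\equiv-1\pmod{16}\). Second, \(M-L=m^2n^2(t^2-2t-2)\) is negative, not positive, on the admissible range. That sign in fact gives \(L\ne M\) directly from \eqref{eq:positive-range}.
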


\begin{proof}
Take \(k=m/n\) and the point
\[
 P_k=(4k^2,4k(k^4+1))
\]
of Proposition~\ref{prop:isosceles-section}.  Substitution in
\eqref{eq:scaled-inverse} gives
\[
 q=\frac{m^4+n^4}{2mn(m^2+n^2)}=\frac ML
\]
and
\[
 r=\frac{R}{4m^2n^2(m^2+n^2)^2}=\frac{R}{L^2}.
\]
Thus the normalized sides are
\[
 1,\qquad \left(\frac ML\right)^2,\qquad \frac{R}{L^2},
\]
and multiplication by \(L^2\) gives \((L^2,M^2,R)\).

For positivity, put
\[
 T=\frac mn+\frac nm=\frac{m^2+n^2}{mn}.
\]
Direct factorization gives
\begin{equation}\label{eq:R-factorization}
 {R=-m^4n^4(T^2+2T+2)(T^2-2T-2).}
\end{equation}
Since \(T\ge2\), the first quadratic factor is positive and
\[
 R>0\quad\Longleftrightarrow\quad T<1+\sqrt3.
\]
This is \eqref{eq:positive-range}.  Also
\[
 b=\frac{m^2-n^2}{m^2+n^2}\ne0.
\]
The coordinate construction in
Proposition~\ref{prop:marked-normal-form} therefore proves all strict
triangle inequalities.

The normalized area is
\[
 \frac{\abs b r}{2}
 =\frac{\abs{m^2-n^2}R}
 {2(m^2+n^2)L^2}.
\]
Scaling lengths by \(L^2\) scales area by \(L^4\).  After the
integrality verification below, division of all three sides by
\(\varepsilon\) divides the area by \(\varepsilon^2\), yielding
\eqref{eq:low-family-area}.

We next prove primitivity.  One has
\begin{equation}\label{eq:gcd-LM}
 \gcd(L,M)=
 \begin{cases}
 1,&m,n\text{ of opposite parity},\\
 2,&m,n\text{ both odd}.
 \end{cases}
\end{equation}
Indeed, if an odd prime \(p\) divided both \(L\) and \(M\), then
\(p\mid m\) or \(p\mid n\) would immediately contradict
\(M=m^4+n^4\) and \(\gcd(m,n)=1\).  If instead
\(p\mid m^2+n^2\), then
\[
 M\equiv2n^4\not\equiv0\pmod p,
\]
again a contradiction.  The power of \(2\) in
\eqref{eq:gcd-LM} follows directly from parity.

When \(m,n\) have opposite parity, the first two sides \(L^2,M^2\)
are already coprime.  When both are odd,
\[
 \gcd(L,M)=2,\qquad
 \ord_2(L)=2,\qquad \ord_2(M)=1,
\]
and the congruence computation below gives
\[
 R\equiv20\pmod{32}.
\]
Thus
\[
 \ord_2(L^2)=4,\qquad
 \ord_2(M^2)=\ord_2(R)=2.
\]
There is no common odd prime because \(\gcd(L,M)=2\), so the three
sides have greatest common divisor exactly \(4\).  After division by
\(4\), the first two sides are
\((L/2)^2,(M/2)^2\), with coprime square roots.  This proves
primitivity in both cases.
The displayed area is rational, and the resulting sides are
integers; hence Lemma~\ref{lem:integer-sides-rational-area} also shows
that the area in \eqref{eq:low-family-area} is an integer.

It remains to show that the third side is not a square.  If \(m,n\)
have opposite parity, every term in \(R\) except one negative eighth
power vanishes modulo \(8\), so
\[
 R\equiv7\pmod8.
\]
If \(m,n\) are both odd, then modulo \(32\),
\[
 m^8\equiv n^8\equiv1,\qquad
 6m^4n^4\equiv6,
\]
while
\[
 8m^5n^3+8m^3n^5
 =8m^3n^3(m^2+n^2)\equiv16.
\]
Consequently,
\[
 R\equiv20\pmod{32},\qquad \frac R4\equiv5\pmod8.
\]
Neither possible third side is a square.

Finally, the first two square sides are unequal.  If \(L=M\), then
\(k=m/n\) is a rational root of
\[
 k^4-2k^3-2k+1=0.
\]
The rational-root theorem leaves only \(\pm1\), neither of which is a
root.  Since the third side is not a square, it is unequal to either
of the first two sides.  The triangle is therefore scalene and has
exactly two square sides.
\end{proof}

\begin{example}\label{ex:small-low-family}
The first two convenient specializations are
\[
\begin{array}{ccl}
(m,n)=(2,1)
&\Longrightarrow&(20^2,17^2,159),
\qquad \mathcal A=19080,\\[2mm]
(m,n)=(3,2)
&\Longrightarrow&(156^2,97^2,23423),
\qquad \mathcal A=109619640.
\end{array}
\]
\end{example}

\begin{theorem}[Counting lower bound]\label{thm:counting-lower-bound}
Let \(N(X)\) be the number of unmarked primitive scalene Heron
triangles with exactly two square sides and largest side at most
\(X\).  Then, as \(X\to\infty\),
\begin{equation}\label{eq:counting-asymptotic}
 {
 N(X)\ge
 \left(\frac{3}{\pi^2\,416^{1/4}}+o(1)\right)X^{1/4}.}
\end{equation}
In particular, \(N(X)\gg X^{1/4}\).
\end{theorem}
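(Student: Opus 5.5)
The plan is to count the members of the family in Theorem~\ref{thm:low-degree-primitive-family}. That theorem already shows each member is a primitive scalene Heron triangle with exactly two square sides. So $N(X)$ is bounded below by the number of admissible pairs $(m,n)$ whose triangle has largest side at most $X$, provided distinct pairs give distinct unmarked triangles. The proof therefore has three steps: injectivity, a uniform bound on the largest side, and a lattice-point count.

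\emph{Injectivity.} Put $T=m/n+n/m$. Then
\[
\frac ML=\frac{m^4+n^4}{2mn(m^2+n^2)}=\frac{T^2-2}{2T}.
\]
This is strictly increasing for $T\ge2$. At the endpoint $T=1+\sqrt3$ it equals $1$, so $M<L$ throughout the range \eqref{eq:positive-range}. Hence in an unmarked triangle of the family the larger square side is $L^2/\varepsilon$ and the smaller is $M^2/\varepsilon$, and the ratio $M/L$ is read off from the unordered triple. This ratio determines $T$, and $T$ together with $m>n$ determines $k=m/n$. Since $\gcd(m,n)=1$ and $m>n>0$, the ratio $k$ determines the pair $(m,n)$. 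So the map from pairs to unmarked triangles is injective.

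\emph{Bounding the largest side.} The side $R$ is smaller than $L^2$: near the upper endpoint of the range $R$ is small, and on the whole interval one expects $R<L^2$, which I would check directly as a polynomial inequality in $k$ on $1<k<\kappa$. Here $\kappa$ is the root of $k+1/k=1+\sqrt3$. The largest side is therefore $L^2/\varepsilon\le L^2=4m^2n^2(m^2+n^2)^2$. To reach the stated constant I would bound $L^2$ by a fixed multiple of $n^8$ on a chosen interval of $k$. For example, if $m<cn$ then $L^2<4c^2(c^2+1)^2n^8$. The constant $416^{1/4}$, together with the factor $3/\pi^2$, should come from such a crude choice. I would select the interval $1<k<c$ so that $4c^2(c^2+1)^2$ and the interval length combine to the displayed constant, adjusting the choice of $c$ if needed.

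\emph{Lattice-point count.} Restrict to $n\le (X/C)^{1/8}$ and $n<m<cn$, where $C=4c^2(c^2+1)^2$, and count coprime pairs. I would either keep all coprime pairs, using the $\varepsilon$-division only to help, or restrict to opposite parity; the density is $6/\pi^2$ or $4/\pi^2$ respectively. The standard Möbius estimate $\sum_{n\le Y}\#\{m\in(n,cn):\gcd(m,n)=1\}\sim (c-1)\tfrac{3}{\pi^2}Y^2$ then gives the count, with $Y^2=(X/C)^{1/4}$. I expect the main obstacle to be reconciling the exponent: the largest side has degree $8$, which naively gives $X^{1/4}$ from $Y^2$. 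This matches the theorem, but the specific constant $416^{1/4}$ requires the right choice of subinterval and bound for $L^2$. I would try $c$ close to $\kappa$ first, then verify that $R\le L^2$ holds there, and finally conclude \eqref{eq:counting-asymptotic}.
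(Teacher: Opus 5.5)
Your overall route is the paper's. You take the family of Theorem~\ref{thm:low-degree-primitive-family}, prove injectivity through the ratio of the square roots of the two square sides, and count coprime pairs with $n<m<cn$. Your formula $M/L=(T^2-2)/(2T)$ is correct: it is increasing in $T$ and equals $1$ at $T=1+\sqrt3$. It even gives injectivity on the whole admissible range, whereas the paper only uses $1<k<2$ and $\eta'(k)>0$. The coprime-pair count is also fine.

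\textbf{The gap is in bounding the largest side.} The claim $R<L^2$ on all of $1<k<\kappa$ is false. By \eqref{eq:R-factorization},
\[
 R=m^4n^4\,(-T^4+4T^2+8T+4),\qquad L^2=4T^2m^4n^4 .
\]
So $R<L^2$ is equivalent to $T^4>8T+4$, and this fails for $2<T<2.14$, that is, for $k$ near $1$. Concretely, $(m,n)=(4,3)$ gives $L^2=360000$ but $R=397919$. The largest side of $(600^2,337^2,397919)$ is therefore the nonsquare side. So ``largest side $=L^2/\varepsilon$'' is wrong, and the polynomial check you planned would not go through.

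Your guess about the constant is also misplaced. At $c=2$ one has $4c^2(c^2+1)^2=400$, and $416$ does not come from $L^2$ at all.

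\textbf{The repair is to bound $R$ separately, which is exactly what the paper does.} With $1\le n\le B$ and $n<m<2n$, drop the negative terms:
\[
 R<8m^5n^3+6m^4n^4+8m^3n^5<(256+96+64)B^8=416B^8 .
\]
Together with $L^2<400B^8$ and $M^2<289B^8$, every member has largest side below $416B^8$. Taking $B=\lfloor (X/416)^{1/8}\rfloor$ and using $\sum_{n\le B}\varphi(n)\sim 3B^2/\pi^2$ then gives \eqref{eq:counting-asymptotic}.

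A sharper alternative: $-T^4+4T^2+8T+4$ is decreasing for $T\ge2$ with value $20$ at $T=2$. Hence $R<20m^4n^4<320n^8$ on $n<m<2n$, and you may use $400$ in place of $416$. Since the theorem is only a lower bound, any interval $n<m<cn$ and any valid bound $Cn^8$ on the largest side with $(c-1)C^{-1/4}\ge 416^{-1/4}$ suffice.
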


\begin{proof}
Restrict Theorem~\ref{thm:low-degree-primitive-family} to
\[
 1\le n\le B,\qquad n<m<2n,\qquad \gcd(m,n)=1.
\]
Then
\[
 \frac mn+\frac nm<\frac52<1+\sqrt3,
\]
so every pair is admissible.  Since \(m<2n\le2B\),
\[
 L<20B^4,\qquad M<17B^4,
\]
and
\[
\begin{aligned}
 R
 &<8m^5n^3+6m^4n^4+8m^3n^5\\
 &<(256+96+64)B^8=416B^8.
\end{aligned}
\]
Thus every resulting triangle has largest side at most \(416B^8\).

It remains to prove that the parameter pairs give different unmarked
triangles.  For \(k=m/n\in(1,2)\), the ratio of the smaller to the
larger square root of the two square sides is
\[
 \frac{\sqrt{M^2/\varepsilon}}
      {\sqrt{L^2/\varepsilon}}
 =\frac ML
 =\eta(k)=\frac{k^4+1}{2k(k^2+1)}.
\]
Here \(\eta(k)<\eta(2)=17/20<1\), and
\[
 \eta'(k)=
 \frac{(k^2-1)(k^4+4k^2+1)}
 {2k^2(k^2+1)^2}>0.
\]
Because the third side is not a square, \(\eta(k)\) is intrinsic to
the unmarked triangle.  Hence distinct reduced fractions \(k\) give
distinct triangles.

For each \(n\), writing \(m=n+j\) turns the parameter conditions into
\[
 1\le j<n,\qquad \gcd(j,n)=1.
\]
The number of pairs is therefore
\[
 \sum_{n\le B}\varphi(n)+O(1)
 =\frac{3}{\pi^2}B^2+O(B\log B);
\]
see, for example, \cite[Chapter~3]{Apostol}.  Taking
\[
 B=\left\lfloor\left(\frac{X}{416}\right)^{1/8}\right\rfloor
\]
gives
\eqref{eq:counting-asymptotic}.
\end{proof}

\section{The all-square genus-three cover}
\label{sec:genus-three}

We now impose the additional condition that the third normalized side
be a square.  Write \(r=z^2\), with \(z\in\Q^\times\).  Equation
\eqref{eq:normal-form} becomes
\begin{equation}\label{eq:genus-three-affine}
 {C_{a,b}:\quad q^4=z^4-2az^2+1,}
\end{equation}
where \(a^2+b^2=1\) and \(b\ne0\).

\begin{proposition}[Primitive representative of an all-square class]
\label{prop:all-square-primitive-representative}
Every rational similarity class of nondegenerate all-square Heron
triangles contains a unique primitive integral all-square
representative, up to edge order.
\end{proposition}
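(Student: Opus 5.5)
Within a similarity class of all-square triangles, every member is a positive rational multiple \(\lambda\) of a fixed representative, say with sides \(\alpha^2,\beta^2,\gamma^2\), where \(\alpha,\beta,\gamma\in\Q_{>0}\). The plan is to choose a common denominator \(N\) so that \(N\alpha,N\beta,N\gamma\) are positive integers. We then divide by \(g=\gcd(N\alpha,N\beta,N\gamma)\) and obtain coprime integers \(A,B,C\). The triangle \((A^2,B^2,C^2)\) is similar to the original, all its sides are integer squares, and \(\gcd(A^2,B^2,C^2)=1\) because a prime dividing all three squares divides \(A,B,C\). The area is rational, since rescaling multiplies it by \(\lambda^2\in\Q_{>0}\), and the sides are integers, so Lemma~\ref{lem:integer-sides-rational-area} shows the area is an integer. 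This proves existence.

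For uniqueness, I would suppose \((A^2,B^2,C^2)\) and \((A'^2,B'^2,C'^2)\) are both primitive integral all-square representatives of the same class, after matching edges by the similarity. Then \(A'^2=\lambda A^2\), \(B'^2=\lambda B^2\) and \(C'^2=\lambda C^2\) for some \(\lambda\in\Q_{>0}\). Unlike the two-square situation of Theorem~\ref{thm:primitive-scaling}, no extra hypothesis is needed here.

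The key point is that \(\lambda\) is a rational square, since \(\lambda=(A'/A)^2\). Write \(\lambda=(p/s)^2\) in lowest terms. Then \(s^2A'^2=p^2A^2\), and since \(\gcd(p,s)=1\) this forces \(s^2\mid A^2\), and likewise \(s^2\mid B^2\) and \(s^2\mid C^2\). Primitivity gives \(s=1\). By the same argument with the two triangles interchanged, \(p=1\). Hence \(\lambda=1\), and the two representatives coincide up to edge order.

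An alternative route is to invoke Theorem~\ref{thm:primitive-scaling} after marking two sides, but the direct valuation argument above is cleaner. I expect no real obstacle. The only subtle point is that the ordering ambiguity comes only from relabeling edges; an isosceles or equilateral configuration would merely reduce the number of distinct orderings and does not affect the argument.
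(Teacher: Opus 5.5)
Your proposal is correct and follows the paper's proof essentially step for step: you clear denominators by a common integer square, divide by $g^2$ with $g=\gcd(A,B,C)$, use Lemma~\ref{lem:integer-sides-rational-area} for integrality of the area, and show that a proportionality factor between two primitive triples must equal $1$. The only difference is cosmetic: you write $\lambda=(p/s)^2$ using the square structure, whereas the paper takes $\lambda=r/s$ in lowest terms directly, since two proportional primitive integral triples coincide whether or not their sides are squares.
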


\begin{proof}
Write the three rational square sides as \(x^2,y^2,z^2\), with
\(x,y,z\in\Q_{>0}\).  Multiplication of all lengths by a common
integer square clearing the denominators of \(x,y,z\) produces
\[
 (A^2,B^2,C^2),\qquad A,B,C\in\Z_{>0}.
\]
Let \(g=\gcd(A,B,C)\).  Dividing all three lengths by \(g^2\) leaves
integer square sides and makes their greatest common divisor
\[
 \gcd\left((A/g)^2,(B/g)^2,(C/g)^2\right)=1.
\]
The area remains rational under both similarity scalings, and is
therefore an integer by
Lemma~\ref{lem:integer-sides-rational-area}.  This proves existence.

For uniqueness, suppose two primitive integral triples in the same
ordered similarity class differ by a positive rational factor
\(\lambda=r/s\) in lowest terms.  Integrality forces \(s\) to divide
all sides of the first triple, hence \(s=1\); applying the inverse
scaling forces \(r=1\).  Thus \(\lambda=1\).  Forgetting the edge
order gives precisely the stated qualification.
\end{proof}

\begin{theorem}[All-square criterion]\label{thm:all-square-criterion}
Fix an ordered choice of the first two square sides and the oriented
angle parameter \((a,b)\).  The resulting marked rational similarity
classes of Heron triangles with three square sides are equivalent to
rational points \((z,q)\in C_{a,b}(\Q)\) with \(z\ne0\), modulo the
sign changes \(z\mapsto-z\) and \(q\mapsto-q\).  The normalized
triangle and area are
\[
 (1,q^2,z^2),\qquad \frac{\abs b z^2}{2}.
\]
Forgetting the marking is finite-to-one; the other ordered edge
markings generally belong to curves with different angle parameters.
\end{theorem}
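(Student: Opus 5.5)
The plan is to reduce everything to Proposition~\ref{prop:marked-normal-form} and Theorem~\ref{thm:natural-elliptic-correspondence}, adding only the quadratic lifting condition \(r=z^2\). First, take a marked class of Heron triangles with three square sides, marked on an ordered pair of them. By Proposition~\ref{prop:marked-normal-form} it has a representative \((1,q^2,r)\) with \(q\ne0\) and \(r>0\), together with the angle parameter \((a,b)\) satisfying \eqref{eq:normal-form}. Because the marked similarity ratio is a rational square, the third side of this representative is still a rational square, so \(r=z^2\) with \(z\in\Q^\times\). Substituting into \eqref{eq:normal-form} gives exactly \eqref{eq:genus-three-affine}. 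Hence every such class yields a point \((z,q)\in C_{a,b}(\Q)\) with \(z\ne0\). The choices of sign for the square roots \(q\) and \(z\) are the only ambiguities, so the point is well defined modulo \(z\mapsto-z\) and \(q\mapsto-q\).

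Conversely, take \((z,q)\in C_{a,b}(\Q)\) with \(z\ne0\) and put \(r=z^2>0\). We also need \(q\ne0\). If \(q=0\), then \(r^2-2ar+1=0\), which forces the discriminant \(4a^2-4=-4b^2\) to be \(\ge0\); this is impossible since \(b\ne0\). So the converse part of Proposition~\ref{prop:marked-normal-form} applies. It produces the nondegenerate triangle with vertices \((0,0)\), \((r,0)\), \((a,b)\), side lengths \((1,q^2,z^2)\), and area \(\abs b r/2=\abs b z^2/2\).

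Next I would check that the two maps are mutually inverse on the quotient. The normalized sides determine \(q^2\) and \(z^2\), and hence \(\pm q\) and \(\pm z\). For fixed ordered marking and fixed \((a,b)\), the normal form is unique: the first marked side is scaled to \(1\), and \(a\) is recovered from the cosine rule as \(a=(r^2+1-q^4)/(2r)\). So two points give the same marked class exactly when they differ by the sign changes. This is the only point needing care. The orientation sign of \(b\) is fixed by the hypothesis, consistent with the table following Remark~\ref{rem:marked-quotient}.

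Finally, for the statement about forgetting the marking: an unmarked triangle has at most six ordered choices of two marked sides. Each choice gives a normal form whose \(a\) is the cosine of the angle between the two marked sides, so different markings generally lead to different \((a,b)\) and hence to different curves \(C_{a,b}\). This gives the finite-to-one claim. I expect no real obstacle. The only step requiring attention is keeping the bookkeeping of signs and markings consistent with Remark~\ref{rem:marked-quotient}.
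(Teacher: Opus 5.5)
Your proposal is correct and follows the paper's proof. You specialize Proposition~\ref{prop:marked-normal-form} to \(r=z^2\) and use the vertex construction \((0,0),(z^2,0),(a,b)\) for the converse. You exclude \(q=0\) by your discriminant argument, which is the same fact as the paper's \(q^4=(z^2-a)^2+b^2>0\). Finally, you observe that the sign changes and the finitely many edge markings are the only ambiguities.

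One small slip in your last paragraph: \(a\) is the cosine of the angle between the first marked side and the \emph{unmarked} side, that is, the angle opposite the second marked side. It is not the angle between the two marked sides. Consequently the markings \((A,B)\) and \((C,B)\) share the same \(a\), and they are exchanged by the involution \(\tau(z,q)=(1/z,q/z)\) on the same curve. This is why the statement says ``generally''; it does not affect the finite-to-one claim.
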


\begin{proof}
This is Proposition~\ref{prop:marked-normal-form} with \(r=z^2\).
Conversely, a point of \eqref{eq:genus-three-affine} gives the vertices
\[
 (0,0),\qquad(z^2,0),\qquad(a,b),
\]
whose sides are \(z^2,1,q^2\) and whose area is
\(\abs b z^2/2>0\).  Also \(q\ne0\), because
\[
 q^4=(z^2-a)^2+b^2>0.
\]
The sign changes do not alter the side lengths.  Permuting the edge
marking is a finite operation, but it generally changes \((a,b)\).
\end{proof}

\begin{proposition}[A smooth plane quartic]
\label{prop:all-square-genus-three}
The projective curve
\begin{equation}\label{eq:genus-three-projective}
 Q^4=Z^4-2aZ^2W^2+W^4
 \subset\PP^2
\end{equation}
is smooth.  It is therefore a nonhyperelliptic curve of genus \(3\).
\end{proposition}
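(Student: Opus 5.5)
We apply the Jacobian criterion directly to
\[
 F(Q,Z,W)=Q^4-Z^4+2aZ^2W^2-W^4 .
\]
The partial derivatives are
\[
 F_Q=4Q^3,\qquad
 F_Z=-4Z^3+4aZW^2=-4Z(Z^2-aW^2),\qquad
 F_W=4aZ^2W-4W^3=-4W(W^2-aZ^2).
\]
At a singular point, \(F_Q=0\) forces \(Q=0\). The point then lies on the binary quartic locus \(Z^4-2aZ^2W^2+W^4=0\), and the conditions \(F_Z=F_W=0\) say that \((Z:W)\) is a repeated root of this binary form. So singularity of the curve is equivalent to the vanishing of the discriminant of \(g(Z,W)=Z^4-2aZ^2W^2+W^4\). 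The proof therefore reduces to showing that \(g\) has four distinct roots in \(\PP^1(\overline{\Q})\) whenever \(a^2\ne1\).

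We check this by hand. Neither \(W=0\) nor \(Z=0\) gives a root, since the leading and constant coefficients equal \(1\). In the affine coordinate \(t=Z/W\) we get \(t^4-2at^2+1=(t^2-a)^2-(a^2-1)\). Hence \(t^2=a\pm\sqrt{a^2-1}=a\pm ib\), where \(b^2=1-a^2\ne0\) by \eqref{eq:intro-ab}. These two values of \(t^2\) are distinct because \(b\ne0\), and both are nonzero because their product is \(1\). Each nonzero value of \(t^2\) gives two distinct values of \(t\). So there are four distinct roots and the discriminant is nonzero; up to a constant it equals \(2^{8}(a^2-1)^2=2^8b^4\).

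The direct elimination reaches the same conclusion. If \(Z=0\), then \(F_W=0\) gives \(W=0\), which together with \(Q=0\) is not a projective point; the case \(W=0\) is symmetric. Otherwise \(Z^2=aW^2\) and \(W^2=aZ^2\), so \(a^2=1\), which contradicts \(b\ne0\).

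A smooth plane quartic has genus \((4-1)(4-2)/2=3\) by the genus--degree formula. Its canonical embedding is the given plane embedding, since \(K=\mathcal O(d-3)=\mathcal O(1)\). Hence it is not hyperelliptic, because a hyperelliptic curve's canonical map is two-to-one onto a conic. None of these steps is a real obstacle. The only point needing care is to note that the case \(Q=0\) requires no separate handling, and that we are working over \(\overline{\Q}\), so smoothness is geometric. We also note that the affine model \eqref{eq:genus-three-affine} is the chart \(W=1\) with \(q=Q\) and \(z=Z\).
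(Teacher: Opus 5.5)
Your proposal is correct and follows the paper's proof. The Jacobian criterion forces \(Q=0\). The cases \(Z=0\) or \(W=0\) collapse to the zero vector, and otherwise \(Z^2=aW^2\) and \(W^2=aZ^2\) give \(a^2=1\), contradicting \(b\ne0\). The degree--genus formula and canonicity of the plane embedding then finish exactly as in the paper.

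Your extra reformulation, via distinct roots of the binary quartic \(Z^4-2aZ^2W^2+W^4\), is an equivalent check. Its discriminant is in fact exactly \(2^8b^4\), not just up to a constant.
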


\begin{proof}
Let
\[
 F(Z,Q,W)=Q^4-Z^4+2aZ^2W^2-W^4.
\]
Its partial derivatives are
\[
 F_Q=4Q^3,\qquad
 F_Z=4Z(aW^2-Z^2),\qquad
 F_W=4W(aZ^2-W^2).
\]
At a singular point, \(Q=0\).  Neither \(Z\) nor \(W\) can vanish,
because the equation would then force both to vanish.  The remaining
two derivative equations give
\[
 Z^2=aW^2,\qquad W^2=aZ^2,
\]
and hence \(a^2=1\).  This contradicts \(1-a^2=b^2\ne0\).
Thus the plane quartic is smooth.  A smooth plane quartic has genus
\((4-1)(4-2)/2=3\), and its plane embedding is canonical, so it is
nonhyperelliptic.
\end{proof}

\begin{corollary}[Fiberwise finiteness]\label{cor:fixed-fiber-finiteness}
For each fixed \(a,b\) satisfying \eqref{eq:intro-ab},
\[
 \#C_{a,b}(\Q)<\infty.
\]
\end{corollary}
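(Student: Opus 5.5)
This is a direct application of Faltings' theorem. The plan is to deduce finiteness of \(C_{a,b}(\Q)\) from Proposition~\ref{prop:all-square-genus-three}: for fixed \(a,b\) satisfying \eqref{eq:intro-ab}, the projective closure \eqref{eq:genus-three-projective} is a smooth plane quartic defined over \(\Q\), hence a smooth projective geometrically integral curve of genus \(3\). By Faltings' theorem (Mordell's conjecture), a curve of genus at least \(2\) over a number field has only finitely many rational points. The affine curve \eqref{eq:genus-three-affine} is the chart \(W=1\) of the projective quartic, via \(z=Z/W\) and \(q=Q/W\). Hence the affine rational points inject into the projective ones, and \(\#C_{a,b}(\Q)<\infty\) follows.

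The steps, in order, are as follows.

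First, I will record that the coefficients \(1,-2a,1\) are rational. The quartic is therefore defined over \(\Q\).

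Second, I will check geometric integrality. Smoothness over \(\overline{\Q}\) is exactly what the proof of Proposition~\ref{prop:all-square-genus-three} checks, since that computation is field-independent and only uses \(a^2\ne1\). A smooth plane curve is irreducible, because two components would meet, by B\'ezout, in a singular point. So the genus formula applies geometrically.

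Third, I will invoke Faltings and pass to the affine chart. It is also worth noting the points at infinity: \(W=0\) gives \(Q^4=Z^4\), so \(Q=\pm Z\) over \(\Q\). These are the two boundary points \([1:\pm1:0]\), which are finitely many anyway.

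There is essentially no obstacle here. The only point requiring care is that smoothness must hold over \(\overline{\Q}\), not merely for rational points, so that the geometric genus is \(3\). The Jacobian-criterion computation already given is algebraic and valid over any field of characteristic \(0\), so it suffices.

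If one wanted an effective or more explicit argument instead, one could use the quotient maps to \(E_b\) and \(F_a\) announced in Theorem~B together with Chabauty-type methods. That is not needed for mere finiteness.
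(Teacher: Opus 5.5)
Your proposal is correct and matches the paper's proof, which simply applies Faltings's theorem to the smooth genus-three plane quartic of Proposition~\ref{prop:all-square-genus-three}. Your extra remarks make explicit what the paper leaves implicit: that the smoothness computation holds over \(\overline{\Q}\), so the curve is geometrically integral by B\'ezout, and that affine rational points inject into projective ones.
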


\begin{proof}
This is Faltings's theorem \cite{Faltings}, applied to the smooth
genus-three curve of
Proposition~\ref{prop:all-square-genus-three}.
\end{proof}

\begin{remark}
Corollary~\ref{cor:fixed-fiber-finiteness} is only a fiberwise
statement.  It gives no finiteness result for the union of
\(C_{a,b}(\Q)\) as \(a,b\) vary.
\end{remark}

\begin{proposition}[Trivial points and exact marked class count]
\label{prop:exact-marked-class-count}
In the projective coordinates of
\eqref{eq:genus-three-projective}, put
\[
 \mathcal T=
 \{[0:1:1],[0:-1:1],[1:1:0],[1:-1:0]\}.
\]
These are exactly the rational points with \(ZW=0\), and
\(C_{a,b}\) has no rational point with \(Q=0\).  The Klein four-group
generated by the sign changes of \(Z\) and \(Q\) acts freely on
\(C_{a,b}(\Q)\setminus\mathcal T\).  Consequently, for fixed oriented
\((a,b)\) and a fixed ordered marking,
\begin{equation}\label{eq:exact-marked-count}
 {
 N^{\mathrm{marked}}_{a,b}
 =\frac{\#C_{a,b}(\Q)-4}{4}.}
\end{equation}
\end{proposition}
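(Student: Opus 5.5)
The plan is to check the three claims in turn directly from the projective equation \(Q^4=Z^4-2aZ^2W^2+W^4\), and then count orbits.

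\emph{Points with \(ZW=0\).} If \(Z=0\), the equation becomes \(Q^4=W^4\), so \(Q=\pm W\). Here \(W\ne0\), since otherwise all coordinates vanish. Over \(\Q\) the other fourth roots of unity are excluded, so we get exactly \([0:\pm1:1]\). Symmetrically, \(W=0\) gives \(Q^4=Z^4\) and the points \([1:\pm1:0]\). These four points are rational and distinct, so they make up \(\mathcal T\).

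\emph{No rational point with \(Q=0\).} Suppose \(Q=0\). Then \(W\ne0\), because \(W=0\) would force \(Z=0\). Dehomogenizing with \(z=Z/W\) gives \(z^4-2az^2+1=0\), that is, \((z^2-a)^2+b^2=0\). This is impossible over \(\R\) since \(b\ne0\). This is the same computation as in Theorem~\ref{thm:all-square-criterion}.

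\emph{Freeness of the Klein four-group.} The group is generated by \(\sigma:Z\mapsto-Z\) and \(\tau:Q\mapsto-Q\), acting on points \([Z:Q:W]\). Scaling by \(-1\) identifies \([Z:Q:W]\) with \([-Z:-Q:-W]\), so one must be careful that these projective sign changes act as stated. The clean way is to normalize a point outside \(\mathcal T\) by \(W=1\); this is allowed because \(W\ne0\) off \(\mathcal T\), by the first step. In the affine coordinates \((z,q)\), \(\sigma,\tau,\sigma\tau\) act by sign changes, and a fixed point of any nontrivial element would need \(z=0\) or \(q=0\). Now \(z=0\) means \(Z=0\), which lies in \(\mathcal T\), and \(q=0\) is excluded by the second step. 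Hence the action on \(C_{a,b}(\Q)\setminus\mathcal T\) is free. The group also preserves this set, since it preserves the conditions \(ZW=0\) and \(W\ne0\). Every orbit therefore has size \(4\).

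\emph{Counting.} By Theorem~\ref{thm:all-square-criterion}, marked classes for fixed oriented \((a,b)\) and a fixed ordered marking correspond to affine rational points with \(z\ne0\), modulo \(z\mapsto-z\) and \(q\mapsto-q\). Those affine points with \(z\ne0\) are precisely \(C_{a,b}(\Q)\setminus\mathcal T\). Free orbits of size \(4\) then give \((\#C_{a,b}(\Q)-4)/4\), and Corollary~\ref{cor:fixed-fiber-finiteness} ensures the count is finite.

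The only delicate point is the projective bookkeeping of the sign changes, namely checking that \(\sigma\) and \(\tau\) act freely projectively rather than only on affine representatives. Working in the affine chart \(W=1\), which contains all of the complement of \(\mathcal T\), settles it.
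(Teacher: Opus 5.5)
Your proof is correct and follows the paper's argument essentially step by step. It uses the same case analysis for \(ZW=0\) and for \(Q=0\), the latter via \((z^2-a)^2+b^2\ne0\). Freeness comes, as in the paper, from the fact that every point off \(\mathcal T\) is affine with \(zq\ne0\), and the count comes from Theorem~\ref{thm:all-square-criterion}. Your explicit restriction to the chart \(W=1\), where a fixed point of \(\sigma\tau\) would need \(z=q=0\), is just a slightly more careful version of the paper's remark that the three involutions can fix only points with \(z=0\), \(q=0\), or \(W=0\).
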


\begin{proof}
When \(Z=0\), the equation gives \(Q^4=W^4\), yielding the first two
points of \(\mathcal T\); when \(W=0\), it gives \(Q^4=Z^4\), yielding
the other two.  If \(Q=0\) and \(W\ne0\), then in affine coordinates
\[
 0=z^4-2az^2+1=(z^2-a)^2+b^2,
\]
which is impossible over \(\Q\) because \(b\ne0\); the case \(W=0\)
is also impossible.

Every remaining rational point is affine with \(zq\ne0\).  The three
nonidentity sign changes can fix a point only when, respectively,
\(z=0\), \(q=0\), or \(W=0\).  Hence every remaining orbit has four
points.  By Theorem~\ref{thm:all-square-criterion}, each such orbit is
exactly one marked class, proving \eqref{eq:exact-marked-count}.
\end{proof}

\subsection{The first double cover}

For clarity, write
\[
 \mathcal Q_b:\quad v^2=x^4-b^2
\]
for the genus-one quartic of Section~\ref{sec:elliptic-model}.

\begin{proposition}\label{prop:double-cover}
The map
\begin{equation}\label{eq:first-double-cover}
 \pi_1:C_{a,b}\longrightarrow\mathcal Q_b,\qquad
 (z,q)\longmapsto(x,v)=(q,z^2-a)
\end{equation}
extends to a degree-two morphism of smooth projective curves.  It is
the quotient by
\[
 \iota_z(z,q)=(-z,q).
\]
Its four geometric branch points are
\[
 (z,q)=(0,\zeta),\qquad \zeta^4=1,
\]
and it is unramified at infinity.
\end{proposition}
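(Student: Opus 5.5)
First I would check that the map lands on \(\mathcal Q_b\).  Since \(a^2+b^2=1\), \eqref{eq:genus-three-affine} gives
\[
 q^4=(z^2-a)^2+1-a^2=(z^2-a)^2+b^2,
\]
so \(v^2=(z^2-a)^2=q^4-b^2=x^4-b^2\).  Hence \(\pi_1\) is a rational map \(C_{a,b}\dashrightarrow\mathcal Q_b\).  Because \(C_{a,b}\) is a smooth projective curve (Proposition~\ref{prop:all-square-genus-three}) and the smooth projective model of \(\mathcal Q_b\) is projective, the rational map extends to a morphism.

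For the degree, I would look at the function fields.  \(\Q(C_{a,b})=\Q(z,q)\) contains \(\Q(q,z^2)=\pi_1^*\Q(\mathcal Q_b)\), and \(z\) satisfies \(z^2=v+a\) over this subfield.  The element \(z\) is not in \(\Q(q,z^2)\): the automorphism \(\iota_z\) fixes \(q\) and \(z^2\) but sends \(z\) to \(-z\ne z\).  So the extension has degree exactly \(2\), and it is the fixed field of \(\iota_z\).  Therefore \(\pi_1\) is the quotient by \(\langle\iota_z\rangle\).

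The ramification points of a quotient by an involution are its fixed points.

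\begin{itemize}
\item \emph{Affine points.}  A fixed affine point has \(z=-z\), so \(z=0\), and then \(q^4=1\).  This gives the four points \((0,\zeta)\) with \(\zeta^4=1\).
\item \emph{Points at infinity.}  Setting \(W=0\) in \eqref{eq:genus-three-projective} gives \(Q^4=Z^4\), i.e.\ the four points \([1:\zeta:0]\).  In projective form, \(\iota_z\) is \([Z:Q:W]\mapsto[-Z:Q:W]\), which on this line equals \([Z:-Q:0]\).  So it sends \([1:\zeta:0]\) to \([1:-\zeta:0]\ne[1:\zeta:0]\), and there are no fixed points at infinity.
\end{itemize}

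As a consistency check, Riemann--Hurwitz gives \(2\cdot3-2=2(2\cdot1-2)+r\), so \(r=4\), matching the four affine fixed points.

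The main care point is that \(\iota_z\) is an automorphism of the whole smooth projective curve and that all fixed points, including those at infinity, have been found.  The projective formula above settles this.  Finally, I would note that the branch points, i.e.\ the images \((x,v)=(\zeta,-a)\), are distinct, and that the \(\pi_1\)-images of the points at infinity are the two points at infinity of \(\mathcal Q_b\), each with two preimages.
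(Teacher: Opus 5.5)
Your proposal is correct and follows essentially the same route as the paper. Both verify $(z^2-a)^2=q^4-b^2$, identify $\Q(C_{a,b})$ as the quadratic extension $\Q(q,z^2)(z)$ with involution $\iota_z$, locate the affine fixed points at $z=0$, $q^4=1$, and rule out fixed points at infinity using the action of $[Z:Q:W]\mapsto[-Z:Q:W]$ on the points $[1:\zeta:0]$. Your explicit argument that $z\notin\Q(q,z^2)$ (because $\iota_z$ moves $z$) and your remark that the actual branch points on $\mathcal Q_b$ are $(\zeta,-a)$ slightly sharpen the paper's wording.
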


\begin{proof}
On \(C_{a,b}\),
\[
 (z^2-a)^2=z^4-2az^2+a^2=q^4-b^2,
\]
so \eqref{eq:first-double-cover} lands on \(\mathcal Q_b\).  The
function fields satisfy
\[
 \Q(C_{a,b})=\Q(\mathcal Q_b)(z),\qquad z^2=v+a.
\]
This is generically a quadratic extension, with nontrivial
automorphism \(z\mapsto-z\).

The affine fixed points have \(z=0\), hence \(q^4=1\), and they are
simple ramification points.  At infinity the points have
\([Z:Q:0]\) with \(ZQ\ne0\) and \(Q^4=Z^4\).  Equality
\([-Z:Q:0]=[Z:Q:0]\) would require the projective scaling factor to
be simultaneously \(-1\) from the first coordinate and \(1\) from
the second, which is impossible.  Thus no point at infinity is fixed.
The rational map extends uniquely between the smooth projective
models.
\end{proof}

Riemann--Hurwitz independently gives genus one for the quotient:
\[
 2\cdot3-2=2(2g-2)+4\quad\Longrightarrow\quad g=1.
\]
For affine rational points, the complete lifting condition is
\begin{equation}\label{eq:square-lift}
 {\pi_1^{-1}(x,v)(\Q)\ne\varnothing
 \quad\Longleftrightarrow\quad v+a\in\Q^2,}
\end{equation}
where \(\Q^2=\{t^2:t\in\Q\}\) includes \(0\).  Such a lift produces a
nondegenerate triangle if and only if
\[
 v+a\in\Q^{\times2};
\]
the omitted case \(v=-a\) has \(z=0\) and is one of the trivial
ramification points.

\subsection{Ciani symmetry and the three elliptic quotients}

The quartic \eqref{eq:genus-three-projective} is the Ciani subfamily
studied by Gu\`ardia: after putting \(n=(a+1)/2\) and identifying
\((Y,X,Z)=(Q,Z,W)\), it becomes
\[
 C_n:\quad Y^4=X^4-(4n-2)X^2Z^2+Z^4.
\]
The geometric elliptic quotients and the associated Jacobian splitting
were constructed in \cite{GuardiaJLMS,GuardiaJTNB}.  We retain
explicit \(\Q\)-models and quotient maps because their compatibility
with the Heron parametrization is essential for the arithmetic
applications below.

Let
\[
 \iota_q(z,q)=(z,-q),\qquad
 G=\langle\iota_z,\iota_q\rangle
 \simeq(\Z/2\Z)^2,
\]
and define
\begin{equation}\label{eq:F-a}
 F_a:\quad V^2=2(U-a)(U-1)(U+1).
\end{equation}
Since \(a\ne\pm1\), this is an elliptic curve, with its point at
infinity as origin.  The change \(x=2U,\ y=2V\) gives the split cubic
Weierstrass model
\begin{equation}\label{eq:F-a-standard}
 F'_a:\quad y^2=(x-2a)(x-2)(x+2).
\end{equation}
Its standard invariants are
\begin{equation}\label{eq:F-a-invariants}
 {\Delta(F'_a)=2^{12}b^4,\qquad
 j(F'_a)=64\,\frac{(a^2+3)^3}{b^4}.}
\end{equation}
In particular, \(F'_a\) is nonsingular and has full rational
two-torsion.

\begin{theorem}[Three elliptic quotients]
\label{thm:three-elliptic-quotients}
The quotients by the three nontrivial order-two subgroups of \(G\)
are given by the following degree-two maps:
\begin{align}
 \phi_1(z,q)
 &=\left(q,z^2-a\right)
 &&\text{(quotient by \(\iota_z\))},\label{eq:quotient-one}\\
 \phi_2(z,q)
 &=\left(\frac qz,\frac1{z^2}-a\right)
 &&\text{(quotient by \(\iota_z\iota_q\))},\label{eq:quotient-two}\\
 \phi_3(z,q)
 &=(U,V),\quad
 U=\frac{q^2+1}{z^2},\quad
 V=z(U^2-1)
 &&\text{(quotient by \(\iota_q\))}.\label{eq:quotient-three}
\end{align}
The first two targets are \(\mathcal Q_b\), and the third is \(F_a\).
\end{theorem}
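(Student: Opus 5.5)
For each of the three maps the plan is the same: verify that the image lies on the stated target, check invariance under the relevant involution, and show that the map has degree exactly two. Since \(G\) has order four, degree two then identifies the target with the quotient by that order-two subgroup.

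For \(\phi_1\) there is nothing new: Proposition~\ref{prop:double-cover} already shows that \(\phi_1=\pi_1\) is the quotient by \(\iota_z\).

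For \(\phi_2\) I would use the symmetry \(\sigma:[Z:Q:W]\mapsto[W:Q:Z]\) of \eqref{eq:genus-three-projective}. In affine terms it is \((z,q)\mapsto(1/z,q/z)\), because the quartic is palindromic in \(Z,W\). Then \(\phi_2=\phi_1\circ\sigma\). So \(\phi_2\) lands on \(\mathcal Q_b\), has degree two, and is the quotient by \(\sigma^{-1}\iota_z\sigma\). Computing this conjugate gives
\[
(z,q)\mapsto(1/z,q/z)\mapsto(-1/z,q/z)\mapsto(-z,-q),
\]
which is \(\iota_z\iota_q\).

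For \(\phi_3\), invariance under \(\iota_q\) is clear, since \(U\) involves only \(q^2\) and \(V=z(U^2-1)\). It remains to check that \(V^2=2(U-a)(U^2-1)\), that is,
\[
z^2(U^2-1)=2(U-a)
\]
once \(U^2-1\ne0\) is divided out. Writing \(p=q^2\) and using \(p^2=z^4-2az^2+1\), this reduces to
\[
(p+1)^2-z^4=2(p+1)z^2-2az^4\cdot\frac{1}{z^2},
\]
after multiplying by \(z^2\). Expanding, the left side is \(p^2+2p+1-z^4=2p+2-2az^2\), and the right side is \(2pz^2\cdot\frac{1}{z^2}\ldots\). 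The cleanest route is to substitute \(p^2=z^4-2az^2+1\) directly and confirm the polynomial identity
\[
z^2\bigl((p+1)^2-z^4\bigr)=2z^2\bigl(p+1-az^2\bigr),
\]
which holds because \((p+1)^2-z^4=2(p+1)-2az^2\).

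Degree two then follows from the function field tower. The field \(\Q(U,V)\) contains \(U\) and \(z=V/(U^2-1)\), hence \(q^2=Uz^2-1\). It is therefore exactly the fixed field \(\Q(z,q^2)\) of \(\iota_q\), and \([\Q(z,q):\Q(z,q^2)]=2\).

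The main obstacle is this algebra for \(\phi_3\), in particular making the identity \((p+1)^2-z^4=2(p+1-az^2)\) transparent. As a consistency check, Riemann--Hurwitz confirms genus one for each quotient. For example, \(\iota_q\) fixes the four points with \(q=0\), which are geometric points by Proposition~\ref{prop:exact-marked-class-count} but are not rational.
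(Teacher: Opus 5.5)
Your argument is correct. For \(\phi_1\) and \(\phi_3\) it coincides with the paper's proof. The paper also cites Proposition~\ref{prop:double-cover} for \(\phi_1\). For \(\phi_3\) it likewise passes to \(w=q^2\), derives \((U^2-1)z^2=2(U-a)\), and recovers \(z=V/(U^2-1)\), \(w=Uz^2-1\); this is exactly your identification \(\Q(U,V)=\Q(z,q^2)\).

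The genuine difference is \(\phi_2\). The paper checks it directly: it verifies \(x^4-b^2=(q^4-b^2z^4)/z^4=(1/z^2-a)^2\), notes invariance under \((z,q)\mapsto(-z,-q)\), and gets degree two from the inverse formulas \(z^2=1/(v+a)\), \(q=xz\). You instead write \(\phi_2=\phi_1\circ\tau\) with \(\tau(z,q)=(1/z,q/z)\) and compute \(\tau\iota_z\tau=\iota_z\iota_q\). The paper records these two facts only afterwards, in the remark on the dihedral symmetry, where they explain the repeated factor \(E_b\). Your route yields the target, the degree, and the identity of the involution with no further algebra, and it makes the coincidence of the first two targets structural. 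The paper's route is self-contained and produces explicit inverse formulas for the second quotient.

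Two small repairs are needed. First, in the \(\phi_3\) algebra your first reduced display is wrong: after multiplying by \(z^2\), the right side is \(2(p+1)-2az^2\), not \(2(p+1)z^2-2az^2\). The sentence after it also trails off. Delete both and keep only \((p+1)^2-z^4=2(p+1)-2az^2\), which is immediate from \(p^2=z^4-2az^2+1\); the extra factor \(z^2\) in your final identity is superfluous.

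Second, what identifies each target with the quotient is invariance under the involution together with degree two. Those two facts make the induced map from \(C_{a,b}/\langle\iota\rangle\) birational, hence an isomorphism of smooth projective curves. The order of \(G\) plays no role in this.
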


\begin{proof}
The first map is Proposition~\ref{prop:double-cover}.  For the second,
put
\[
 x=\frac qz,\qquad v=\frac1{z^2}-a.
\]
Using \eqref{eq:genus-three-affine},
\[
\begin{aligned}
 x^4-b^2
 &=\frac{q^4-b^2z^4}{z^4}\\
 &=\frac{a^2z^4-2az^2+1}{z^4}
 =\left(\frac1{z^2}-a\right)^2=v^2.
\end{aligned}
\]
Both functions are invariant under \((z,q)\mapsto(-z,-q)\), and
generically
\[
 z^2=\frac1{v+a},\qquad q=xz,
\]
so the fibers have degree two.

For the third map, first pass to the quotient coordinate \(w=q^2\).
Then
\[
 w^2=z^4-2az^2+1.
\]
Set \(U=(w+1)/z^2\), so \(w=Uz^2-1\).  Substitution gives
\[
 (U^2-1)z^2=2(U-a).
\]
With \(V=z(U^2-1)\), this becomes
\[
 V^2=2(U-a)(U^2-1)
 =2(U-a)(U-1)(U+1).
\]
Conversely,
\[
 z=\frac{V}{U^2-1},\qquad w=Uz^2-1
\]
away from finitely many points.  Thus the map is the degree-two
quotient by \(q\mapsto-q\).  All three rational maps extend uniquely
to the smooth projective models.
\end{proof}

\begin{remark}[The dihedral symmetry]
The involution
\[
 \tau[Z:Q:W]=[W:Q:Z]
\]
is defined over \(\Q\).  In projective coordinates, with
\[
 \iota_z[Z:Q:W]=[-Z:Q:W],\qquad
 \iota_q[Z:Q:W]=[Z:-Q:W],
\]
one has
\[
 \tau\iota_z\tau=\iota_z\iota_q,\qquad
 \tau\iota_q\tau=\iota_q.
\]
These automorphisms generate a dihedral subgroup of order \(8\).
On the affine chart,
\[
 \tau(z,q)=\left(\frac1z,\frac qz\right),
 \qquad \phi_2=\phi_1\circ\tau.
\]
This symmetry explains the repeated \(E_b\)-factor in the Jacobian
decomposition.
\end{remark}

\begin{remark}
Each involution has four geometric fixed points:
\[
\begin{array}{c|c}
\iota_z & z=0,\ q^4=1\\
\iota_q & q=0,\ z^4-2az^2+1=0\\
\iota_z\iota_q & \text{the four points at infinity}.
\end{array}
\]
Riemann--Hurwitz again gives genus one for every quotient.  The full
group quotient is the genus-zero conic
\[
 T^2=S^2-2aS+1,\qquad S=z^2,\quad T=q^2,
\]
which contains \((S,T)=(0,1)\).
\end{remark}

\begin{theorem}[Jacobian decomposition]
\label{thm:jacobian-decomposition}
There is an isogeny over \(\Q\)
\begin{equation}\label{eq:jacobian-isogeny}
 {\Jac(C_{a,b})\sim_{\Q}E_b^2\times F_a.}
\end{equation}
\end{theorem}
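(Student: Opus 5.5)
The plan is to apply the standard idempotent decomposition (Kani--Rosen) for a curve with an action of \(G=\langle\iota_z,\iota_q\rangle\simeq(\Z/2\Z)^2\), all of whose elements are defined over \(\Q\). For a Klein four-group acting on a curve \(C\), with quotients \(C/H_i\) by the three subgroups of order two, Kani--Rosen gives an isogeny
\[
 \Jac(C)\times\Jac(C/G)^2\sim\Jac(C/H_1)\times\Jac(C/H_2)\times\Jac(C/H_3).
\]
It is defined over the field of definition of the group action, here \(\Q\). By the remark on the full quotient, \(C_{a,b}/G\) is a conic of genus zero, so \(\Jac(C_{a,b}/G)=0\). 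By Theorem~\ref{thm:three-elliptic-quotients}, the three quotients are \(\mathcal Q_b\), \(\mathcal Q_b\), and \(F_a\).

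To avoid relying on the citation, I would also give a direct argument. Consider the pullback-sum map
\[
 \psi=\phi_1^*+\phi_2^*+\phi_3^*:\ \Jac(\mathcal Q_b)^2\times\Jac(F_a)\longrightarrow\Jac(C_{a,b}),
\]
which is defined over \(\Q\) because each \(\phi_i\) is. Both sides have dimension \(3\), so it suffices to show that \(\ker\psi\) is finite. For this, compose with the pushforwards \(\phi_{j*}\). Write \(\phi_{j*}\phi_i^*\) as the correspondence \(\sum_{h\in H_j}h^*\) restricted to the \(H_i\)-invariant part.
\begin{itemize}
\item For \(i=j\), the composite \(\phi_{i*}\phi_i^*\) is multiplication by \(2\).
\item For \(i\ne j\), the composite \(\phi_{j*}\phi_i^*\) factors through \(\Jac(C_{a,b}/G)=0\), since \(H_i+H_j\) generates \(G\). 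Hence it vanishes.
\end{itemize}
So \((\phi_{1*},\phi_{2*},\phi_{3*})\circ\psi=[2]\), which gives \(\ker\psi\subseteq\) the \(2\)-torsion, a finite group. Therefore \(\psi\) is an isogeny over \(\Q\).

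It then remains to identify \(\Jac(\mathcal Q_b)\) with \(E_b\) over \(\Q\). Theorem~\ref{thm:natural-elliptic-correspondence} gives an explicit \(\Q\)-birational map \(\mathcal Q_b\to E_b\). Since \(\mathcal Q_b\) has rational points at infinity, the map is an isomorphism of genus-one curves over \(\Q\), and so \(\Jac(\mathcal Q_b)\simeq E_b\). Likewise \(\Jac(F_a)=F_a\). Combining these gives \eqref{eq:jacobian-isogeny}.

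The main obstacle is the vanishing of the cross terms \(\phi_{j*}\phi_i^*\). I would verify it via the group-algebra identity: on \(\Jac(C)\), the \(H_i\)-invariant and \(H_j\)-invariant parts intersect in the \(G\)-invariant part, up to isogeny, and the \(G\)-invariant part is the image of \(\Jac(C/G)=0\). Concretely, \(\phi_{j*}\phi_i^*\) corresponds to \((1+h_j)(1+h_i)=\sum_{g\in G}g\), and this element acts as zero because the quotient by \(G\) has genus \(0\).
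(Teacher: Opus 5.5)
Your proposal is correct and follows the paper's proof: Kani--Rosen for the \(\Q\)-rational Klein four-group \(G=\langle\iota_z,\iota_q\rangle\), with the genus-zero conic \(C_{a,b}/G\) removing the \(\Jac(C_{a,b}/G)^2\) term, then the \(\Q\)-identification of both copies of \(\Jac(\mathcal Q_b)\) with \(E_b\) via the explicit quartic-to-elliptic correspondence. Your self-contained check uses a different but equally valid mechanism: the paper shows that the product of the three norm maps \(\Jac(C_{a,b})\to E_b\times F_a\times E_b\) is an isomorphism on holomorphic differentials, using the eigenbasis \(dz/q^3,\ z\,dz/q^3,\ dz/q^2\), whereas you show \((\phi_{1*},\phi_{2*},\phi_{3*})\circ\psi=[2]\). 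In your cross-term step, note that it is \(\phi_j^*(\phi_{j*}\phi_i^*)\phi_{i*}\), not \(\phi_{j*}\phi_i^*\) itself, that equals \(\sum_{g\in G}g=0\); surjectivity of \(\phi_{i*}\), finiteness of \(\ker\phi_j^*\), and connectedness then give \(\phi_{j*}\phi_i^*=0\).
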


\begin{proof}
The two genus-one curves \(\mathcal Q_b\) are birational over \(\Q\)
to \(E_b\) by
Theorem~\ref{thm:natural-elliptic-correspondence}; choosing a rational
point at infinity as origin identifies each Jacobian with \(E_b\).
The Klein four-group decomposition of Kani and Rosen
\cite{KaniRosen} therefore gives \eqref{eq:jacobian-isogeny}.

For completeness, the isogeny can also be seen directly on
differentials.  On the affine chart of
\eqref{eq:genus-three-projective}, adjunction gives a basis
\[
 \omega_0=\frac{dz}{q^3},\qquad
 \omega_1=\frac{z\,dz}{q^3},\qquad
 \omega_2=\frac{dz}{q^2}
\]
of \(H^0(C_{a,b},\Omega^1)\).  The actions are
\[
\begin{array}{c|ccc}
 &\omega_0&\omega_1&\omega_2\\
\hline
\iota_z&-&+&-\\
\iota_q&-&-&+\\
\iota_z\iota_q&+&-&-
\end{array}
\]
Thus the invariant eigenspaces belonging to the three quotient maps
are independent one-dimensional subspaces whose direct sum is all of
\(H^0(C_{a,b},\Omega^1)\).  The product of the three norm maps defines
\[
 \Jac(C_{a,b})\longrightarrow E_b\times F_a\times E_b.
\]
The pullback on regular differentials is the direct sum of the three
curve-level pullbacks just described, hence is an isomorphism.  The
homomorphism therefore has finite kernel; since both sides have
dimension \(3\), it is an isogeny over \(\Q\).
\end{proof}

\subsection{Generic rigidity and two elliptic fibrations on a singular
K3 surface}

Put
\[
 K_0=\Q(k),\qquad
 K_{\mathrm{geom}}=\overline{\Q}(k)
 \subset\overline{K_0},\qquad
 a=\frac{2k}{k^2+1},\qquad
 b=\frac{k^2-1}{k^2+1}.
\]
Here and below, \(\overline{K_0}\) is an algebraic closure of \(K_0\).
Thus \(K_{\mathrm{geom}}\) means constant-field extension, not
algebraic closure of the function field.

\begin{theorem}[The generic third quotient]
\label{thm:generic-third-quotient}
Let
\[
 F_a:\quad V^2=2(U-a)(U-1)(U+1)
\]
 over \(K_0\).  The relatively minimal elliptic surface over
\(\PP^1_k\) with generic fiber \(F_a\) is an extremal K3 surface.
Over \(\overline{\Q}\), its singular fibers are
\[
 I_4,\ I_4,\ I_2^*,\ I_2^*
\]
 at \(k=-1,1,i,-i\), respectively.  More precisely,
\begin{equation}\label{eq:generic-F-group}
 {
 F_a(K_{\mathrm{geom}})
 =F_a(K_0)
 =
 \{O,(a,0),(1,0),(-1,0)\}
 \simeq(\Z/2\Z)^2.}
\end{equation}
\end{theorem}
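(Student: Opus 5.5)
The plan is to work with the integral split model over \(K_0\). Substitute \(a=2k/h\) into \(F'_a\) and clear denominators with \(x=\xi/h\), \(y=\eta/h^{2}\). This gives
\[
 \eta^2=\bigl(\xi-4k\bigr)\bigl(\xi-2h\bigr)\bigl(\xi+2h\bigr),\qquad h=k^2+1,
\]
a Weierstrass equation whose coefficients are polynomials of degree at most \(2\) in \(k\) (\(a_2\) degree \(\le 2\), \(a_4\) degree \(\le 4\), \(a_6\) degree \(\le 6\)). Its discriminant is \(16\) times the product of the squared root differences:
\[
 \Delta=16\,(4k-2h)^2(4k+2h)^2(4h)^2
 =2^{12}\cdot 16\,(k-1)^4(k+1)^4(k^2+1)^2 .
\]
This has degree \(12\), so the surface has \(\chi=2\) (a K3, by the canonical bundle formula as in Proposition~\ref{prop:kummer-k3-surface}) and the fiber at infinity is smooth.

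Next I will apply Tate's algorithm at the four places. At \(k=\pm1\) exactly two roots collide, to order \(2\) in the difference, while \(c_4\) does not vanish. This gives multiplicative fibers \(I_4\). At \(k=\pm i\) we have \(h=0\), so the roots \(\pm2h\) collide with each other and all three roots become congruent to \(4k\) modulo \((k\mp i)\). After translating \(\xi\mapsto\xi+4k\), one has \(\ord(a_2)=1\), \(\ord(a_4)=2\), \(\ord(a_6)\ge3\), and \(\ord\Delta=2\). The additive type with \(\ord\Delta=8\) and a split triple-root pattern is \(I_2^*\). Euler numbers check: \(4+4+8+8=24\).

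Shioda--Tate then gives \(\rho=2+3+3+6+6+r=20+r\le 20\). So \(r=0\), the geometric rank is zero, and the surface is extremal. The torsion group consists of the rational \(2\)-torsion \(\{O,(a,0),(\pm1,0)\}\), and it remains to exclude larger torsion over \(K_{\mathrm{geom}}\). I expect this to be the main obstacle. For that I will use the discriminant-group bound: the torsion order squared divides \(\prod(\text{fiber discriminants})=4\cdot4\cdot4\cdot4=256\), so \(|F_{\mathrm{tors}}|\mid16\).

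To rule out order \(8\) or \(16\), I will argue directly. A point of order \(4\) exists over \(K_{\mathrm{geom}}\) exactly when, for some \(2\)-torsion point \(e_i\), both differences \(e_i-e_j\) are squares in \(\overline{\Q}(k)\) (up to the factor \(2\) in the model, which is harmless over \(\overline{\Q}\)). The differences are \(2(a\mp1)\) and \(\pm 4\). In terms of \(k\), \(a-1=-(k-1)^2/h\) and \(a+1=(k+1)^2/h\), and neither is a square because \(h\) is squarefree. Checking the three choices of \(e_i\):
\begin{itemize}
 \item \(e_i=\pm1\) involves \(a\mp1\), a nonsquare;
 \item \(e_i=a\) involves both, also nonsquare.
\end{itemize}
Hence there is no \(4\)-torsion, and no odd torsion by the bound. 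This proves \(F_a(K_{\mathrm{geom}})=F_a(K_0)\simeq(\Z/2\Z)^2\).
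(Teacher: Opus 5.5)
\textbf{The fiber computation is carried out on the wrong surface.} With \(x=\xi/h\) and \(y=\eta/h^2\), the equation \(y^2=(x-4k/h)(x-2)(x+2)\) becomes
\(\eta^2=h(\xi-4k)(\xi-2h)(\xi+2h)\), not the model you display. Removing the factor \(h\) would require \(y=\eta/h^{3/2}\).

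Your curve \(\eta^2=(\xi-4k)(\xi-2h)(\xi+2h)\) is therefore the quadratic twist of \(F'_a\) by \(h=(k-i)(k+i)\). Since \(h\) is not a square in \(\overline{\Q}(k)\), this is a different elliptic surface even geometrically, and its own invariants show it:
\begin{itemize}
\item Its coefficients have degrees \(\le2,4,6\), and \(\deg\Delta=12\) with a smooth fiber at infinity. This means total Euler number \(12\) and \(\chi=1\), a rational elliptic surface, not \(\chi=2\).
\item At \(k=\pm i\) only the roots \(\pm2h\) collide, while \(4k\equiv\pm4i\ne0\). The fiber there is \(I_2\), consistent with your own \(\ord\Delta=2\), and the full configuration is \(I_4,I_4,I_2,I_2\) with Euler sum \(12\).
\item After \(\xi\mapsto\xi+4k\) one gets \(a_2=8k\), a unit at \(k=\pm i\), so the valuations \(\ord(a_2)=1\), \(\ord(a_4)=2\), \(\ord(a_6)\ge3\) you quote are false. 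The passage from \(\ord\Delta=2\) to ``the additive type with \(\ord\Delta=8\)'' is unsupported.
\end{itemize}
Concretely, on the displayed model \(4k-2h=-2(k-1)^2\) and \(4k+2h=2(k+1)^2\) are both squares in \(\overline{\Q}(k)\), so it has geometric \(4\)-torsion. Your later torsion test works only because you silently return to the roots \(2a,\pm2\) of the correct curve. As written, neither the K3 property nor the \(I_2^*\) fibers is established.

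\textbf{Repair.} Clear denominators with \(x=\xi/h^2\), \(y=\eta/h^3\). This gives
\(\eta^2=(\xi-4kh)(\xi-2h^2)(\xi+2h^2)\) with \(\Delta=2^{12}h^8(k^2-1)^4\), of degree \(24\).
\begin{itemize}
\item At \(k=\pm i\) all three roots vanish, and \(\ord(a_2)=1\), \(\ord(a_4)=4\), \(\ord(a_6)=5\), \(\ord(c_4)=2\), \(\ord\Delta=8\). This gives \(I_2^*\).
\item At \(k=\pm1\) the fibers are \(I_4\).
\end{itemize}
Equivalently, twisting your rational surface back by \(h\) turns its \(I_2\) fibers at the simple zeros of \(h\) into \(I_2^*\).

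With that fix, the rest of your argument is correct: the Shioda--Tate step, the bound \(|F_{\rm tors}|^2\mid4^4\), and the exclusion of \(4\)-torsion because \(2(a\mp1)=\mp2(k\mp1)^2/h\) are nonsquares in \(\overline{\Q}(k)\). It is then a valid alternative to the paper's route.

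The paper instead writes \(F_a\) in Legendre form with \(\lambda=(1+a)/2\). It pulls back the Legendre surface (\(I_2,I_2,I_2^*\)) along the double cover \(k\mapsto(k+1)^2/(2(k^2+1))\), which is ramified at \(k=\pm1\) and unramified over \(\lambda=\infty\). This avoids Tate's algorithm on \(k\)-models altogether. For torsion, the paper uses the injection of torsion into the component group \((\Z/2\Z)^2\) of a single \(I_2^*\) fiber, which caps torsion at four points at once and makes your separate odd-torsion bound and \(4\)-torsion check unnecessary.
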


\begin{proof}
Set
\[
 X=\frac{U+1}{2},\qquad
 Y=\frac V4,\qquad
 \lambda=\frac{1+a}{2}
 =\frac{(k+1)^2}{2(k^2+1)}.
\]
Then
\begin{equation}\label{eq:generic-F-Legendre}
 Y^2=X(X-1)(X-\lambda).
\end{equation}
Thus the required surface is the pullback of the Legendre elliptic
surface under the degree-two map
\[
 \beta:\PP^1_k\longrightarrow\PP^1_\lambda,\qquad
 k\longmapsto\frac{(k+1)^2}{2(k^2+1)}.
\]

The Legendre surface has fibers \(I_2,I_2,I_2^*\) at
\(\lambda=0,1,\infty\).  Indeed,
\[
 \Delta=16\lambda^2(1-\lambda)^2,\qquad
 c_4=16(\lambda^2-\lambda+1),
\]
which gives \(I_2\) at \(0\) and \(1\).  At infinity put
\(t=1/\lambda\) and
\[
 X_1=t^2X,\qquad Y_1=t^3Y.
\]
The local equation becomes
\[
 Y_1^2=X_1^3-t(1+t)X_1^2+t^3X_1,
\]
with
\[
 \ord_t(c_4)=2,\qquad \ord_t(\Delta)=8;
\]
Tate's algorithm therefore gives \(I_2^*\).

Now
\[
 \frac{d\lambda}{dk}
 =\frac{1-k^2}{(k^2+1)^2}.
\]
Hence \(\beta\) is ramified with index \(2\) at \(k=-1\) and
\(k=1\), lying over \(0\) and \(1\), respectively.  It is unramified
over infinity, whose two inverse images are \(k=i\) and \(k=-i\).
After minimalization, the pulled-back fibers are therefore \(I_4\)
at each of \(k=-1,1\) and \(I_2^*\) at each of \(k=i,-i\), with no
other singular fibers.

Their Euler numbers add up to
\[
 4+4+8+8=24,
\]
so \(\chi(\mathcal O_{\mathcal F})=24/12=2\).  Since the fibration
has a section, it has no multiple fibers.  The canonical bundle
formula for an elliptic surface over \(\PP^1\) gives
\[
 K_{\mathcal F}
 \simeq
 \pi^*\mathcal O_{\PP^1}
 \bigl(\chi(\mathcal O_{\mathcal F})-2\bigr)
 \simeq\mathcal O_{\mathcal F},
\]
and \(q(\mathcal F)=0\).  Hence \(\mathcal F\) is a K3 surface.
These standard elliptic-surface facts, including base change and the
Shioda--Tate formula used below, may be found in
\cite{SchuettShioda}.

Over \(\overline{\Q}\), its trivial lattice is
\[
 U\oplus A_3^{\,2}\oplus D_6^{\,2},
\]
of rank
\[
 2+3+3+6+6=20.
\]
A complex K3 surface has Picard number at most \(20\).  Therefore
\(\rho(\mathcal F)=20\), and the Shioda--Tate formula gives
\[
 \rank F_a(K_{\mathrm{geom}})
 =\rho(\mathcal F)-20=0.
\]

It remains to determine geometric torsion.  Since the geometric rank
is zero, every point of \(F_a(K_{\mathrm{geom}})\) is torsion.  Choose
the \(I_2^*\)-place \(v=(k-i)\) and pass to
\(K_v\simeq\overline{\Q}((t))\).  The N\'eron filtration is
\[
 F_{a,1}(K_v)\subset F_{a,0}(K_v)\subset F_a(K_v),
\]
with
\[
 F_{a,0}(K_v)/F_{a,1}(K_v)
 \simeq\mathbb G_a(\overline{\Q}),\qquad
 F_a(K_v)/F_{a,0}(K_v)\simeq\Phi_v.
\]
Because the residue characteristic is zero, the formal group
\(F_{a,1}(K_v)\) and the displayed additive quotient have no nonzero
torsion.  The standard N\'eron specialization sequence therefore gives
\[
 F_a(K_v)_{\rm tors}\hookrightarrow\Phi_v.
\]
See \cite[Chapter~VII, \S\S2--3]{Silverman}.
For a fiber of type \(I_2^*\), the component group is
\(\Phi_v\simeq(\Z/2\Z)^2\)
\cite[\S\S4,7]{SchuettShioda}.  The embedding
\(F_a(K_{\mathrm{geom}})\hookrightarrow F_a(K_v)\) now shows that
every geometric torsion point has order dividing \(2\), and that there
are at most four of them.
The three roots of the cubic already give the full group of four
\(2\)-torsion points displayed in \eqref{eq:generic-F-group}.  This
proves both equalities there.
\end{proof}

\begin{proposition}[One geometric K3 surface, two elliptic fibrations]
\label{prop:two-k3-fibrations}
Let \(\mathcal S\) be the Kummer K3 surface of
Proposition~\ref{prop:kummer-k3-surface}, and let \(\mathcal F\) be the
extremal K3 surface of
Theorem~\ref{thm:generic-third-quotient}.  Then
\begin{equation}\label{eq:common-transcendental-lattice}
 T(\mathcal S_{\C})\simeq T(\mathcal F_{\C})
 \simeq\langle4\rangle\oplus\langle4\rangle.
\end{equation}
Consequently,
\begin{equation}\label{eq:geometric-k3-isomorphism}
 {\mathcal S_{\overline{\Q}}\simeq
        \mathcal F_{\overline{\Q}}}
\end{equation}
as abstract K3 surfaces.  The two displayed Jacobian elliptic
fibrations are inequivalent: their singular-fiber configurations and
geometric Mordell--Weil groups are, respectively,
\[
 \begin{array}{c|c}
 4I_0^*&\Z^2\oplus(\Z/2\Z)^2\\
 2I_4+2I_2^*&(\Z/2\Z)^2.
 \end{array}
\]
\end{proposition}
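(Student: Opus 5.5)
We compute each transcendental lattice, invoke the Shioda--Inose classification of singular K3 surfaces, and then separate the two fibrations by invariants that do not depend on the isomorphism.

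\emph{The lattice of \(\mathcal S\).} By Proposition~\ref{prop:kummer-k3-surface}, \(\mathcal S\simeq\operatorname{Km}(E_0\times E_0)\) with \(E_0\) having CM by \(\Z[i]\). The transcendental lattice of \(E_0\times E_0\) is the binary form attached to the ideal class of \(\Z[i]\), namely \(\langle2\rangle\oplus\langle2\rangle\). The Kummer construction multiplies transcendental lattices by \(2\), which gives \(T(\mathcal S_\C)\simeq\langle4\rangle\oplus\langle4\rangle\).

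\emph{The lattice of \(\mathcal F\).} Theorem~\ref{thm:generic-third-quotient} shows \(\mathcal F\) is extremal with trivial lattice \(U\oplus A_3^2\oplus D_6^2\) and Mordell--Weil group \((\Z/2\Z)^2\). The standard formula for extremal elliptic K3 surfaces gives
\[
 \abs{\operatorname{disc}NS(\mathcal F)}
 =\frac{\prod\abs{\operatorname{disc}(\text{fiber lattices})}}{\#MW_{\rm tors}^2}
 =\frac{4\cdot4\cdot4\cdot4}{16}=16.
\]
Hence \(T(\mathcal F_\C)\) is a positive definite even binary lattice of discriminant \(16\). The reduced even forms of determinant \(16\) are \(\mathrm{diag}(2,8)\) and \(\mathrm{diag}(4,4)\); the form \(\begin{pmatrix}4&2\\2&5\end{pmatrix}\) is not even.

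\emph{Distinguishing the two candidates.} This is the main obstacle. I would use the discriminant group \(A_{NS}\simeq A_T\) with its quadratic form. For \(\langle4\rangle^2\), \(A_T=(\Z/4)^2\). For \(\langle2\rangle\oplus\langle8\rangle\), \(A_T=\Z/2\times\Z/8\). So it suffices to show that \(NS(\mathcal F)^\vee/NS(\mathcal F)\) has exponent \(4\). This follows by computing it as \(W^\perp/W\), where \(W\) is the isotropic subgroup of \(A_{A_3}^2\oplus A_{D_6}^2=(\Z/4)^2\oplus((\Z/2)^2)^2\) determined by the four \(2\)-torsion sections. Since no element of that ambient group has order \(8\), the quotient has exponent at most \(4\), so \(\Z/2\times\Z/8\) is impossible. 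This yields \eqref{eq:common-transcendental-lattice}.

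As an alternative route, \(\mathcal F\) is a base change of the Legendre family and is known to equal \(\operatorname{Km}(E_i\times E_i)\) in Shioda--Inose's table.

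\emph{Isomorphism and inequivalence.} Shioda--Inose's theorem says a singular K3 surface over \(\C\) is determined up to isomorphism by its transcendental lattice. This gives \(\mathcal S_\C\simeq\mathcal F_\C\). Both surfaces and any isomorphism between them are defined over \(\overline{\Q}\), by rigidity of singular K3 surfaces and spreading out, which gives \eqref{eq:geometric-k3-isomorphism}.

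Inequivalence is immediate: an isomorphism of Jacobian fibrations preserves the singular-fiber types (\(4I_0^*\) versus \(2I_4+2I_2^*\)) and the Mordell--Weil rank. The Mordell--Weil ranks are \(2\) by Proposition~\ref{prop:kummer-k3-surface} and \(0\) by Theorem~\ref{thm:generic-third-quotient}. Those same two results give the tabulated Mordell--Weil groups.
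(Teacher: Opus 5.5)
Your proof is correct. Its overall skeleton matches the paper's: compute both transcendental lattices, apply the Shioda--Inose classification, descend to $\overline{\Q}$, and separate the fibrations by Kodaira types. The difference lies in how the two lattices are identified, and the two routes trade off in opposite directions.

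For $\mathcal S$, you quote the CM (Shioda--Mitani) description of $T(E_0\times E_0)$. The paper instead derives it: it takes the two factor curves, the diagonal and the graph of $[i]$, and finds discriminant $-4$. It excludes index $2$ because an even unimodular lattice of signature $(1,3)$ cannot exist, and concludes by Gauss reduction.

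For $\mathcal F$ the roles reverse. The paper simply reads $\mathrm{diag}(4,4)$ off entry No.~137 of the Shimada--Zhang table and only checks the discriminant $16$. You actually prove it. The two reduced even forms of determinant $16$ are $\mathrm{diag}(2,8)$ and $\mathrm{diag}(4,4)$. The first is excluded because $A_{\operatorname{NS}}$ is a subquotient of $A_{\mathrm{Triv}}=(\Z/4\Z)^2\oplus(\Z/2\Z)^4$, hence has exponent dividing $4$, whereas the discriminant group of $\langle2\rangle\oplus\langle8\rangle$ contains $\Z/8\Z$. This subquotient bound uses only $\mathrm{Triv}\subset\operatorname{NS}\subset\operatorname{NS}^\vee\subset\mathrm{Triv}^\vee$, with no knowledge of the isotropic subgroup $W$. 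Your route therefore removes the dependence on an external table.

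One statement should be corrected. Shioda--Inose classify singular K3 surfaces by the \emph{oriented} transcendental lattice. Surfaces whose lattices are only improperly equivalent are in general complex conjugate rather than isomorphic. Your conclusion survives because $\langle4\rangle\oplus\langle4\rangle$ has the orientation-reversing isometry $(x,y)\mapsto(x,-y)$, which is exactly the remark the paper adds.

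Your spreading-out descent is equivalent to the paper's Isom-scheme argument. Your aside about a ``Shioda--Inose table'' for the Legendre base change is unsupported, but it is also unnecessary.
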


\begin{proof}
By Proposition~\ref{prop:kummer-k3-surface},
\[
 \mathcal S\simeq_{\Q}\operatorname{Km}(A),\qquad
 A=E_0\times E_0,\qquad E_0:y^2=x^3+x.
\]
On \(A_{\C}\), consider the two factor curves, the diagonal
\(\Gamma_1\), and the graph \(\Gamma_i\) of complex multiplication by
\(i\).  Their intersection matrix is
\[
 \begin{pmatrix}
 0&1&1&1\\
 1&0&1&1\\
 1&1&0&2\\
 1&1&2&0
 \end{pmatrix},
\]
whose determinant is \(-4\); here
\(\Gamma_1\cdot\Gamma_i=\deg(1-i)=2\).
Since
\[
 \rho(A_{\C})
 =2+\rank_{\Z}\operatorname{End}_{\C}(E_0)=4,
\]
these four classes have full rank.  Let \(L\) be the lattice that they
generate.  Since
\[
 \operatorname{disc}L
 =[\operatorname{NS}(A_{\C}):L]^2
   \operatorname{disc}\operatorname{NS}(A_{\C})
\]
and \(\operatorname{disc}L=-4\), the index is \(1\) or \(2\).  In the
second case \(\operatorname{NS}(A_{\C})\) would be unimodular.  It is
even, since adjunction on the abelian surface gives
\(D^2=2p_a(D)-2\), and has signature \((1,3)\).  This is impossible
because the signature difference of an even unimodular lattice is
divisible by \(8\).  Hence \(L=\operatorname{NS}(A_{\C})\) and
\(\operatorname{disc}\operatorname{NS}(A_{\C})=-4\).
Now \(H^2(A_{\C},\Z)\simeq U^{\oplus3}\) is even and unimodular, and
\(\operatorname{NS}(A_{\C})\) is primitive in it.  Therefore
\(T(A_{\C})\) is an even positive-definite lattice of rank \(2\) and
determinant \(4\).  By Gauss reduction, the only such lattice is
\(\langle2\rangle\oplus\langle2\rangle\).  Thus
\[
 T(A_{\C})\simeq\langle2\rangle\oplus\langle2\rangle.
\]
The Kummer relation
\[
 T(\operatorname{Km}(A)_{\C})\simeq T(A_{\C})(2)
\]
is an isomorphism of integral Hodge lattices
\cite[\S\S11--12]{SchuettLectures}; here \(L(2)\) means that the
bilinear form of \(L\) is multiplied by \(2\).  It now gives
\[
 T(\mathcal S_{\C})\simeq
 \langle4\rangle\oplus\langle4\rangle
\]
as claimed.

For \(\mathcal F\), the reducible-fiber root lattice is
\[
 2A_3+2D_6.
\]
Entry No.~137 of the extremal elliptic K3 classification of Shimada
and Zhang \cite[Table~2, No.~137]{ShimadaZhang} records for this
configuration the geometric Mordell--Weil
group \((\Z/2\Z)^2\) and the transcendental Gram matrix
\[
 \begin{pmatrix}4&0\\0&4\end{pmatrix}.
\]
This proves \eqref{eq:common-transcendental-lattice}.  The
discriminant check is
\[
 \left|\operatorname{disc}\operatorname{NS}(\mathcal F_{\C})\right|
 =\frac{4^2\,4^2}{\left|(\Z/2\Z)^2\right|^2}=16.
\]

Singular complex K3 surfaces---that is, K3 surfaces of Picard number
\(20\)---are classified by their naturally oriented positive-definite
even transcendental lattices
\cite{ShimadaZhang,ShiodaInose}.  The lattice
\(\langle4\rangle\oplus\langle4\rangle\) has an
orientation-reversing isometry, so no orientation ambiguity remains.
Thus \(\mathcal S_{\C}\simeq\mathcal F_{\C}\).  Let
\[
 I=\underline{\operatorname{Isom}}_{\overline{\Q}}
   (\mathcal S_{\overline{\Q}},\mathcal F_{\overline{\Q}}).
\]
For projective varieties, \(I\) is represented by a scheme locally
of finite type over \(\overline{\Q}\), and its formation
commutes with base change.  Since \(I_{\C}\ne\varnothing\), faithful
flatness of \(\C/\overline{\Q}\) gives \(I\ne\varnothing\); because
\(\overline{\Q}\) is
algebraically closed, \(I\) has an \(\overline{\Q}\)-point.
This proves \eqref{eq:geometric-k3-isomorphism}.

Finally, an isomorphism intertwining the two elliptic fibrations, even
after an automorphism of the base, would preserve the multiset of
Kodaira fiber types.  The configurations \(4I_0^*\) and
\(2I_4+2I_2^*\) differ, so no such isomorphism exists.
\end{proof}

\begin{corollary}\label{cor:common-k3-ns}
Both underlying geometric K3 surfaces satisfy
\[
 \rho(\mathcal S_{\overline{\Q}})
 =\rho(\mathcal F_{\overline{\Q}})=20,\qquad
 \left|\operatorname{disc}
 \operatorname{NS}(\mathcal S_{\overline{\Q}})\right|
 =
 \left|\operatorname{disc}
 \operatorname{NS}(\mathcal F_{\overline{\Q}})\right|=16.
\]
\end{corollary}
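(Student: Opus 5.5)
The corollary follows from Proposition~\ref{prop:two-k3-fibrations} together with the lattice theory of K3 surfaces. I will treat the Picard number first and then the discriminant.

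\emph{Picard number.} Proposition~\ref{prop:kummer-k3-surface} already gives $\rho(\mathcal S_{\overline{\Q}})=20$ by the Shioda--Tate count $2+4\cdot4+2$. The proof of Theorem~\ref{thm:generic-third-quotient} gives $\rho(\mathcal F)=20$ from the rank-$20$ trivial lattice $U\oplus A_3^{2}\oplus D_6^{2}$. Both counts are already made over $\overline{\Q}$. Passing from $\overline{\Q}$ to $\C$ does not change the Néron--Severi group of a variety over an algebraically closed field of characteristic zero, so the $\C$-statements transfer. Equality of the two Picard numbers is also immediate from the geometric isomorphism \eqref{eq:geometric-k3-isomorphism}.

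\emph{Discriminant.} For a complex K3 surface $X$, $H^2(X,\Z)$ is even unimodular, and $\operatorname{NS}(X)$ is a primitive sublattice whose orthogonal complement is $T(X)$. Hence their discriminant groups are anti-isometric, and in particular
\[
 \left|\operatorname{disc}\operatorname{NS}(X)\right|=\left|\operatorname{disc}T(X)\right|.
\]
By \eqref{eq:common-transcendental-lattice}, both transcendental lattices are $\langle4\rangle\oplus\langle4\rangle$, which has determinant $16$. Using the identification $\operatorname{NS}(X_{\overline{\Q}})=\operatorname{NS}(X_{\C})$ once more, this gives the displayed value $16$ for both surfaces.

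For $\mathcal F$ there is an independent check via the elliptic-lattice formula: $\left|\operatorname{disc}\operatorname{NS}\right|=\prod\operatorname{disc}(\text{root lattices})\cdot\det(\text{MW lattice})/|\text{torsion}|^2$. Here this is $4^2\cdot4^2/4^2=16$, matching the computation at the end of the proof of Proposition~\ref{prop:two-k3-fibrations}. For $\mathcal S$ one could check in the same way using $\operatorname{disc}D_4=4$ and the height pairing on $\Z Q_k\oplus\Z R_k$, but that would require computing the heights. The transcendental-lattice route avoids this.

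The only point needing care is this invariance of the Néron--Severi lattice under extension of algebraically closed fields, and it is standard. Otherwise there is no real obstacle; everything reduces to results already proved.
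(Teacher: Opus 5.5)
Your proposal is correct and matches the paper's argument. You obtain the discriminant exactly as the paper does: the orders of the discriminant groups of $\operatorname{NS}$ and $T$ agree inside the even unimodular $H^2$, $\det(\langle4\rangle\oplus\langle4\rangle)=16$, and $\operatorname{NS}$ is unchanged under $\overline{\Q}\subset\C$. The only difference is cosmetic: you take $\rho=20$ from the earlier Shioda--Tate counts, whereas the paper reads it off as $22-\operatorname{rank}T=20$, and both are valid.
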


\begin{proof}
Fix an embedding \(\overline{\Q}\hookrightarrow\C\).  Picard number and
the N\'eron--Severi intersection lattice are unchanged by this
algebraically closed extension.  For either
\(X=\mathcal S\) or \(X=\mathcal F\), the lattice
\(H^2(X_{\C},\Z)\) is even unimodular of rank \(22\), and
\(\operatorname{NS}(X_{\C})\) is primitive with orthogonal complement
\(T(X_{\C})\).  Equation~\eqref{eq:common-transcendental-lattice}
therefore gives
\[
 \rho(X_{\overline{\Q}})=22-2=20.
\]
The discriminant groups of \(\operatorname{NS}(X_{\C})\) and
\(T(X_{\C})\) have the same order.  Since
\(\det T(X_{\C})=16\), the asserted N\'eron--Severi discriminants
follow.
\end{proof}

\begin{corollary}[Generic Jacobian ranks]
\label{cor:generic-jacobian-ranks}
Let \(C_{\rm gen}=C_{a(k),b(k)}\) and
\(J_{\rm gen}=\Jac(C_{\rm gen})\).  Then
\[
 {\rank J_{\rm gen}(K_0)=2},\qquad
 {\rank J_{\rm gen}(K_{\mathrm{geom}})=4}.
\]
\end{corollary}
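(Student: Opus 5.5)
The plan is to read both ranks off the Jacobian decomposition of Theorem~\ref{thm:jacobian-decomposition}, applied over the function field. The Kani--Rosen isogeny is built from the three quotient maps of Theorem~\ref{thm:three-elliptic-quotients}. These maps are given by formulas with coefficients in \(\Q(a)\subset K_0\), so the isogeny
\[
 J_{\rm gen}\sim E_D^2\times F_a
\]
is defined over \(K_0\). Here the identification of \(E_b\) with \(E_D\) uses Proposition~\ref{prop:elliptic-surface}: the change of variables \(X=h^2x\), \(Y=h^3y\) is defined over \(K_0\). Isogenous abelian varieties have equal Mordell--Weil rank over every field over which the isogeny is defined. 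The isogeny therefore persists after the constant-field extension to \(K_{\mathrm{geom}}\), and we get
\[
 \rank J_{\rm gen}(K)=2\rank E_D(K)+\rank F_a(K)
\]
for \(K=K_0\) and for \(K=K_{\mathrm{geom}}\).

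For the arithmetic rank, I will invoke Theorem~\ref{thm:generic-mordell-weil}, which gives \(\rank E_D(K_0)=1\). Equation \eqref{eq:generic-F-group} of Theorem~\ref{thm:generic-third-quotient} gives \(\rank F_a(K_0)=0\). The sum is \(2\cdot1+0=2\).

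For the geometric rank, Remark~\ref{rem:geometric-generic-rank} (equivalently Proposition~\ref{prop:kummer-k3-surface}) gives \(\rank E_D(K_{\mathrm{geom}})=2\). The same theorem gives \(\rank F_a(K_{\mathrm{geom}})=0\). The sum is \(2\cdot2+0=4\).

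The only point needing care is that the isogeny really is defined over \(K_0\) in the generic setting, and not merely fiberwise over \(\Q\). I would settle this by noting that the proof of Theorem~\ref{thm:jacobian-decomposition} is purely algebraic in \(a,b\). The group \(G\), the quotient maps, and the birational maps \(\mathcal Q_b\to E_b\) (with rational points at infinity as origins) all have coefficients in \(\Q(a,b)=K_0\). Hence the norm-map homomorphism \(J_{\rm gen}\to E_D\times F_a\times E_D\) is a \(K_0\)-morphism with finite kernel. The differential computation shows this just as it does over \(\Q\).
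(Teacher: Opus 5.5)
Your proposal is correct and follows the paper's proof: you apply isogeny invariance of Mordell--Weil rank to \(J_{\rm gen}\sim E_b^2\times F_a\), take the ranks \(1\) and \(2\) of \(E_b\) over \(K_0\) and \(K_{\mathrm{geom}}\) from Theorem~\ref{thm:generic-mordell-weil} and Remark~\ref{rem:geometric-generic-rank}, and take rank \(0\) for \(F_a\) from Theorem~\ref{thm:generic-third-quotient}. Your additional check that the quotient maps and the resulting isogeny are defined over \(\Q(a,b)=K_0\) makes explicit a point the paper leaves implicit, since Theorem~\ref{thm:jacobian-decomposition} is stated only for fixed rational \(a,b\).
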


\begin{proof}
The isogeny
\[
 J_{\rm gen}\sim E_b^2\times F_a
\]
preserves Mordell--Weil rank.  The two-square quotient \(E_b\) has
rank \(1\) over \(K_0\) by
Theorem~\ref{thm:generic-mordell-weil}, and rank \(2\) over
\(K_{\mathrm{geom}}\) by Remark~\ref{rem:geometric-generic-rank}.
Theorem~\ref{thm:generic-third-quotient} gives geometric rank zero for
\(F_a\).  The two displayed ranks follow.
\end{proof}

\begin{proposition}[Generic endomorphisms and N\'eron--Severi ranks]
\label{prop:generic-endomorphism-ns}
Let \(J_{\rm gen}=\Jac(C_{\rm gen})\), and put
\[
 \rho_{K_0}(J_{\rm gen})
 =
 \dim_{\Q}
 \left(
  \operatorname{NS}((J_{\rm gen})_{\overline{K_0}})
  \otimes_{\Z}\Q
 \right)^{G_{K_0}},
\qquad
 G_{K_0}=\operatorname{Gal}(\overline{K_0}/K_0).
\]
Write \(\rho_{\overline{K_0}}(J_{\rm gen})\) for the corresponding
dimension without taking Galois invariants.
Then
\begin{align}
 \operatorname{End}^0_{K_0}(J_{\rm gen})
 &\simeq M_2(\Q)\times\Q,
 \label{eq:generic-end-K}\\
 \operatorname{End}^0_{\overline{K_0}}(J_{\rm gen})
 &\simeq M_2(\Q(i))\times\Q,
 \label{eq:generic-end-Kbar}
\end{align}
and
\begin{equation}\label{eq:generic-ns-ranks}
 {\rho_{K_0}(J_{\rm gen})=4,\qquad
 \rho_{\overline{K_0}}(J_{\rm gen})=5.}
\end{equation}
\end{proposition}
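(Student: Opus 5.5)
Since End^0 and NS rank are isogeny invariants, I replace J_gen by the product E_b^2 × F_a from Theorem~\ref{thm:jacobian-decomposition}; the isogeny is defined over K_0. For a product of elliptic curves, End^0 decomposes into blocks Hom^0(E_i,E_j). The structure of these blocks is governed by which factors are isogenous, over which field, and by the endomorphism algebras of the factors. So the plan has four steps: determine End^0 of E_b over K_0 and over \overline{K_0}; show F_a has no CM; show E_b and F_a are not isogenous even geometrically; then read off the NS ranks.

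\emph{Step 1 (E_b).} E_b is a quadratic twist of the constant curve E_0. Over \overline{K_0} it therefore becomes isomorphic to E_0, and End^0_{\overline{K_0}}(E_b)=\Q(i). Over K_0 the automorphism [i]:(x,y)\mapsto(-x,iy) is not defined, because i\notin K_0. More formally, End_{K_0}(E_b) consists of the Galois-invariant elements of \Z[i]. Galois acts on E_b through the twist, but the twist commutes with [i], so the action on \Z[i] is just complex conjugation, coming from the constants. Hence End^0_{K_0}(E_b)=\Q, and End^0 of E_b^2 is M_2(\Q) over K_0 and M_2(\Q(i)) over \overline{K_0}.

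\emph{Step 2 (F_a).} F_a has nonconstant j-invariant j=64(a^2+3)^3/b^4 by \eqref{eq:F-a-invariants}. An elliptic curve over a function field with nonconstant j has no CM even over the algebraic closure, since the CM j-invariants form a countable set of algebraic numbers. So End^0(F_a)=\Q over both fields.

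\emph{Step 3 (no isogeny E_b\to F_a).} This is the key step. Isogenous curves over \overline{K_0} have j-invariants that are either both constant or both nonconstant, because j is constant exactly on the isotrivial curves and isotriviality is an isogeny invariant. Here j(E_b)=1728 is constant while j(F_a) is not, so Hom(E_b,F_a)=0 even geometrically. Combining Steps 1--3 gives \eqref{eq:generic-end-K} and \eqref{eq:generic-end-Kbar}.

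\emph{Step 4 (NS ranks).} I would use the standard formula for a product of elliptic curves, \rho(\prod E_i)=n+\sum_{i<j}\rank\operatorname{Hom}(E_i,E_j), applied Galois-equivariantly. Over \overline{K_0} the contributions are 3 from the factors, 2 from the pair (E_b,E_b) since Hom is \Z[i], and 0 from the two mixed pairs, giving 5. Over K_0, taking G_{K_0}-invariants on NS\otimes\Q respects this decomposition. The invariants are therefore the three factor classes plus Hom_{K_0}(E_b,E_b)\otimes\Q=\Q, giving 4. Equivalently, one can use \rho=\dim of the Rosati-symmetric part of End^0: for M_2(\Q)\times\Q this is 3+1=4, and for M_2(\Q(i))\times\Q it is 4+1=5.

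The main point needing care is Step 1 over K_0, namely checking that the Galois action on End(E_b)=\Z[i] really is conjugation. The twist cocycle takes values in \{\pm1\}, which is central, so it does not affect the action on endomorphisms; the action therefore comes only from the constants, where it is conjugation, as in the proof of Theorem~\ref{thm:generic-mordell-weil}.
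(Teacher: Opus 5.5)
Your proposal is correct and follows essentially the same route as the paper: you pass to $E_b^2\times F_a$ through the $K_0$-isogeny, use constant versus nonconstant $j$ to rule out complex multiplication on $F_a$ and any isogeny between the two factors, and use that $[i]$ is not defined over $K_0$. The paper counts the Rosati-symmetric part of $\operatorname{End}^0$; your primary Hom-rank formula is equivalent to this, and you also give the Rosati count as an alternative. Your check that the $\pm1$-valued twist cocycle is central, so that Galois acts on $\Z[i]$ by conjugation, is a somewhat more explicit justification of the step the paper handles by citing Silverman.
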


\begin{proof}
The quotient maps give a \(K_0\)-isogeny
\[
 J_{\rm gen}\sim_{K_0} E_b^2\times F_a.
\]
Here \(j(E_b)=1728\), whereas direct substitution in the
\(j\)-invariant of \(F_a\) gives
\begin{equation}\label{eq:generic-F-j}
 j(F_a)=
 64\frac{(3k^4+10k^2+3)^3}
 {(k^2+1)^2(k^2-1)^4},
\end{equation}
which is nonconstant.  Thus \(E_b\) is isotrivial and \(F_a\) is
nonisotrivial.  An elliptic curve in characteristic zero whose
geometric endomorphism algebra is larger than \(\Q\) has complex
multiplication and hence constant \(j\)-invariant.  Therefore
\[
 \operatorname{End}^0_{\overline{K_0}}(F_a)
 =\operatorname{End}^0_{K_0}(F_a)=\Q.
\]
Moreover, isotriviality is preserved under isogeny, so \(E_b\) and
\(F_a\) are not geometrically isogenous and the cross-Hom groups
between them vanish.

Geometrically \(E_b\) is a twist of \(E_0:y^2=x^3+x\), whence
\[
 \operatorname{End}^0_{\overline{K_0}}(E_b)=\Q(i).
\]
The nontrivial CM endomorphism is represented after trivializing the
twist by \((x,y)\mapsto(-x,iy)\).  It is not defined over
\(K_0=\Q(k)\) \cite[Chapter~III, \S9]{Silverman}, and therefore
\[
 \operatorname{End}^0_{K_0}(E_b)=\Q.
\]
The vanishing of the cross-Hom groups and invariance of rational
endomorphism algebras under isogeny now give
\eqref{eq:generic-end-K} and \eqref{eq:generic-end-Kbar}.

Put \(B=E_b^2\times F_a\) and give it the product polarization.
Pullback by an isogeny induces a \(G_{K_0}\)-equivariant isomorphism of
N\'eron--Severi spaces over \(\Q\).  The standard
N\'eron--Severi--Rosati correspondence identifies the
N\'eron--Severi space of \(B\) with the subspace of
\(\operatorname{End}^0(B)\) fixed by the Rosati involution
\cite[\S21]{Mumford}.  Over \(K_0\), Rosati is transpose on
\(M_2(\Q)\); its fixed space consists of symmetric matrices and has
dimension \(3\).  The \(F_a\)-factor contributes one more dimension.
Thus \(\rho_{K_0}(J_{\rm gen})=4\).

Over \(\overline{K_0}\), Rosati is conjugate transpose on
\(M_2(\Q(i))\).  Its fixed space is
\(\operatorname{Herm}_2(\Q(i))\), of dimension \(4\) over \(\Q\).
Together with the \(F_a\)-factor this gives
\(\rho_{\overline{K_0}}(J_{\rm gen})=5\).
\end{proof}

\begin{remark}
The preceding equalities concern the generic Jacobian.  On a
specialized rational fiber, additional complex multiplication or
additional isogenies between the elliptic factors may increase the
arithmetic or geometric N\'eron--Severi rank.  Consequently, the
fiberwise argument in Section~\ref{sec:examples} uses only the
unconditional lower bound \(\rho_{\Q}(J)\ge4\).
\end{remark}

\begin{theorem}[Absence of nontrivial rational sections]
\label{thm:no-nontrivial-generic-sections}
For the smooth projective generic curve
\[
 C_{\rm gen}:\quad q^4=z^4-2az^2+1
\]
over \(K_0=\Q(k)\), one has
\begin{equation}\label{eq:generic-C-points}
 {C_{\rm gen}(K_0)=\mathcal T,}
\end{equation}
where
\[
 \mathcal T=
 \{[0:1:1],[0:-1:1],[1:1:0],[1:-1:0]\}.
\]
Equivalently, the all-square genus-three fibration has no nontrivial
rational section over the \(k\)-line.
\end{theorem}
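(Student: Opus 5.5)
Suppose \(P\in C_{\rm gen}(K_0)\setminus\mathcal T\). The plan is to push \(P\) down to the third quotient \(F_a\), whose Mordell--Weil group over \(K_0\) is the finite group \(\{O,(a,0),(1,0),(-1,0)\}\) by Theorem~\ref{thm:generic-third-quotient}, and then to check that no nontrivial point of \(C_{\rm gen}\) can map into that set.

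By Proposition~\ref{prop:exact-marked-class-count}, whose proof is purely algebraic and valid over any field where \(b\ne0\) and \((z^2-a)^2+b^2\) has no root, \(P\) is affine with \(zq\ne0\). Over \(K_0\) one needs \(z^2-a\ne \pm ib\), which is automatic since \(i\notin K_0\). Apply \(\phi_3\) from Theorem~\ref{thm:three-elliptic-quotients}, with \(U=(q^2+1)/z^2\) and \(V=z(U^2-1)\). Then \(\phi_3(P)\in F_a(K_0)\), and since \(P\) is affine with \(z\ne0\), the point \(\phi_3(P)\) is an affine point of \(F_a\), hence one of \((a,0),(1,0),(-1,0)\). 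In every case \(V=0\) and \(z\ne0\), so \(U=\pm1\); this excludes \(U=a\) unless \(a=\pm1\), which is impossible generically. If \(U=\pm1\), then \(q^2=\pm z^2-1\). The relation \((U^2-1)z^2=2(U-a)\) forces \(2(U-a)=0\), i.e.\ \(U=a\), a contradiction. Equivalently, substitute directly: \(w=q^2=Uz^2-1\) together with \(w^2=z^4-2az^2+1\) gives \(z^2\bigl((U^2-1)z^2-2(U-a)\bigr)=0\), so \(U=\pm1\) forces \(U=a\). Thus every affine point with \(z\ne0\) maps to a non-2-torsion point or to infinity.

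The one spot needing care is the behaviour of \(\phi_3\) at points where the affine formulas degenerate, namely where \(U\) or \(V\) has a pole. With \(z\ne0\) finite, \(U\) is finite, so \(\phi_3(P)\) is affine, and \(V=z(U^2-1)\) is a genuine affine coordinate. The identity \((U^2-1)z^2=2(U-a)\) holds at \(P\) itself, not only generically, since it was derived algebraically from the equation. So \(\phi_3(P)\) is an affine \(K_0\)-point of \(F_a\) with \(V\ne0\) (because \(U\ne\pm1\)). This contradicts \eqref{eq:generic-F-group}.

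Hence \(C_{\rm gen}(K_0)=\mathcal T\). The main input is the torsion and rank computation in Theorem~\ref{thm:generic-third-quotient}. The only delicate point is making sure the quotient-map formulas are evaluated at a point of the correct chart, which the \(zq\ne0\) reduction ensures. Finally, a rational section of the fibration over the \(k\)-line is the same thing as a \(K_0\)-point of the generic fiber, which gives the equivalent formulation.
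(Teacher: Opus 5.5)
Your proposal is correct and follows essentially the same route as the paper. Both arguments reduce to an affine point with \(zq\ne0\) using \(i\notin K_0\), push it to \(F_a(K_0)\) via \(\phi_3\), and use the pointwise relation \((U^2-1)z^2=2(U-a)\) to rule out all four points of \(F_a(K_0)\). The only cosmetic difference is how you exclude \(U=a\): you note that \(V=z(U^2-1)=0\) with \(z\ne0\) forces \(U=\pm1\), whereas the paper observes that \(U=a\) would give \((a^2-1)z^2=-b^2z^2=0\).
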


\begin{proof}
The only \(K_0\)-rational points with \(ZW=0\) are the four displayed
points, because the fourth roots of unity in \(K_0\) are \(1\) and
\(-1\).  There is no \(K_0\)-point with \(Q=0\): in affine coordinates
this would give
\[
 (z^2-a)^2+b^2=0,
\]
which is impossible because \(K_0=\Q(k)\) is formally real and
\(b\ne0\).

Suppose that \(P=(z,q)\) is a point outside \(\mathcal T\).  Then
\(zq\ne0\), so its image under the third quotient is the affine point
\[
 \phi_3(P)=(U,V)\in F_a(K_0),\qquad
 U=\frac{q^2+1}{z^2},\qquad V=z(U^2-1).
\]
Eliminating \(q\) from the defining equation gives
\begin{equation}\label{eq:generic-C-lift-relation}
 {(U^2-1)z^2=2(U-a).}
\end{equation}
By Theorem~\ref{thm:generic-third-quotient}, the \(U\)-coordinate of
\(\phi_3(P)\) must be \(a,1\), or \(-1\), unless the image is \(O\).
The value \(U=a\) would give
\[
 (a^2-1)z^2=-b^2z^2=0,
\]
contrary to \(bz\ne0\).  The values \(U=1\) and \(U=-1\) would give
\(1-a=0\) and \(1+a=0\), respectively, also impossible in \(K_0\).
Finally, \(O\) cannot occur because the formulas for \(\phi_3(P)\)
give an affine image whenever \(z\ne0\).  Thus no point outside
\(\mathcal T\) exists.
\end{proof}

\begin{corollary}[Rank and the classical Chabauty hypothesis]
\label{cor:rank-formula}
Let
\[
 r_b=\rank E_b(\Q),\qquad r_a=\rank F_a(\Q).
\]
Then
\begin{equation}\label{eq:rank-formula}
 {\rank\Jac(C_{a,b})(\Q)=2r_b+r_a.}
\end{equation}
The standard sufficient rank hypothesis for classical
Chabauty--Coleman is
\begin{equation}\label{eq:chabauty-rank-condition}
 {2r_b+r_a<3.}
\end{equation}
For \(k\in\Q\setminus\{0,\pm1\}\), this inequality holds if and only if
\[
 r_b=1,\qquad r_a=0.
\]
\end{corollary}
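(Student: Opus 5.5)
The rank formula \eqref{eq:rank-formula} will follow at once from the $\Q$-isogeny $\Jac(C_{a,b})\sim_{\Q}E_b^2\times F_a$ of Theorem~\ref{thm:jacobian-decomposition}. An isogeny defined over $\Q$ has finite kernel and a quasi-inverse isogeny over $\Q$. It therefore induces a map on Mordell--Weil groups with finite kernel and finite-index image, so the ranks agree. Since $\rank(E_b^2\times F_a)(\Q)=2r_b+r_a$, this gives \eqref{eq:rank-formula}. The classical Chabauty--Coleman hypothesis is that the Mordell--Weil rank be less than the genus. The genus is $3$ by Proposition~\ref{prop:all-square-genus-three}, which yields \eqref{eq:chabauty-rank-condition}.

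For the final equivalence, fix $k\in\Q\setminus\{0,\pm1\}$. Theorem~\ref{thm:uniform-nontorsion} gives $\rank E_k(\Q)\ge1$. Proposition~\ref{prop:elliptic-surface} identifies $E_k$ with $E_b$ over $\Q$ via $X=h^2x$, $Y=h^3y$, so $r_b\ge1$. Then $2r_b+r_a<3$ forces $2r_b\le2$, hence $r_b=1$, and then $r_a<1$, i.e.\ $r_a=0$. Conversely, $r_b=1$ and $r_a=0$ give $2r_b+r_a=2<3$.

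The excluded value $k=0$ should be noted. There $Q_0$ is torsion, so the lower bound $r_b\ge1$ is unavailable; this is why $k=0$ is excluded from the statement. The value $k=0$ corresponds to $(a,b)=(0,-1)$, equivalently the omitted point $(0,1)$. No step here is a real obstacle. The only care needed is to justify that a $\Q$-isogeny preserves rank, which uses the existence of a dual isogeny $\psi$ with $\psi\circ\phi=[n]$, together with finiteness of torsion kernels.
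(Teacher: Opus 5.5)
Your proposal is correct and follows the paper's proof essentially step for step. You use rank invariance under the $\Q$-isogeny $\Jac(C_{a,b})\sim_{\Q}E_b^2\times F_a$ together with additivity on products, the genus-$3$ Chabauty bound, and Theorem~\ref{thm:uniform-nontorsion} giving $r_b\ge1$, which forces $r_b=1$ and $r_a=0$; your dual-isogeny justification, the identification $E_k\simeq E_b$ via $X=h^2x$, $Y=h^3y$, and the remark on $k=0$ only make explicit what the paper leaves implicit.
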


\begin{proof}
Mordell--Weil rank is invariant under isogeny and additive on direct
products, proving \eqref{eq:rank-formula}.  The classical Chabauty
hypothesis is that the Jacobian rank be strictly less than the genus,
which is \(3\) here \cite{Chabauty,Coleman}.  For every
\(k\in\mathbf Q\setminus\{0,\pm1\}\),
Theorem~\ref{thm:uniform-nontorsion} gives \(r_b\ge1\), so
\eqref{eq:chabauty-rank-condition} has exactly the stated form.
\end{proof}

\begin{remark}\label{rem:coleman-bound}
If \(p>6\) is a prime of good reduction and the required
Mordell--Weil information has been rigorously verified, Coleman's
bound under \eqref{eq:chabauty-rank-condition} is
\[
 \#C_{a,b}(\Q)\le \#C_{a,b}(\F_p)+4.
\]
Writing \(a_p(E_b)\) and \(a_p(F_a)\) for the Frobenius traces at a
common prime of good reduction, the Jacobian isogeny gives
\[
 \#C_{a,b}(\F_p)
 =p+1-2a_p(E_b)-a_p(F_a).
\]
Thus the same bound may be written
\[
 \#C_{a,b}(\Q)
 \le p+5-2a_p(E_b)-a_p(F_a).
\]
As unpolarized abelian varieties, one has the \(\Q\)-isogeny
\[
 \Prym(C_{a,b}/\mathcal Q_b)\sim_{\Q}E_b\times F_a
\]
and, for the Hasse--Weil \(L\)-functions, the decomposition gives
\[
 L(C_{a,b},s)=L(E_b,s)^2L(F_a,s).
\]
\end{remark}

\subsection{Local obstructions for primitive all-square triangles}

\begin{proposition}\label{prop:all-square-local-obstructions}
Let
\[
 (X^2,Y^2,Z^2)
\]
be a primitive all-square Heron triangle, and let its area be
\(\Delta\in\Z_{>0}\).  Then \(\gcd(X,Y,Z)=1\) and:
\begin{enumerate}
\setlength{\itemsep}{0.45\baselineskip}
\item exactly one of \(X,Y,Z\) is even.  If this root is denoted by
\(X_e\), then
\[
 \ord_2(X_e)=1\Longrightarrow\ord_2(\Delta)=1,
 \qquad
 4\mid X_e\Longrightarrow4\mid\Delta;
\]
\item neither exactly two nor all three of \(X,Y,Z\) are divisible by
\(3\), and, up to permutation,
\[
 \{X^2,Y^2,Z^2\}\pmod9
 \in
 \left\{
 \begin{array}{c}
 \{0,1,1\},\{0,4,4\},\{0,7,7\},\\
 \{1,1,7\},\{1,4,4\},\{4,7,7\}
 \end{array}
 \right\};
\]
\item if exactly one of \(X,Y,Z\) is divisible by \(3\), then
\(9\mid\Delta\), and hence \(18\mid\Delta\);
\item if \(3\nmid XYZ\), then \(3\mid\Delta\) and
\[
 X^2+Y^2+Z^2\equiv0\pmod9,
\]
\item in all cases \(6\mid\Delta\).
\end{enumerate}
\end{proposition}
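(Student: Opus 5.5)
The plan is to work directly with Heron's identity in the factored form of Lemma~\ref{lem:integer-sides-rational-area}. Write
\[
 N=16\Delta^2=(X^2+Y^2+Z^2)(-X^2+Y^2+Z^2)(X^2-Y^2+Z^2)(X^2+Y^2-Z^2),
\]
and read off the \(2\)-adic and \(3\)-adic valuations of the four factors. Throughout, the only arithmetic input is that \(N\) is \(16\) times a perfect square. In particular \(\ord_p N\) must be even, and \(N/p^{\ord_pN}\) must be a square unit modulo \(p\).

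The first claim is quick. Since the triangle is primitive, \(\gcd(X^2,Y^2,Z^2)=1\), hence \(\gcd(X,Y,Z)=1\).

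\textbf{The prime \(2\) (item 1).} The proof of Lemma~\ref{lem:integer-sides-rational-area} shows that the perimeter \(X^2+Y^2+Z^2\) is even. So the number of odd roots is \(0\) or \(2\), and \(0\) is excluded by primitivity; thus exactly one root \(X_e\) is even. Call the two odd roots \(O_1,O_2\) and set
\[
 s=O_1^2+O_2^2\equiv2\pmod 8,\qquad d=O_1^2-O_2^2\equiv0\pmod 8 .
\]
The identity then regroups as
\[
 N=(s^2-X_e^4)(X_e^4-d^2).
\]
The two factors \(s\pm X_e^2\) each have \(\ord_2=1\), since \(s\equiv2\pmod 8\) and \(4\mid X_e^2\). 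For the second bracket, \(\ord_2(d^2)\ge6\). If \(\ord_2X_e=1\), then \(\ord_2(X_e^4)=4\), so \(\ord_2(X_e^4-d^2)=4\). This gives \(\ord_2N=6\) and hence \(\ord_2\Delta=1\). If \(4\mid X_e\), both terms of the bracket have valuation \(\ge6\). Then \(\ord_2N\ge8\), so \(4\mid\Delta\).

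\textbf{The prime \(3\) (items 2--4).} I split by how many roots are divisible by \(3\); all three is impossible by primitivity.

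If exactly two roots, say \(X,Y\), are divisible by \(3\), then modulo \(9\) the four factors are \(Z^2,Z^2,Z^2,-Z^2\). Hence \(N\equiv -Z^8\equiv -1\pmod 3\), which is not a square, so this case is excluded.

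If exactly one root, say \(X\), is divisible by \(3\), use the same regrouping with \(s=Y^2+Z^2\) and \(d=Y^2-Z^2\), so that \(N=(s^2-X^4)(X^4-d^2)\). Here \(s\equiv2\pmod3\) is a unit and \(\ord_3(X^4)\ge4\). Also \(\delta=\ord_3 d\ge1\), because \(Y^2\equiv Z^2\equiv1\pmod 3\).
\begin{itemize}
\item If \(\delta=1\), then \(N\equiv -s^2d^2\pmod{3^4}\) with \(\ord_3N=2\). So \(N/9\equiv-(sd/3)^2\) is a nonsquare modulo \(3\), a contradiction.
\item Hence \(\delta\ge2\), i.e.\ \(Y^2\equiv Z^2\pmod 9\), which yields the patterns \(\{0,1,1\},\{0,4,4\},\{0,7,7\}\). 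It also gives \(\ord_3N\ge4\), so \(9\mid\Delta\).
\end{itemize}
Combining \(9\mid\Delta\) with \(2\mid\Delta\) from item 1 gives \(18\mid\Delta\), proving item 3.

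If \(3\nmid XYZ\), all three squares are \(\equiv1\pmod3\). Then the three factors with one minus sign are \(\equiv1\pmod 3\), so \(\ord_3N=\ord_3(X^2+Y^2+Z^2)\ge1\). Evenness of \(\ord_3N\) forces \(9\mid X^2+Y^2+Z^2\), and then \(3\mid\Delta\). It remains to enumerate unordered triples from \(\{1,4,7\}\) with sum \(\equiv0\pmod9\). These are exactly \(\{1,1,7\},\{1,4,4\},\{4,7,7\}\), which completes the list in item 2 and proves item 4.

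\textbf{Item 5.} This follows by combining \(2\mid\Delta\) from item 1 with \(3\mid\Delta\) from items 3 and 4.

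The main obstacle is the exactly-one-divisible-by-\(3\) case. A bare residue check modulo \(9\) does not suffice there, since \(N\equiv0\pmod 9\) automatically. The valuation argument modulo \(3^4\) above, which splits on \(\ord_3(Y^2-Z^2)\), is what I would rely on to get both the mod-\(9\) pattern and the \(9\mid\Delta\) conclusion.
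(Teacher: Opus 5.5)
Your proof is correct and is essentially the paper's argument: a $2$-adic and $3$-adic valuation analysis of the four Heron factors, using only that $N=16\Delta^2$ is $16$ times a square. The differences are cosmetic. You regroup the factors as $(s^2-X_e^4)(X_e^4-d^2)$, and you take the even perimeter from the proof of Lemma~\ref{lem:integer-sides-rational-area} where the paper notes directly that otherwise all four factors would be odd; in the one-root-divisible-by-$3$ case you reduce modulo $3^4$ where the paper observes $S_2/3\equiv -S_3/3\pmod 3$.
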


\begin{proof}
Primitivity of the side triple gives
\(\gcd(X^2,Y^2,Z^2)=1\), and hence \(\gcd(X,Y,Z)=1\).
Heron's identity is
\[
 16\Delta^2=S_0S_1S_2S_3,
\]
where
\[
\begin{aligned}
 S_0&=X^2+Y^2+Z^2,&
 S_1&=-X^2+Y^2+Z^2,\\
 S_2&=X^2-Y^2+Z^2,&
 S_3&=X^2+Y^2-Z^2.
\end{aligned}
\]
If \(X,Y,Z\) were all odd, or if exactly two were even, all four
\(S_i\) would be odd, which is incompatible with the factor \(16\)
on the left.  They cannot all be even because the triangle is
primitive.  Hence exactly one is even.

Suppose \(X\) is even and \(Y,Z\) are odd.  Then \(S_0,S_1\) are
of exact \(2\)-adic valuation \(1\).  If \(\ord_2(X)=1\), then
\(X^2\equiv4\pmod8\), while the difference of two odd squares is
divisible by \(8\); hence
\[
 \ord_2(S_2)=\ord_2(S_3)=2.
\]
Thus
\[
 4+2\ord_2(\Delta)
 =\ord_2(S_0S_1S_2S_3)=1+1+2+2=6,
\]
so \(\ord_2(\Delta)=1\).  If \(4\mid X\), then
\(\ord_2(S_2),\ord_2(S_3)\ge3\), and the same identity gives
\(\ord_2(\Delta)\ge2\).  In particular \(2\mid\Delta\) in every case.

If exactly two roots, say \(X,Y\), were divisible by \(3\), then
\[
 (S_0,S_1,S_2,S_3)\equiv(1,1,1,-1)\pmod3,
\]
so \(16\Delta^2\equiv-1\pmod3\), impossible.  If exactly one root is
divisible by \(3\), say \(X\), then \(9\mid X^2\) and
\(3\mid S_2,S_3\).  We claim that
\[
 Y^2\equiv Z^2\pmod9.
\]
Otherwise \(S_2/3\) and \(S_3/3\) are nonzero and opposite modulo
\(3\), while \(S_0S_1\equiv1\pmod3\).  Since
\(3\mid S_2,S_3\), Heron's identity first gives \(3\mid\Delta\).
Dividing that identity by \(9\) would then say that a square is
congruent to \(-1\pmod3\), a contradiction.  Thus
\(9\mid S_2,S_3\), whence
\[
 \ord_3(16\Delta^2)\ge4,\qquad 9\mid\Delta.
\]
The possible residues in this case are precisely
\(\{0,1,1\},\{0,4,4\},\{0,7,7\}\).
All three roots cannot be divisible by \(3\), since then all three
side lengths would have a common factor \(9\), contrary to
primitivity.

Finally suppose \(3\nmid XYZ\).  Then all three squares are \(1\)
modulo \(3\).  Thus \(3\mid S_0\), while none of \(S_1,S_2,S_3\) is
divisible by \(3\).  Consequently
\[
 \ord_3(S_0)=\ord_3(16\Delta^2)=2\ord_3(\Delta)
\]
is a positive even number.  Therefore \(9\mid S_0\) and
\(3\mid\Delta\).  The nonzero square classes modulo \(9\) are
\(1,4,7\); enumerating the unordered triples whose sum is \(0\)
 modulo \(9\) gives exactly the three triples in the statement.
 Together with the case of one root divisible by \(3\), this gives
 the six residue triples in the statement.  Since \(2\mid\Delta\) in
 all cases and \(3\mid\Delta\) in both possible \(3\)-adic cases, it
 also proves \(6\mid\Delta\); in the one-divisible-root case the
 stronger conclusion is \(18\mid\Delta\).
\end{proof}

\begin{proposition}[Further local restrictions]
\label{prop:further-all-square-local-obstructions}
Let \((X^2,Y^2,Z^2)\) be a primitive all-square Heron triangle of
area \(\Delta\).
\begin{enumerate}
\item For every prime \(p\equiv3\pmod4\), at most one of \(X,Y,Z\)
is divisible by \(p\).  If \(p\mid X\), then
\[
 Y^2\equiv Z^2\pmod p,\qquad p\mid\Delta.
\]
\item One has
\begin{equation}\label{eq:mod5-root-obstruction}
 {5\mid XYZ.}
\end{equation}
Equivalently, at least one side of the triangle is divisible by
\(25\).  If exactly one of \(X,Y,Z\) is divisible by \(5\), then
\(30\mid\Delta\); if exactly two are divisible by \(5\), then
\(5\nmid\Delta\).
\end{enumerate}
\end{proposition}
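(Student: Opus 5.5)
Both parts are local computations on Heron's identity $16\Delta^2=S_0S_1S_2S_3$ from the proof of Proposition~\ref{prop:all-square-local-obstructions}, with $S_0=X^2+Y^2+Z^2$, $S_1=-X^2+Y^2+Z^2$, $S_2=X^2-Y^2+Z^2$, $S_3=X^2+Y^2-Z^2$. Primitivity gives $\gcd(X,Y,Z)=1$, so no prime divides all three roots. I will reduce the four factors modulo $p$ and decide in each case whether the right side can be $16$ times a square.

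For part~(1), fix a prime $p\equiv3\pmod4$, so that $-1$ is a nonsquare modulo $p$. I will use the rewriting $16\Delta^2=(2YZ)^2-(Y^2+Z^2-X^2)^2$, which is $S_1S_0$ times $S_2S_3$ regrouped as a difference of squares.
\begin{itemize}
\item If $p$ divides two roots, say $X$ and $Y$ but not $Z$, then $16\Delta^2\equiv-Z^4\pmod p$ with $Z^4\not\equiv0$. This makes $-1$ a square modulo $p$, a contradiction.
\item If $p\mid X$ only, then $16\Delta^2\equiv-(Y^2-Z^2)^2\pmod p$. If $Y^2\not\equiv Z^2\pmod p$, this again makes $-1$ a square. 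Hence $Y^2\equiv Z^2\pmod p$.
\item In that case $p$ divides both $S_2=X^2-(Y^2-Z^2)$ and $S_3=X^2+(Y^2-Z^2)$. So $p^2\mid16\Delta^2$, and since $p$ is odd, $p\mid\Delta$.
\end{itemize}

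For part~(2), I work modulo $5$. A square prime to $5$ is $\equiv\pm1\pmod 5$, and the squares modulo $5$ are $0,1,4$. Write $x,y,z\in\{0,\pm1\}$ for the residues of $X^2,Y^2,Z^2$. Since $S_0S_1S_2S_3$ is homogeneous of degree $4$, flipping all three signs does not change it, so it suffices to check a few representatives.

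The key step is excluding $5\nmid XYZ$.
\begin{itemize}
\item $(x,y,z)=(1,1,1)$ gives $(S_0,S_1,S_2,S_3)\equiv(3,1,1,1)$, with product $3$.
\item $(1,1,-1)$ gives $(1,-1,-1,3)$, again with product $3$.
\item Every other sign pattern reduces to one of these two by permutation and the global sign flip.
\end{itemize}
Since $16\equiv1\pmod5$, either case would give $\Delta^2\equiv3\pmod5$, which is impossible. This proves $5\mid XYZ$. The equivalence with "some side divisible by $25$" holds because $25\mid X^2$ exactly when $5\mid X$.

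For the remaining two claims of part~(2):
\begin{itemize}
\item \textbf{Exactly one root divisible by $5$}, say $X$, so $x=0$. If $y=z$, then $S_2\equiv S_3\equiv0$. If $y=-z$, then $S_0\equiv S_1\equiv0$. Either way two factors vanish modulo $5$, so $5\mid\Delta$. Combining with $6\mid\Delta$ from Proposition~\ref{prop:all-square-local-obstructions} gives $30\mid\Delta$.
\item \textbf{Exactly two roots divisible by $5$}, say $X,Y$. Then every $S_i\equiv\pm z\pmod5$, so the product is $\equiv z^4\equiv1$. Hence $5\nmid\Delta$.
\end{itemize}

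The only delicate point is the sign enumeration modulo $5$, in particular making sure the degree-$4$ sign-flip symmetry really reduces every case to the two displayed ones. I will double-check it by listing all eight sign patterns explicitly.
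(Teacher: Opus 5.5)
Your strategy is the paper's: reduce Heron's identity modulo \(p\), use that \(-1\) is a nonsquare for \(p\equiv3\pmod4\), and check residues modulo \(5\). However, the identity you build part~(1) on is false. The expression \((2YZ)^2-(Y^2+Z^2-X^2)^2\) is Heron's formula for a triangle with sides \(X,Y,Z\). It has degree \(4\), while \(S_0S_1S_2S_3\) has degree \(8\); for instance \((X,Y,Z)=(1,1,2)\) gives \(0\) versus \(-192\). The correct regrouping is
\[
16\Delta^2=(2Y^2Z^2)^2-(Y^4+Z^4-X^4)^2 .
\]
In your first bullet this only changes \(-Z^4\) to \(-Z^8\), which is harmless. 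In your second bullet it matters, because the correct reduction is
\[
16\Delta^2\equiv-(Y^4-Z^4)^2=-(Y^2-Z^2)^2(Y^2+Z^2)^2\pmod p .
\]
Nonsquareness of \(-1\) then gives only \(Y^4\equiv Z^4\pmod p\), not \(Y^2\equiv Z^2\). You still have to rule out \(Y^2\equiv-Z^2\pmod p\). This follows because \(p\nmid YZ\), so \((Y/Z)^2\equiv-1\) would make \(-1\) a square modulo \(p\). That is exactly the extra step in the paper's proof. Once it is added, your third bullet (\(p\mid S_2\) and \(p\mid S_3\), hence \(p\mid\Delta\)) is correct.

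Part~(2) is correct apart from a sign slip. When \(5\mid X\) and \(5\mid Y\), the four factors are \(z,z,z,-z\), so the product is \(-z^4\equiv-1\equiv4\pmod5\), not \(1\). Only its nonvanishing matters, so \(5\nmid\Delta\) still follows. The sign-pattern enumeration you plan to double-check is also unnecessary. The product equals \(2X^4Y^4+2X^4Z^4+2Y^4Z^4-X^8-Y^8-Z^8\), which depends only on fourth powers, and these are \(0\) or \(1\) modulo \(5\). So \(5\nmid XYZ\) gives the residue \(3\) at once, as in the paper.
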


\begin{proof}
The expanded Heron identity is
\begin{equation}\label{eq:expanded-square-heron}
\begin{split}
 16\Delta^2={}&
 2X^4Y^4+2X^4Z^4+2Y^4Z^4\\
 &{}-X^8-Y^8-Z^8.
\end{split}
\end{equation}
Suppose \(p\equiv3\pmod4\).  If \(p\) divided, say, \(X\) and \(Y\)
but not \(Z\), then \eqref{eq:expanded-square-heron} would give
\[
 (4\Delta)^2\equiv-Z^8\pmod p,
\]
which is impossible because \(-1\) is a nonsquare modulo \(p\).
Primitivity excludes divisibility of all three roots, proving the
first assertion.  If \(p\mid X\), then \(p\nmid YZ\) and
\[
 (4\Delta)^2\equiv-(Y^4-Z^4)^2\pmod p.
\]
Again the nonsquareness of \(-1\) forces both sides to vanish.  Hence
\(p\mid\Delta\) and \(Y^4\equiv Z^4\pmod p\).  Since
\((Y/Z)^2\) cannot be \(-1\), one obtains
\(Y^2\equiv Z^2\pmod p\).

Modulo \(5\), every fourth power is \(0\) or \(1\).  If
\(5\nmid XYZ\), the right side of
\eqref{eq:expanded-square-heron} is congruent to \(3\), a nonsquare
modulo \(5\), which proves \eqref{eq:mod5-root-obstruction}.
Primitivity rules out divisibility of all three roots.  If exactly one
root is divisible by \(5\), the right side of
\eqref{eq:expanded-square-heron} is \(0\) modulo \(5\), so
\(5\mid\Delta\); Proposition~\ref{prop:all-square-local-obstructions}
also gives \(6\mid\Delta\), hence \(30\mid\Delta\).  If exactly two
roots are divisible by \(5\), the right side is
\(-1\equiv4\pmod5\), so \(5\nmid\Delta\).
\end{proof}

\begin{remark}
Proposition~\ref{prop:all-square-local-obstructions} does not prove
that exactly one of \(X,Y,Z\) is divisible by \(3\).  The case
\(3\nmid XYZ\) remains possible, subject to the stated modulo \(9\)
restriction.
\end{remark}

\subsection{The right and isosceles all-square cases}

\begin{lemma}[Fermat's quartic descent]\label{lem:fermat-quartic}
There are no positive integers \(A,B,C\) satisfying
\[
 A^4+B^4=C^2.
\]
\end{lemma}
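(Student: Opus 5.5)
The plan is Fermat's classical infinite descent. Suppose a solution in positive integers exists, and choose one with \(C\) minimal. First reduce to a primitive solution. If a prime \(p\) divides both \(A\) and \(B\), then \(p^4\mid C^2\), so \(p^2\mid C\). Then \((A/p,B/p,C/p^2)\) is a smaller solution. Hence minimality forces \(\gcd(A,B)=1\). Then \((A^2,B^2,C)\) is a primitive Pythagorean triple. After swapping \(A,B\) if needed, we may take \(A\) odd and \(B\) even, and the classical parametrization gives
\[
 A^2=u^2-v^2,\qquad B^2=2uv,\qquad C=u^2+v^2,
\]
with \(u>v>0\) coprime and of opposite parity.

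Next, apply the parametrization a second time. From \(A^2+v^2=u^2\) with \(A\) odd, reduction modulo \(4\) forces \(v\) even and \(u\) odd. Hence \((A,v,u)\) is again a primitive triple, so
\[
 A=s^2-t^2,\qquad v=2st,\qquad u=s^2+t^2,
\]
with \(s>t>0\) coprime and of opposite parity. Then \(B^2=2uv=4st(s^2+t^2)\). The three factors \(s\), \(t\), and \(s^2+t^2\) are pairwise coprime, and their product is the square \((B/2)^2\). So each factor is a square:
\[
 s=\alpha^2,\qquad t=\beta^2,\qquad s^2+t^2=\gamma^2.
\]
This yields \(\alpha^4+\beta^4=\gamma^2\) with positive integers \(\alpha,\beta,\gamma\).

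Finally, close the descent. We have \(\gamma\le\gamma^2=s^2+t^2=u\le u^2<u^2+v^2=C\), where the last inequality uses \(v>0\). This contradicts the minimality of \(C\).

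The main obstacle is bookkeeping rather than ideas. The points needing care are the parity argument placing \(v\) (not \(u\)) as the even leg in the second parametrization, and the pairwise coprimality of \(s,t,s^2+t^2\). The latter follows from \(\gcd(s,t)=1\): a common prime of \(s\) and \(s^2+t^2\) would divide \(t\). Since this is a textbook result, the paper could equally cite it; I would still include the short descent for completeness.
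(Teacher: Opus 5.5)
Your descent is correct and is the same classical Fermat infinite-descent argument the paper relies on. The paper does not write it out; it simply cites Cohen (Section~6.5.2, Proposition~6.5.3 with \(\varepsilon=1\)), whereas you supply the two Pythagorean parametrizations and the minimality step explicitly.
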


\begin{proof}
This is Fermat's classical infinite-descent theorem; see
\cite[Section~6.5.2, Proposition~6.5.3]{Cohen2007},
with \(\varepsilon=1\).
\end{proof}

\begin{proposition}[The isosceles genus-one bisection]
\label{prop:isosceles-genus-one-bisection}
On the total all-square fibration over the \(k\)-line, the locus
\(q=1\) is the union of the trivial section \(z=0\) and the bisection
\begin{equation}\label{eq:isosceles-bisection}
 \mathcal B:\qquad z^2(k^2+1)=4k.
\end{equation}
The smooth projective model of \(\mathcal B\) is
\[
 E_0:\qquad y^2=x^3+x
\]
under
\begin{equation}\label{eq:isosceles-bisection-map}
 x=k,\qquad y=\frac{z(k^2+1)}2,
 \qquad z=\frac{2y}{x^2+1}.
\end{equation}
Moreover,
\begin{equation}\label{eq:E0-rational-points}
 {E_0(\Q)=\{O,(0,0)\}.}
\end{equation}
Consequently, this bisection has no rational point producing a
nondegenerate all-square triangle.
\end{proposition}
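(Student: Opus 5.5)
The plan has three steps: compute the locus \(q=1\), identify the bisection with \(E_0\), and then use Fermat's quartic descent to determine \(E_0(\Q)\).

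\emph{Step 1: the locus \(q=1\).} Substitute \(q=1\) into \(C_{a,b}\). Then \(1=z^4-2az^2+1\), which gives
\[
 z^2\bigl(z^2-2a\bigr)=0 .
\]
The factor \(z^2=0\) is the trivial section \(z=0\), that is, the boundary points in \(\mathcal T\). The other factor is \(z^2=2a=4k/(k^2+1)\), which is exactly \eqref{eq:isosceles-bisection}. For generic \(k\) this has two solutions \(\pm z\), so it is a bisection over the \(k\)-line.

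\emph{Step 2: the model \(E_0\).} Multiply \eqref{eq:isosceles-bisection} by \((k^2+1)/4\) to get
\[
 \Bigl(\frac{z(k^2+1)}{2}\Bigr)^2=k(k^2+1)=k^3+k .
\]
With \(x=k\) and \(y=z(k^2+1)/2\) this is \(y^2=x^3+x\). The inverse is \(z=2y/(x^2+1)\), so the maps in \eqref{eq:isosceles-bisection-map} are mutually inverse birational maps over \(\Q\). Since \(E_0\) is smooth, it is the smooth projective model of \(\mathcal B\).

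\emph{Step 3: \(E_0(\Q)=\{O,(0,0)\}\).} This is the main step, and I would reduce it to Lemma~\ref{lem:fermat-quartic}. Let \((x,y)\in E_0(\Q)\) be affine with \(y\neq0\), so \(x\neq0\). The torsion points \((\pm i,\ldots)\) are not rational, so \((0,0)\) is the only rational affine point with \(y=0\).

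\begin{itemize}
\item Write \(x=m/n^2\) and \(y=\ell/n^3\) in lowest terms, which is standard for Weierstrass points. The equation becomes \(\ell^2=m(m^2+n^4)\).
\item Since \(\gcd(m,n)=1\), the factors \(m\) and \(m^2+n^4\) are coprime. Because \(m^2+n^4>0\) and the product is a square, we get \(m>0\), \(m=s^2\), and \(m^2+n^4=t^2\).
\item Hence \(s^4+n^4=t^2\) with \(s,n,t\) positive integers, which contradicts Lemma~\ref{lem:fermat-quartic}.
\end{itemize}

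If a descent via 2-isogeny would be cleaner for the negative-\(x\) case, the positivity argument above already excludes it, so this argument is complete. The only care needed is the lowest-terms shape of rational points, including \(n\) possibly even, but coprimality handles all cases.

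\emph{Conclusion.} Under \eqref{eq:isosceles-bisection-map}, the point \(O\) and the point \((0,0)\) correspond to \(k=\infty\) or to \(k=0\) with \(z=0\). These are degenerate parameters (\(b=\pm1\) limit or \(z=0\)) or boundary points, so they give no nondegenerate triangle. The points at infinity of \(\mathcal B\) and the point \(k=0\) must be matched with \(O\) and \((0,0)\) directly, and this is routine. Therefore no rational point on the bisection gives a nondegenerate all-square triangle.
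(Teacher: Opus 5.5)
Your proof is correct and follows the paper's argument essentially step for step: the factorization $z^2(z^2-2a)=0$, the substitution $x=k$, $y=z(k^2+1)/2$, and the determination of $E_0(\mathbf{Q})$ by writing $x=m/n^2$ and using coprimality of $m$ and $m^2+n^4$ (with $m>0$ forced by positivity) to reduce to $s^4+n^4=t^2$, contradicting Fermat's descent. One small correction to your conclusion: $k=0$ and $k=\infty$ give $b=-1$ and $b=1$ respectively (the right-angle fibers), which are not degenerate parameters. The actual reason $O$ and $(0,0)$ contribute nothing is that $z=0$ at both points, so they lie on the boundary, which is exactly what the paper states.
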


\begin{proof}
Substituting \(q=1\) and \(a=2k/(k^2+1)\) into
\[
 q^4=z^4-2az^2+1
\]
gives
\[
 z^2(z^2-2a)=0.
\]
The nontrivial component is \eqref{eq:isosceles-bisection}, and
\eqref{eq:isosceles-bisection-map} gives
\[
 y^2=k(k^2+1)=x^3+x.
\]
The projection to the \(k\)-line is the degree-two \(x\)-map.

It remains to determine \(E_0(\Q)\).  A rational point with \(x<0\)
is impossible over \(\R\).  If \(x>0\), the usual denominator
valuation argument writes it in coprime form as
\[
 x=\frac{A}{C^2},\qquad y=\frac{B}{C^3},
 \qquad A,C>0,\quad\gcd(A,C)=1.
\]
The equation becomes
\[
 B^2=A(A^2+C^4).
\]
The two positive factors on the right are coprime, so
\[
 A=r^2,\qquad A^2+C^4=s^2
\]
for positive integers \(r,s\).  This gives
\[
 r^4+C^4=s^2,
\]
contrary to Lemma~\ref{lem:fermat-quartic}.  Thus
\(x=0\), giving \((0,0)\), besides the point \(O\) at infinity.  Both
lie over the boundary \(z=0\), so neither yields a nondegenerate
triangle.
\end{proof}

\begin{corollary}\label{cor:a-zero-trivial-points}
For \(a=0\) and \(b=\pm1\),
\[
 C_{0,\pm1}(\Q)=\mathcal T.
\]
Thus the fiber with right oriented angle parameter produces no
nondegenerate all-square triangle.
\end{corollary}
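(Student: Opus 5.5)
The plan is to specialize the fiber equation and reduce directly to Lemma~\ref{lem:fermat-quartic}, using Proposition~\ref{prop:exact-marked-class-count} for the boundary points. For \(a=0\) (so \(b=\pm1\), and \eqref{eq:intro-ab} holds with \(b\ne0\)), the projective model \eqref{eq:genus-three-projective} becomes the Fermat quartic
\[
 Q^4=Z^4+W^4 .
\]

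First, I will invoke Proposition~\ref{prop:exact-marked-class-count} for this fiber. It says that the rational points with \(ZW=0\) are exactly the four points of \(\mathcal T\). It also says that there is no rational point with \(Q=0\); this can be checked directly here, since \(Z^4+W^4=0\) forces \(Z=W=0\) over \(\Q\). So it remains to exclude a rational point \([Z:Q:W]\) with \(ZWQ\ne0\).

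For such a point I will clear denominators and take absolute values. This gives positive integers \(A=\abs Z\), \(B=\abs W\) and \(D=\abs Q\) with
\[
 A^4+B^4=D^4=(D^2)^2 .
\]
Putting \(C=D^2\) yields a positive solution of \(A^4+B^4=C^2\), contradicting Lemma~\ref{lem:fermat-quartic}. Hence \(C_{0,\pm1}(\Q)=\mathcal T\).

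For the triangle statement, I will apply the count \eqref{eq:exact-marked-count}, which gives \(N^{\mathrm{marked}}_{0,\pm1}=(4-4)/4=0\). Equivalently, by Theorem~\ref{thm:all-square-criterion}, every nondegenerate all-square class with these parameters would be an affine point with \(z\ne0\), and hence would lie outside \(\mathcal T\).

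There is no real obstacle in this argument. The only points needing care are bookkeeping: checking that the normalization to positive integers is legitimate, which holds because only fourth powers appear, and noting that both signs of \(b\) give the same curve, since the equation depends only on \(a\).
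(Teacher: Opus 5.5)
Your proposal is correct and follows essentially the same route as the paper: specialize to the Fermat quartic \(Q^4=Z^4+W^4\), dispose of the boundary cases \(ZW=0\) (giving \(\mathcal T\)) and \(Q=0\) (giving nothing), and rule out \(ZW\ne0\) by clearing denominators to get \(A^4+B^4=(D^2)^2\), contradicting Lemma~\ref{lem:fermat-quartic}. Your use of \eqref{eq:exact-marked-count} to obtain \(N^{\mathrm{marked}}_{0,\pm1}=0\) simply makes explicit the triangle consequence that the paper leaves implicit.
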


\begin{proof}
The projective equation is
\[
 Q^4=Z^4+W^4.
\]
If \(ZW\ne0\), clearing denominators would give a positive integral
solution of \(A^4+B^4=C^4\), contrary to
Lemma~\ref{lem:fermat-quartic}.  The cases \(Z=0\) and
\(W=0\) give exactly the four points of \(\mathcal T\); the case
\(Q=0\) gives no projective point over \(\Q\).
\end{proof}

\begin{corollary}\label{cor:no-right-isosceles-all-square}
There is no nondegenerate right or isosceles rational Heron triangle
whose three sides are rational squares.
\end{corollary}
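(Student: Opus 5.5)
The plan splits into two cases, right triangles and isosceles triangles. In each case the aim is to reduce to Fermat's quartic descent (Lemma~\ref{lem:fermat-quartic}), or directly to Corollary~\ref{cor:a-zero-trivial-points} and Proposition~\ref{prop:isosceles-genus-one-bisection}.

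\emph{Right case.} Suppose the three sides are \(x^2,y^2,z^2\) with \(x,y,z\in\Q_{>0}\) and the right angle opposite \(z^2\). Pythagoras gives \(x^4+y^4=z^4\). Clearing denominators by a common integer produces positive integers \(A,B,C\) with \(A^4+B^4=(C^2)^2\), which contradicts Lemma~\ref{lem:fermat-quartic}. Equivalently, one can mark the two legs, so that the angle between them gives \(a=0\), \(b=\pm1\). Then Corollary~\ref{cor:a-zero-trivial-points} shows that \(C_{0,\pm1}(\Q)=\mathcal T\), and Proposition~\ref{prop:exact-marked-class-count} shows that \(\mathcal T\) yields no nondegenerate class. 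I would present the direct Fermat argument and note this equivalence.

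\emph{Isosceles case.} Mark the two equal square sides as the first two sides. After normalization the triangle is \((1,1,z^2)\) with \(q=1\). By Theorem~\ref{thm:primitive-isosceles-classification}, the class has the form \((1,1,4k/(k^2+1))\) with \(k\in\Q_{>0}\) and \(k\ne1\). The third side is a square exactly when
\[
 z^2(k^2+1)=4k
\]
has a solution with \(z\in\Q^\times\), that is, when there is a rational point of the bisection \(\mathcal B\) of Proposition~\ref{prop:isosceles-genus-one-bisection} with \(z\neq0\). That proposition identifies \(\mathcal B\) with \(E_0\) and shows \(E_0(\Q)=\{O,(0,0)\}\). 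Both of these points have \(z=0\), so they are degenerate. Hence no such triangle exists.

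One point still needs checking. The omitted parameter value, where the apex angle is a right angle (\((a,b)=(0,1)\), i.e.\ \(k\) at infinity or \(k=0\)), is both right and isosceles, so the right case already covers it. Also, an isosceles all-square triangle necessarily has its two equal sides square, so marking them is legitimate.

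I expect no serious obstacle here. The only care needed is bookkeeping: the marking and normalization must cover every isosceles configuration, including the possibility that the unequal side is the marked one. That possibility is harmless, because we may always choose to mark the equal pair.
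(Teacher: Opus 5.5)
Your argument is correct and coincides with the paper's proof: Fermat's quartic descent applied to \(A^4+B^4=(C^2)^2\) in the right case, and, in the isosceles case, marking the equal sides so that \(q=1\) and using \(E_0(\Q)=\{O,(0,0)\}\) on the bisection \(z^2(k^2+1)=4k\). Two of your side remarks are slightly off, but the main argument does not use them. First, \(a\) is the cosine of the angle between the first marked side and the unmarked side, so \(a=0\) comes from marking a leg and then the hypotenuse; marking both legs gives \(a=1/r\neq0\). Second, in the isosceles case \(r=2a\), so \(a=0\) is a degenerate triangle rather than a right apex angle, and a rational right isosceles triangle cannot exist anyway since \(\sqrt2\notin\Q\).
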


\begin{proof}
For a right triangle, clearing square denominators gives positive
integers satisfying
\[
 A^4+B^4=C^4,
\]
which is a special case of
Lemma~\ref{lem:fermat-quartic}.

For an isosceles all-square triangle, mark the equal square sides and
normalize them to \(1\).  It would then give a nonboundary rational
point on the bisection \(q=1\), contrary to
Proposition~\ref{prop:isosceles-genus-one-bisection}.
\end{proof}

\begin{theorem}[Counting primitive isosceles triangles]
\label{thm:isosceles-counting}
Let \(N_{\rm iso}(X)\) be the number of unmarked primitive isosceles
Heron triangles whose equal sides are integer squares and whose
largest side is at most \(X\).  Then
\[
 {N_{\rm iso}(X)\asymp X^{1/2}.}
\]
Every triangle counted here has exactly two square sides.
\end{theorem}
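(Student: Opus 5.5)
The count rests on Corollary~\ref{cor:two-isosceles-families} together with Theorem~\ref{thm:primitive-isosceles-classification}. Every primitive isosceles triangle with square equal sides has the form \((d^2,d^2,4mn/g)\), where \(d=u^2+v^2\) for a coprime opposite-parity pair \(u>v>0\), and it lies in family \eqref{eq:isosceles-family-one} or \eqref{eq:isosceles-family-two}. The largest side is \(\max(d^2,\text{base})\). In either family the base is at most \(2d^2\): in the first, \(4mn\le 2(m^2+n^2)=2d^2\); in the second, \(4mn/2\le m^2+n^2=2d^2\). Hence the condition ``largest side \(\le X\)'' is sandwiched between \(d^2\le X/2\) and \(d^2\le X\). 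This reduces both bounds to counting pairs \((u,v)\) with \(u^2+v^2\le Y\) for \(Y\asymp X^{1/2}\).

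For the upper bound, each unmarked triangle is determined by its parameter data. The number of coprime pairs with \(u^2+v^2\le \sqrt X\) is \(O(X^{1/2})\), since the lattice points lie in a disk of area \(\pi\sqrt X\). Each pair yields at most two triangles per family, allowing for the leg interchange, so \(N_{\rm iso}(X)\ll X^{1/2}\).

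For the lower bound, I would use family \eqref{eq:isosceles-family-one} alone with \(u^2+v^2\le (X/2)^{1/2}\). The coprime opposite-parity pairs \(u>v>0\) in this quarter-disk number \(\asymp X^{1/2}\): positive density among all lattice points, about \(\frac{2}{\pi^2}\) of the area. Each gives a triangle with largest side \(\le X\). I then need injectivity up to bounded multiplicity. Theorem~\ref{thm:primitive-isosceles-classification} says each class corresponds to a unique \(k>1\), and the primitive representative is unique. From the triangle one recovers \(d^2\) and the base, hence \(r=\text{base}/d^2=4k/(k^2+1)\), which determines \(k>1\). Since \(k=m/n\) with \((m,n)=(u^2-v^2,2uv)\) up to swap, the map \((u,v)\mapsto\) triangle is at most two-to-one. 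This gives \(N_{\rm iso}(X)\gg X^{1/2}\).

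The last assertion follows from Corollary~\ref{cor:no-right-isosceles-all-square}, which forbids an all-square isosceles triangle, so the base is not a square. Nondegeneracy and \(d^2\neq\) base are also needed, so that the triangle has exactly two square sides and not three equal ones; both follow from \(k\ne1\). The main obstacle is the bookkeeping of the injectivity and multiplicity: the interchange of \(m,n\), and the overlap between the two families, which I would bound crudely by a factor of \(4\) rather than resolve exactly.
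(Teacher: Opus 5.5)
Your proposal is correct and follows essentially the paper's route. You reduce to Corollary~\ref{cor:two-isosceles-families}, get the upper bound from lattice points with $u^2+v^2\le X^{1/2}$, get the lower bound from family \eqref{eq:isosceles-family-one} alone with an injectivity check, and cite Corollary~\ref{cor:no-right-isosceles-all-square} for the last claim. The paper instead counts the sector $2v<u<3v$ via $\varphi(v)$ and recovers the hypotenuse and the product of the legs rather than $k$; these differences are cosmetic.

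One small slip: $d^2\neq\text{base}$ does not follow from $k\ne1$. It follows from the irrationality of $2\pm\sqrt3$, and is in any case already implied by the base not being a square.
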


\begin{proof}
By Corollary~\ref{cor:two-isosceles-families}, every such triangle
comes from one of the two displayed families, with
\[
 u>v>0,\qquad \gcd(u,v)=1,\qquad u\not\equiv v\pmod2,
\]
and its equal side is
\[
 D=(u^2+v^2)^2.
\]
If the largest side is at most \(X\), then \(D\le X\), so
\(u,v=O(X^{1/4})\).  The two families therefore give
\[
 N_{\rm iso}(X)\ll X^{1/2}.
\]

For the reverse bound use only
\eqref{eq:isosceles-family-one} and restrict to
\[
 1\le v\le B,\qquad 2v<u<3v.
\]
Writing \(u=2v+j\), the coprime opposite-parity choices number
\(\varphi(v)\) when \(v\) is even and \(\varphi(v)/2\) when \(v>1\)
is odd.  Hence their total number is
\[
 \#\{\text{such pairs}\}
 \gg\sum_{v\le B}\varphi(v)\gg B^2.
\]
For these pairs,
\[
 D<100B^4,
\]
and the third side is less than \(2D\) by the triangle inequality.
Thus the largest side is less than \(200B^4\).

Distinct parameter pairs in this subfamily give distinct unmarked
triangles.  Indeed, the equal side recovers the hypotenuse
\(u^2+v^2\), while one quarter of the third side recovers the product
of the two legs \(u^2-v^2\) and \(2uv\); their unordered pair, and then
the primitive Pythagorean parameter \(u>v\), is unique.  Taking
\(B=\lfloor(X/200)^{1/4}\rfloor\) proves the lower bound.  Finally,
Corollary~\ref{cor:no-right-isosceles-all-square} excludes a third
square side.
\end{proof}

\Needspace{18\baselineskip}
\section{The global all-square surface and conditional sparsity}
\label{sec:global-all-square-surface}

For an all-square triangle with side lengths
\((X^2,Y^2,Z^2)\) and area \(\Delta\), Heron's identity, with
\(\Omega=4\Delta\), gives the weighted double plane
\begin{equation}\label{eq:global-all-square-surface}
\begin{split}
 \mathscr H:\quad \Omega^2={}&
 (X^2+Y^2+Z^2)(-X^2+Y^2+Z^2)\\
 &{}\times(X^2-Y^2+Z^2)(X^2+Y^2-Z^2)
 \subset\PP(1,1,1,4).
\end{split}
\end{equation}
This surface should not be confused with the singular K3 surface
underlying the two elliptic quotient fibrations: \(\mathscr H\) is
the global all-square surface, birational to the total space of the
genus-three fibration, and is of general type.

\begin{proposition}[The global all-square surface]
\label{prop:global-all-square-surface}
Geometrically, the surface \(\mathscr H\) has exactly twelve singular
points, each a Du Val singularity of type \(A_3\).  In particular,
\(\mathscr H\) is a normal Gorenstein surface.  If
\(\rho:\widetilde{\mathscr H}\to\mathscr H\) is its minimal
resolution, then \(\widetilde{\mathscr H}\) is a minimal surface of
general type and
\[
 K_{\widetilde{\mathscr H}}^2=2,\qquad
 p_g(\widetilde{\mathscr H})=3,\qquad
 q(\widetilde{\mathscr H})=0,\qquad
 \chi(\mathcal O_{\widetilde{\mathscr H}})=4,
 \qquad c_2(\widetilde{\mathscr H})=46.
\]
In particular, \(\kappa(\widetilde{\mathscr H})=2\).
\end{proposition}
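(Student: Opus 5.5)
We treat \(\mathscr H\) as the double cover \(\pi:\mathscr H\to\PP^2\) branched along the octic curve
\[
 B:\ S_0S_1S_2S_3=0,\qquad S_0=X^2+Y^2+Z^2,\ S_1=-X^2+Y^2+Z^2,\ S_2=X^2-Y^2+Z^2,\ S_3=X^2+Y^2-Z^2 .
\]
Since the weight \(4\) equals half the degree \(8\), the point \([0:0:0:1]\) does not lie on \(\mathscr H\), and \(\mathscr H\) is an honest double plane. Singular points of \(\mathscr H\) lie exactly over singular points of \(B\), and the local type is read off from the plane curve singularity: a point where the branch curve has local equation \(f=0\) gives the surface singularity \(w^2=f\).

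The first step is to locate \(\operatorname{Sing}B\). Each conic \(S_i\) is smooth, since each is a diagonal form with nonzero coefficients. So the singular points of \(B\) are the pairwise intersections of the conics. For \(i\ne j\) the pencil spanned by \(S_i,S_j\) contains a double line. For example,
\[
 S_0-S_1=2X^2,\qquad S_2-S_3=2(Z^2-Y^2),
\]
and similar relations hold for the other pairs. Hence \(S_i\cap S_j\) consists of two points, each with multiplicity two: the conics are bitangent there. For instance \(S_0\cap S_1=\{X=0,\ Y^2+Z^2=0\}\), giving \([0:1:\pm i]\). This yields \(6\) pairs times \(2\) points, which is \(12\) tacnodes. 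We will check that these twelve points are distinct and that no point lies on three of the conics; both follow from a direct look at the coordinates (they are the points \([0:1:\pm i],[0:1:\pm1]\) and their permutations).

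At a tacnode the local equation is \(y(y-x^2)\), equivalently \(y^2=x^4\) after a change of coordinates. The surface singularity \(w^2=y^2-x^4\) is an \(A_3\) singularity. This gives twelve \(A_3\) points, and with them normality and the Gorenstein property (Du Val singularities are rational double points, hence hypersurface singularities).

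For the invariants we use the standard formulas for a double cover branched along a reduced curve with only simple singularities. The canonical resolution is crepant over the \(A_3\) points, so
\[
 K_{\widetilde{\mathscr H}}=\rho^*\pi^*\bigl(K_{\PP^2}+\tfrac12B\bigr)=\rho^*\pi^*\mathcal O(1).
\]
Hence \(K^2=2\cdot1=2\). Moreover \(K\) is nef and big, so \(\widetilde{\mathscr H}\) is minimal and of general type, with \(\kappa=2\). Next we compute
\[
 p_g=h^0(\PP^2,\mathcal O(1))+h^0(\PP^2,\mathcal O(1-4))=3
\]
and
\[
 q=h^1(\PP^2,\mathcal O(-4))=0.
\]
These use the splitting \(\pi_*\mathcal O=\mathcal O\oplus\mathcal O(-4)\), which stays valid on the resolution because rational double points do not change \(\chi\), \(p_g\) or \(q\). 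This gives \(\chi=1-0+3=4\), and Noether's formula yields \(c_2=12\chi-K^2=46\).

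As an independent check we compute the Euler number topologically. The smooth octic has \(e=-2\cdot21+2=-40\). The curve \(B\) is a union of four smooth conics of Euler number \(2\) each, so \(e(B)=4\cdot2-12=-4\). The double cover then has
\[
 e(\mathscr H)=2\cdot3-(-4)=10,
\]
and each \(A_3\) resolution adds \(3\), giving \(10+36=46\), which agrees with the Noether computation.

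The main obstacle is the singularity analysis: we must prove that the twelve points are exactly the singular points and that each is a genuine \(A_3\) point rather than something worse. We will do this by showing that the two conics meeting at each point share a tangent line but have distinct osculation. This holds because the intersection multiplicity is exactly \(2\) at each of the two points, using \(4=2+2\).
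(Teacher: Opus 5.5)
Your proposal is correct and follows essentially the paper's proof: the pairwise bitangencies of the four smooth conics give twelve distinct tacnodes with local model $w^2=y(y-x^2)$, hence $A_3$ points, and crepancy gives $K_{\widetilde{\mathscr H}}=\widetilde\pi^*\mathcal O_{\PP^2}(1)$ (your double-cover formula $K_{\PP^2}+\tfrac12B$ is the same computation as the paper's weighted adjunction), with $p_g,q$ coming from $\pi_*\mathcal O=\mathcal O\oplus\mathcal O(-4)$ and $c_2$ from Noether; your topological check $e=2\cdot3-(-4)+12\cdot3=46$ is a useful independent confirmation that the paper does not include. One small slip: $S_2-S_3=2(Z^2-Y^2)$ is a line pair, not a double line; the double line in that pencil is $S_2+S_3=2X^2$ (likewise $S_1+S_2=2Z^2$ and $S_1+S_3=2Y^2$).
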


\begin{proof}
Let \(B_0,B_1,B_2,B_3\) be the four quadratic factors on the right of
\eqref{eq:global-all-square-surface}.  Each \(B_j=0\) is a smooth
conic, and their pairwise intersections are
\[
\begin{array}{lll}
 B_0\cap B_1=\{[0:1:\pm i]\},&
 B_0\cap B_2=\{[1:0:\pm i]\},&
 B_0\cap B_3=\{[1:\pm i:0]\},\\
 B_1\cap B_2=\{[1:\pm1:0]\},&
 B_1\cap B_3=\{[1:0:\pm1]\},&
 B_2\cap B_3=\{[0:1:\pm1]\}.
\end{array}
\]
 The twelve points are distinct, there is no triple intersection, and
the two conics at each point have intersection multiplicity \(2\).
In suitable completed local coordinates the branch divisor is
\(y(y-x^2)=0\), so the double cover is
\[
 w^2=y(y-x^2).
 \]
 Setting
\[
 u=2y-x^2+2w,\qquad v=2y-x^2-2w
\]
 gives \(uv=x^4\), the \(A_3\) equation.  The surface avoids the
 singular vertex of \(\PP(1,1,1,4)\), and the branch divisor is smooth
 away from these twelve points.  Thus there are no other singularities.
 Since \(\mathscr H\) lies in the smooth locus of
 \(\PP(1,1,1,4)\), it is a Cartier hypersurface and hence
 Cohen--Macaulay and Gorenstein.  Its singular locus is finite, so
 Serre's criterion implies that \(\mathscr H\) is normal.

 Let \(\pi:\mathscr H\to\PP^2\) be the double-cover map and put
 \(\widetilde\pi=\pi\circ\rho\).  The branch divisor has degree \(8\);
 since \(\mathscr H\) is a degree-eight hypersurface in
 \(\PP(1,1,1,4)\) and avoids the singular vertex, weighted adjunction
 gives
 \[
 \omega_{\mathscr H}
 \simeq\mathcal O_{\mathscr H}(8-1-1-1-4)
 =\mathcal O_{\mathscr H}(1)
 =\pi^*\mathcal O_{\PP^2}(1).
 \]
An \(A_3\) singularity is Du Val, so its minimal resolution is
crepant \cite[III.7]{BHPV}.  Consequently,
\[
 K_{\widetilde{\mathscr H}}
 =\widetilde\pi^*\mathcal O_{\PP^2}(1),\qquad
 K_{\widetilde{\mathscr H}}^2=2.
\]
This canonical divisor is nef and big.  A \((-1)\)-curve would have
intersection \(-1\) with it by adjunction, so none exists.  The
resolution is therefore minimal and has Kodaira dimension \(2\).

The double-cover algebra is
\[
 \pi_*\mathcal O_{\mathscr H}
 =\mathcal O_{\PP^2}\oplus\mathcal O_{\PP^2}(-4).
\]
Rational double points are rational singularities
\cite[III.3]{BHPV}, so the resolution does not change the cohomology
of the structure sheaf.  Serre duality on \(\PP^2\) now gives
\[
 q=h^1(\mathcal O_{\PP^2})+h^1(\mathcal O_{\PP^2}(-4))=0,
 \qquad
 p_g=h^2(\mathcal O_{\PP^2}(-4))
 =h^0(\mathcal O_{\PP^2}(1))=3.
\]
Thus \(\chi(\mathcal O_{\widetilde{\mathscr H}})=1-q+p_g=4\), and
Noether's formula gives \(c_2=12\chi-K^2=46\).
\end{proof}

On the open set \(XZ\ne0\), put
\begin{equation}\label{eq:global-surface-fiber-coordinates}
 q=\frac YX,\qquad z=\frac ZX,\qquad
 a=\frac{X^4+Z^4-Y^4}{2X^2Z^2},\qquad
 b=\frac{\Omega}{2X^2Z^2}.
\end{equation}
The equation of \(\mathscr H\) gives \(a^2+b^2=1\), and direct
substitution gives
\[
 q^4=z^4-2az^2+1.
\]
Conversely, a point of this affine quartic gives
\[
 [X:Y:Z:\Omega]=[1:q:z:2bz^2]\in\mathscr H.
\]
Thus, away from the degenerate boundary, \(\mathscr H\) is birational
to the total genus-three fibration.  Under
\[
 a=\frac{2k}{k^2+1},\qquad b=\frac{k^2-1}{k^2+1},
\]
one has \(k=a/(1-b)\) when \(b\ne1\), with \(b=1\) representing
\(k=\infty\).  For positive square side lengths, at most one triangle
inequality can fail, so \(\Omega\ne0\) forces all three inequalities
to be strict.  After deleting \(XYZ\Omega=0\), the orbits of the
remaining rational points under the finite group generated by the sign
changes of the square roots, the involution
\(\Omega\mapsto-\Omega\), and edge permutations are in bijection with
the unmarked rational similarity classes of nondegenerate all-square
Heron triangles.

\begin{corollary}[Conditional thinness]
\label{cor:conditional-all-square-thinness}
Assume the weak Bombieri--Lang conjecture for
\(\widetilde{\mathscr H}\) over \(\Q\), in the form that
\(\widetilde{\mathscr H}(\Q)\) is not Zariski dense
\cite{Bresciani}.  Then
\[
 \mathcal K_{\rm all}
 =
 \left\{k\in\Q\setminus\{0,\pm1\}:
 C_{a(k),b(k)}(\Q)\setminus\mathcal T\ne\varnothing\right\}
\]
is a thin subset of \(\PP^1(\Q)\).

Under the same conjectural assumption, infinitude of primitive
all-square Heron triangles up to edge permutation would imply the
existence of a geometrically integral nontrivial multisection
\(D/\Q\), of degree at least \(2\), such that its smooth projective
normalization \(D^\nu\) satisfies one of the following:
\begin{enumerate}
\item \(D^\nu\) has genus \(0\) and \(D^\nu(\Q)\ne\varnothing\);
\item \(D^\nu\) has genus \(1\), \(D^\nu(\Q)\ne\varnothing\), and
\(\rank\Jac(D^\nu)(\Q)>0\).
\end{enumerate}
\end{corollary}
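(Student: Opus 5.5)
Under weak Bombieri--Lang, the Zariski closure of \(\widetilde{\mathscr H}(\Q)\) is a proper closed subset. It is therefore a finite union of points and finitely many geometrically integral curves \(D_1,\dots,D_m\) defined over \(\Q\); a component that is not geometrically integral has only finitely many rational points and can be absorbed into the finite part. The plan is to push this structure down to the \(k\)-line. Via \eqref{eq:global-surface-fiber-coordinates} and the birational identification of \(\mathscr H\) with the total genus-three fibration, we have a rational map \(\kappa:\widetilde{\mathscr H}\dashrightarrow\PP^1_k\), \(k=a/(1-b)\). Every \(k\in\mathcal K_{\rm all}\) is the image of a nonboundary rational point, namely a point of \(C_{a(k),b(k)}(\Q)\setminus\mathcal T\) lifted to \(\mathscr H\). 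Away from the finite indeterminacy and boundary loci, this point lies either in the finite set of isolated points or on some \(D_j\). So \(\mathcal K_{\rm all}\) is contained in a finite set union \(\bigcup_j\kappa(D_j(\Q))\).

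Next I sort the curves \(D_j\) by their behaviour under \(\kappa\). If \(\kappa|_{D_j}\) is constant, \(D_j\) is a component of a single fiber, and it contributes at most one value of \(k\). If \(\kappa|_{D_j}\) has degree \(1\), then \(D_j\) is a rational section over \(\Q\); by Theorem~\ref{thm:no-nontrivial-generic-sections}, its generic point lies in \(\mathcal T\), so it is a boundary section and contributes no element of \(\mathcal K_{\rm all}\). If \(\deg\kappa|_{D_j}\ge2\), then \(\kappa(D_j(\Q))\subset\kappa(D_j^\nu(\Q))\) is the image of the rational points of a cover of degree at least \(2\), which is a type~II thin set by definition. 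A finite union of thin sets is thin, and this gives the first assertion.

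For the second assertion, suppose there are infinitely many primitive all-square triangles up to edge permutation. By Proposition~\ref{prop:all-square-primitive-representative} and the orbit bijection stated before the corollary, they give infinitely many nonboundary rational points of \(\mathscr H\) in distinct finite-group orbits. Infinitely many of them must lie on a single \(D_j\). Vertical components are excluded: a vertical \(D_j\) lies in one fiber \(C_{a,b}\), and Corollary~\ref{cor:fixed-fiber-finiteness} makes its rational points finite (the fiber at \(k=\infty\) and the boundary can be handled the same way, or are degenerate). Sections are excluded by Theorem~\ref{thm:no-nontrivial-generic-sections}. Hence \(D=D_j\) is a nontrivial multisection of degree at least \(2\). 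Infinitely many rational points on \(D\) give infinitely many on \(D^\nu\), since normalization is an isomorphism off a finite set. Faltings then forces genus \(0\) or \(1\). In genus \(0\) with a rational point we are in case~1. In genus \(1\), \(D^\nu(\Q)\) is a torsor with a point, isomorphic to \(\Jac(D^\nu)(\Q)\), and being infinite it forces positive rank; this is case~2.

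I expect the main obstacle to be bookkeeping rather than depth. One must check that the points excluded by the rational map (indeterminacy of \(\kappa\), the exceptional curves of the twelve \(A_3\) resolutions, and the loci \(XYZ\Omega=0\)) cannot carry a nonboundary triangle. One must also make sure the degree-\(1\) case is genuinely a \(K_0\)-section of \(C_{\rm gen}\), so that Theorem~\ref{thm:no-nontrivial-generic-sections} applies. I would first verify that every exceptional \(A_3\) curve lies over the twelve branch-intersection points, all of which are boundary points with \(XYZ=0\) or non-real coordinates.
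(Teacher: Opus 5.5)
Your proposal is correct and follows essentially the same route as the paper's proof. Weak Bombieri--Lang leaves finitely many geometrically integral curves. Vertical curves contribute single parameters and, in the second part, are excluded by Faltings on $C_{a,b}$. Degree-one curves are ruled out by Theorem~\ref{thm:no-nontrivial-generic-sections}. The remaining multisections give type~II thin sets and, via Faltings, the genus $0$/$1$ dichotomy.

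The bookkeeping you anticipate is disposed of in the paper by working only on $\widetilde U=\rho^{-1}(\{XYZ\Omega\ne0\})$. This open set avoids all twelve $A_3$ points, since they lie on $XYZ=0$, and on it $k$ is a morphism, so only curves meeting $\widetilde U$ need to be considered.
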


\begin{proof}
Let
\[
 U=\{XYZ\Omega\ne0\}\subset\mathscr H,\qquad
 \widetilde U=\rho^{-1}(U).
\]
All twelve singular points lie on \(XYZ=0\), so \(\rho\) restricts
to an isomorphism \(\widetilde U\simeq U\).
Formula~\eqref{eq:global-surface-fiber-coordinates} defines a morphism
from \(U\) to the affine conic \(a^2+b^2=1\).  Composing with its
smooth projective completion and the standard identification of that
conic with \(\PP^1_k\) gives a morphism
\[
 f:\widetilde U\longrightarrow\PP^1_k.
\]
Let \(\mathcal Z\) be the Zariski closure of \(\widetilde U(\Q)\) in
\(\widetilde{\mathscr H}\).  By weak Bombieri--Lang, \(\mathcal Z\) is
a proper closed subset, so its one-dimensional part is a finite union
of curves.
Write these as irreducible curves over \(\Q\).  If such a curve is not
geometrically integral, each of its \(\Q\)-points lies in the
intersection of its distinct conjugate geometric components, and
there are only finitely many such points.  Thus, after discarding
finitely many points and components not meeting \(\widetilde U\), we
may retain finitely many geometrically integral curves \(D_i/\Q\).

Recall that a subset of \(\PP^1(\Q)\) is thin if it is contained in
\(Z_0(\Q)\) together with finitely many images \(f_i(D_i(\Q))\), where
\(Z_0\subsetneq\PP^1\) is closed and each
\(f_i:D_i\to\PP^1\) is a dominant generically finite morphism of
degree at least \(2\) from a geometrically integral curve over \(\Q\)
\cite[Definition~3.1.1]{SerreGalois}.

A vertical curve contributes only one parameter.  After replacing
each horizontal \(D_i\) by its smooth projective normalization, the
restriction of \(f\) extends to a morphism of curves.  If its degree
were \(1\), the generic point would give a \(K_0\)-rational point of
\(C_{\rm gen}\); because \(D_i\) meets \(\widetilde U\), that point is
not in \(\mathcal T\), contrary to
Theorem~\ref{thm:no-nontrivial-generic-sections}.  Every horizontal
curve contributing to \(\mathcal K_{\rm all}\) therefore has degree
at least \(2\).  The image of its rational points is thin by the
definition of a thin set.  A finite union of these images and the
finitely many vertical parameters is again thin.

Now suppose that there are infinitely many primitive all-square
triangles up to edge permutation.  The finite-orbit correspondence
above gives infinitely many points of
\(U(\Q)=\widetilde U(\Q)\).  Hence one
of the finitely many curves supplied by weak Bombieri--Lang contains
infinitely many rational points.  It cannot be vertical: every
vertical curve meeting \(\widetilde U\) has \(b\ne0\), and its smooth
projective completion is the genus-three curve \(C_{a,b}\), which has
only finitely many rational points by Faltings's theorem.  It is
therefore a multisection, and its normalization
\(D^\nu\) still has infinitely many rational points.  Faltings's
theorem forces \(g(D^\nu)\le1\).  In genus \(0\) there is a rational
point; in genus \(1\), a rational point identifies \(D^\nu\) with its
Jacobian, and infinitude forces positive Mordell--Weil rank.  The
degree-one argument above gives \(\deg(D/\PP^1)\ge2\).
\end{proof}

\begin{remark}
Up to the sign changes of the square roots, the nonboundary isosceles
locus is precisely the rank-zero genus-one bisection of
Proposition~\ref{prop:isosceles-genus-one-bisection} and its
edge-permuted counterparts.
Corollary~\ref{cor:no-right-isosceles-all-square} shows that none of
these components contributes an all-square triangle.  Consequently,
weak Bombieri--Lang for \(\widetilde{\mathscr H}\), together with the
nonexistence of the multisections asked for in Open Problem~1, would
force finiteness in Open Problem~2.  Weak Bombieri--Lang alone yields
only thinness, not finiteness.
\end{remark}

\section{Arithmetic of the recorded all-square fibers}\label{sec:examples}

The following table gives one normalized representative for each
recorded primitive all-square triangle:
{\renewcommand{\arraystretch}{1.15}
\[
\begin{array}{@{}c@{\qquad}c@{}}
\toprule
\text{triangle}&(k,z,q)\\
\midrule
(1853^2,4380^2,4427^2)
&
\left(\dfrac{1009}{9649},
      \dfrac{1853}{4427},
      \dfrac{4380}{4427}\right)
\\
(11789^2,68104^2,68595^2)
&
\left(\dfrac{644437}{2437269},
      \dfrac{11789}{68595},
      \dfrac{68104}{68595}\right)
\\
\bottomrule
\end{array}
\]
}
Their respective areas are \(32918611718880\) and
\(284239560530875680\); these values are also recorded in
OEIS~A318575 \cite{OEISSquareAreas}.
Write \((k_i,z_i,q_i)\) for the entries in row \(i\), and put
\[
\begin{gathered}
 a_i=\frac{2k_i}{k_i^2+1},\qquad
 b_i=\frac{k_i^2-1}{k_i^2+1},\qquad
 v_i=z_i^2-a_i,\\
 R_i=\bigl(2(q_i^2+v_i),\,4q_i(q_i^2+v_i)\bigr)\in E_{b_i}(\Q),\\
 Q_i=\bigl(2(1+a_i),\,4(1+a_i)\bigr)\in E_{b_i}(\Q).
\end{gathered}
\]
Let \((U_i,V_i)=\phi_3(z_i,q_i)\) and
\(P_i=(2U_i,2V_i)\in F'_{a_i}(\Q)\).  Direct substitution verifies the
quartic equation and shows that all the displayed points lie on the
stated curves.

\begin{proposition}[Certified rank jumps on the recorded fibers]
\label{prop:recorded-rank-jumps}
For each \(i=1,2\), the points \(R_i,Q_i\in E_{b_i}(\Q)\) are linearly
independent and \(P_i\in F'_{a_i}(\Q)\) has infinite order.
Consequently, on both recorded all-square fibers,
\[
 \rank E_{b_i}(\Q)\ge2,\qquad
 \rank F'_{a_i}(\Q)\ge1,\qquad
 \rank\Jac(C_{a_i,b_i})(\Q)\ge5.
\]
\end{proposition}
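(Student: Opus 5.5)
The proof will be a certification argument in three layers: infinite order of individual points, linear independence of the pair \(R_i,Q_i\), and transfer of ranks through the Jacobian isogeny.

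\emph{Infinite order of \(P_i\).} I will use a minimal integral model and Nagell--Lutz, as in Theorem~\ref{thm:uniform-nontorsion}. Concretely, scale \(F'_{a_i}\) to an integral short Weierstrass model and compute a small multiple \(mP_i\) (with \(m\le 2\) or \(3\)). If that multiple has nonintegral coordinates, then \(P_i\) has infinite order. A fallback is reduction: choose two primes \(\ell_1,\ell_2\) of good reduction. By injectivity of torsion under reduction for \(\ell\nmid\) the torsion order (Silverman VII.3), the rational torsion subgroup embeds in \(\gcd(\#F'_{a_i}(\F_{\ell_1}),\#F'_{a_i}(\F_{\ell_2}))\). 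Since \(F'_{a_i}\) has full two-torsion, by Mazur the torsion is \((\Z/2)\times(\Z/2n)\) with \(n\le4\). Checking \(P_i\ne O\), \(P_i\) not \(2\)-torsion, and \(8P_i\ne O\) (or a denominator check) then finishes this step.

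\emph{Independence of \(R_i,Q_i\).} First, \(Q_i\) has infinite order by Theorem~\ref{thm:uniform-nontorsion}, since \(k_i\ne0,\pm1\). Next I will rule out a relation \(mR_i=nQ_i+T\) with \(T\) torsion. The plan is to compute the canonical height pairing matrix numerically (N\'eron--Tate heights via local heights, e.g.\ PARI's \texttt{ellheightmatrix}) and show its determinant is nonzero. A rigorous numerical bound on the determinant suffices, since the exact regulator is not needed.

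A cleaner alternative avoids heights: use the \(2\)-descent map \(E_{b}(\Q)/2E_b(\Q)\to(\Q^\times/\Q^{\times2})^2\) attached to the rational \(2\)-torsion point \((0,0)\), namely \((x,y)\mapsto x\bmod\Q^{\times2}\). I would show that the images of \(R_i\), \(Q_i\) and \((0,0)\) (the latter mapped to \(4b^2\) via the standard convention) generate a subgroup of order \(8\) in the combined image of the two isogeny descent maps. Independence modulo torsion modulo \(2\) then gives independence. I expect this to work since \(x(Q_i)=2(1+a_i)\) and \(x(R_i)=2(q_i^2+v_i)\) are explicit rationals whose squarefree parts can be compared. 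If the \(x\)-classes coincide modulo squares, I would fall back on the height computation.

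\emph{Ranks.} Given these facts, \(\rank E_{b_i}(\Q)\ge2\) and \(\rank F'_{a_i}(\Q)\ge1\) follow immediately. Since \(F'_a\) is isomorphic to \(F_a\) via \(x=2U\), the formula \eqref{eq:rank-formula} of Corollary~\ref{cor:rank-formula} then gives \(\rank\Jac(C_{a_i,b_i})(\Q)\ge 2\cdot2+1=5\).

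The main obstacle is rigor in the independence step. The coordinates are huge, and the descent-class argument may fail if the squarefree parts of \(x(R_i)\) and \(x(Q_i)\) coincide up to the image of torsion. In that case the certified height-matrix bound is the real work.
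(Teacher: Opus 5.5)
Your architecture is the paper's: independence of \(R_i,Q_i\), infinite order of \(P_i\), then \(2\cdot2+1=5\) via the isogeny. Your descent ``alternative'' and reduction ``fallback'' are exactly the routes the paper executes. Your primary routes (a certified height-pairing determinant, Nagell--Lutz on small multiples) are valid in principle, but nothing is computed, and for a certification statement the explicit data are the proof.

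The real gap is in the descent route as you sketch it. The map \((x,y)\mapsto x\,\Q^{\times2}\) does not land in \((\Q^\times/\Q^{\times2})^2\); that would need full rational \(2\)-torsion, which \(E_b\) lacks. It is the single-isogeny descent homomorphism \(\alpha_b:E_b(\Q)\to\Q^\times/\Q^{\times2}\), and it sends \((0,0)\) to \(4b^2\in\Q^{\times2}\), the trivial class. So the images of \(R_i,Q_i,(0,0)\) can never generate a group of order \(8\) under it.

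The step you actually need is missing: that \(\alpha_b\) kills all of \(E_{b_i}(\Q)_{\mathrm{tors}}\). This fails exactly when some rational \(P\) has \(2P=(0,0)\). From \(x(2P)=(x^2-4b^2)^2/(4y^2)\), that forces \(x(P)=\pm2b\) with \(\pm b\in\Q^{\times2}\) (same sign). Then \(\alpha_b(P)\) is the nontrivial class of \(2\), and independence of \(\alpha_b\)-images would no longer imply independence modulo torsion. This is not idle, since \(\alpha_{b_2}(R_2)=2\cdot8513\) involves that class. The paper excludes it because \(b_i<0\) and \(\ord_5(-b_i)\) is odd.

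Once torsion is killed, \(\alpha_b\) factors through \(E_{b_i}(\Q)/(2E_{b_i}(\Q)+E_{b_i}(\Q)_{\mathrm{tors}})\). The independent square classes \(85,\ 17\cdot233\) and \(2\cdot8513,\ 5\cdot17\cdot269\) then give independence modulo torsion immediately; the paper phrases this as a \(2\)-adic induction. These exact classes also settle your worry that the squarefree parts might coincide, and make the heavy certified-height computation unnecessary.

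In the reduction fallback, full rational \(2\)-torsion plus Mazur still allows \(\Z/2\times\Z/6\). So checking \(P_i\ne O\), \(2P_i\ne O\), \(8P_i\ne O\) leaves orders \(3\) and \(6\) open; you would need \(24P_i\ne O\). The paper's argument is simpler and needs neither injectivity of torsion under reduction nor a gcd of two group orders. At the good prime \(43\), \(\overline{P_i}\) has exact order \(26\). Since reduction is a homomorphism, a torsion \(P_i\) would have order divisible by \(13\), which Mazur forbids. The final step, \(2r_b+r_a\ge5\) via \(F'_a\simeq F_a\) and the rank formula, is fine as you state it.
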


\begin{proof}
For \(E_b:y^2=x^3+4b^2x\), the descent map associated with the
two-isogeny having kernel \(\{O,(0,0)\}\) is the homomorphism
\(\alpha_b:E_b(\Q)\to\Q^\times/\Q^{\times2}\), with
\(\alpha_b((x,y))=x\,\Q^{\times2}\) for \(x\ne0\) and
\(\alpha_b(O)=\alpha_b((0,0))=1\)
\cite[Chapter~X, Proposition~4.9]{Silverman}.  Exact factorization gives
{\renewcommand{\arraystretch}{1.15}
\[
\begin{array}{c|cc}
i&\alpha_{b_i}(R_i)&\alpha_{b_i}(Q_i)\\
\hline
1&85=5\cdot17&3961=17\cdot233\\
2&17026=2\cdot8513&22865=5\cdot17\cdot269.
\end{array}
\]
}
Writing \(k_i=m_i/n_i\), the identities
\[
\begin{gathered}
 \dfrac{x(R_1)}{85}=\left(\dfrac{5256}{35207}\right)^2,\quad
 \dfrac{x(R_2)}{17026}=\left(\dfrac{372136}{47568615}\right)^2,\\
 2(m_1^2+n_1^2)=3961\cdot218^2,\quad
 2(m_2^2+n_2^2)=22865\cdot23578^2
\end{gathered}
\]
certify the table, since
\(x(Q_i)=2(m_i+n_i)^2/(m_i^2+n_i^2)\).  The two square classes in each
row are linearly independent in \(\Q^\times/\Q^{\times2}\).

The only nonzero rational two-torsion point on \(E_{b_i}\) is
\(T=(0,0)\).  The formula
\(x(2P)=(x(P)^2-4b_i^2)^2/(4y(P)^2)\) shows that rational
four-torsion would force \(b_i\) or \(-b_i\) to be a rational square.
This is impossible because \(b_i<0\), while
\(\ord_5(-b_1)=1\) and \(\ord_5(-b_2)=-1\).  Hence
\(E_{b_i}(\Q)[2^\infty]=\{O,T\}\).  If
\(nR_i+mQ_i=O\), applying \(\alpha_{b_i}\) makes \(n,m\) even.
Writing \(n=2n_1\), \(m=2m_1\) gives
\(S=n_1R_i+m_1Q_i\in\{O,T\}\).  Since \(\alpha_{b_i}(S)=1\),
independence of the two square classes makes \(n_1,m_1\) even.
More generally, suppose that \(2^r\mid n,m\), and put
\[
 S_r=\frac{n}{2^r}R_i+\frac{m}{2^r}Q_i.
\]
Then \(2^rS_r=O\), so
\(S_r\in E_{b_i}(\Q)[2^\infty]=\{O,T\}\).
Hence \(\alpha_{b_i}(S_r)=1\), and independence of the two square
classes implies \(2^{r+1}\mid n,m\).  Induction gives
\(2^r\mid n,m\) for every \(r\), and therefore \(n=m=0\).

Since \(\Delta(F'_{a_i})=2^{12}b_i^4\), exact reduction at \(43\)
gives
{\renewcommand{\arraystretch}{1.0}
\[
\begin{array}{c|c|c|c|c}
i&\overline a_i&\overline P_i&
\Delta(F'_{a_i})\pmod{43}&13\overline P_i\\
\hline
1&35&(5,22)&14&(41,0)\\
2&20&(26,3)&36&(40,0).
\end{array}
\]
}
Thus both curves have good reduction; the last column consists of
nonzero \(2\)-torsion points, and the displayed \(y\)-coordinates of
\(\overline P_i\) are nonzero.  Hence each \(\overline P_i\) has exact
order \(26\).  If \(P_i\) were torsion over \(\Q\), its order would be
divisible by \(13\), contrary to Mazur's torsion theorem
\cite{Mazur}.  Hence \(P_i\) has infinite order.  Finally, the
\(\Q\)-isogeny
\(\Jac(C_{a_i,b_i})\sim_{\Q}E_{b_i}^2\times F'_{a_i}\)
gives the asserted rank bound.
\end{proof}

Thus each specialization has Jacobian rank at least \(5\), hence at
least three more than the generic arithmetic rank \(2\), so the
classical Chabauty condition \eqref{eq:chabauty-rank-condition} fails.
These rank jumps are not used elsewhere.

\paragraph{A quadratic-Chabauty range.}

For a smooth rational fiber put \(J=\Jac(C_{a,b})\).  On \(E_b^2\),
the two factor classes and the diagonal have intersection matrix of
determinant \(2\).  Together with the point class on \(F_a\), their
pullbacks under the \(\Q\)-isogeny \(J\to E_b^2\times F_a\) give four
independent Galois-invariant divisor classes; hence
\(\rho_{\Q}(J):=\rank\operatorname{NS}(J_{\overline{\Q}})
^{\operatorname{Gal}(\overline{\Q}/\Q)}\ge4\).
Hence \(2r_b+r_a<6\) implies the Balakrishnan--Dogra numerical
dimension condition for quadratic Chabauty; its application remains
subject to the standard local and \(p\)-adic-height hypotheses, which
are not asserted here \cite{BalakrishnanDogra}.

\section{Open problems}
\label{sec:questions}

\begingroup
\setlength{\topsep}{0pt}
\setlength{\partopsep}{0pt}
\setlength{\itemsep}{0pt}
\setlength{\parsep}{0pt}
\setlength{\parskip}{0pt}
\begin{enumerate}
\item Apart from the rank-zero isosceles genus-one bisection of
Proposition~\ref{prop:isosceles-genus-one-bisection} and its
edge-permuted counterparts, classify the geometrically integral
multisections of degree at most \(4\) whose normalization is either
rational with a \(\Q\)-point or genus one with a \(\Q\)-point and
positive Jacobian rank.  Do any non-isosceles examples exist, or pass
through a recorded all-square point?
\item Are there finitely or infinitely many unmarked rational
all-square similarity classes?  In particular, does a third primitive
integral all-square Heron triangle exist, and can the case
\(3\nmid XYZ\) in
Proposition~\ref{prop:all-square-local-obstructions} be excluded?
\item For a fixed \(k\in\mathbf Q\setminus\{0,\pm1\}\), are there
infinitely many marked
classes satisfying \eqref{eq:primitive-criterion}?  More generally,
how do \(\rank E_b(\Q)\) and \(\rank F_a(\Q)\) vary with \(k\), and
what are the exact ranks and rational points on the two recorded
all-square fibers?
\end{enumerate}
\endgroup
Theorem~\ref{thm:no-nontrivial-generic-sections} excludes nontrivial
rational sections, but not higher-degree multisections or nontrivial
rational points occurring only on individual fibers;
Corollary~\ref{cor:conditional-all-square-thinness} is conditional and
does not decide Open Problem~2.

\end{document}